\documentclass[a4paper,12pt]{article}
\usepackage{setspace}
\usepackage{fullpage}
\usepackage{mathtools}
\usepackage[utf8]{inputenc}
\usepackage{amsthm,amsmath,amstext, amssymb, xcolor, scrextend, tikz, multirow, enumerate, wasysym, makecell, tabularx, hyperref,  pbox, lipsum,csquotes,  tikz-cd, multicol, calc, accents, stackengine, kpfonts}
\usepackage[all]{xy}
\usepackage{xcolor}
\usepackage{palatino}
\usepackage{cancel}
\usepackage{authblk}

\usepackage{mathtools}

\DeclareSymbolFont{AMSb}{U}{msb}{m}{n}
\DeclareMathSymbol{\N}{\mathbin}{AMSb}{"4E}
\DeclareMathSymbol{\Z}{\mathbin}{AMSb}{"5A}
\DeclareMathSymbol{\R}{\mathbin}{AMSb}{"52}
\DeclareMathSymbol{\Q}{\mathbin}{AMSb}{"51}
\DeclareMathSymbol{\I}{\mathbin}{AMSb}{"49}
\DeclareMathSymbol{\C}{\mathbin}{AMSb}{"43}

\DeclareFontFamily{U}{mathx}{\hyphenchar\font45}
\DeclareFontShape{U}{mathx}{m}{n}{<-> mathx10}{}
\DeclareSymbolFont{mathx}{U}{mathx}{m}{n}
\DeclareMathAccent{\widebar}{0}{mathx}{"73}

\def\lim{\mathop{\rm lim}\nolimits}

\def\Hom{\mathrm{Hom}}

\newcommand{\tn}{\textnormal}
\newcommand{\F}{\mathbb{F}}

\newcommand{\subjclass}[2][2020]{%
  \noindent\textbf{\textit{2010 Mathematics Subject Classification}}%
  \textbf{ (#1)}: #2\par
}

\newcommand{\kb}{\mathfrak{b}}

\newcommand{\kc}{\mathfrak{c}}

\newcommand{\Zzeta}{\mathbb{Z}[\zeta_p]}

\newcommand{\Zzetab}{\mathbb{Z}[\zeta_{p^2}]}

\newcommand{\Zpzeta}{\mathbb{Z}_p[\zeta_p]}

\newcommand{\Zpbzetab}{\mathbb{Z}_p[\zeta_{p^2}]}

\newcommand{\iso}{\cong}

\numberwithin{equation}{section}
\newtheorem{theorem}{Theorem}
\newtheorem{prop}[theorem]{Proposition}
\newtheorem{lemma}[theorem]{Lemma}

\newtheorem{corol}[theorem]{Corollary}

\theoremstyle{remark}
\newtheorem{remark}[theorem]{REMARK}
\theoremstyle{definition}
\newtheorem{defn}[theorem]{Definition}
\newtheorem{example}[theorem]{Example}

\usetikzlibrary{decorations.shapes}
\usetikzlibrary{decorations.text}
\usetikzlibrary{calc}

\hypersetup{colorlinks=true,linkcolor=blue,citecolor=blue, filecolor=blue,urlcolor=blue}

\title{The profinite genus of the groups $\Z^n\rtimes C_{p^2}$}
\author[1]{ Marlon Estanislau}
 \author[2]{John W.\ MacQuarrie}
\author[3]{
Anderson Porto}
\affil[1]{Universidade Federal de Minas Gerais, Belo Horizonte, MG, Brazil}
\affil[2]{Universidade Federal de Minas Gerais, Belo Horizonte, MG, Brazil}

\affil[3]{Universidade Federal dos Vales do Jequitinhonha e Mucuri, Diamantina, MG, Brazil}

\begin{document}
\maketitle
\onehalfspacing
\footnotetext[1]{\textit{Email addresses:} mestanislau@ufmg.br (M.  Estanislau), john@mat.ufmg.br (J.W. MacQuarrie), ander.porto@ict.ufvjm.edu.br (A.L.P.  Porto).}
\footnotetext[2]{This research was undertaken during a postdoctoral project of the third author and a doctoral project of the first author, both under the supervision of the second author at the Federal University of Minas Gerais.}

\begin{abstract}
    A formula is given for the profinite genus of groups of the form 
$\mathbb{Z}^n \rtimes C_{p^2}$, completing the calculation  of the size of the genus  of semidirect products of the form $\mathbb{Z}^n \rtimes G$ where  $G$ is a finite $p$-group  of finite integral representation type.
\end{abstract}

\begin{keywords} {Profinite genus, semidirect products, integral representations, crystallographic groups, Galois actions.} \end{keywords}
\subjclass[2020]{Primary 20E34, 20C05, 20C10, 20H15. }

\section{Introduction}

A group $G$ is residually finite if every element of $G$ different from $1$ is different from $1$ in some finite quotient of $G$.  Recently, a much studied question is to what extent groups in given classes of residually finite groups of a combinatorial nature may be distinguished from one other by their sets of finite quotient groups. The study  started in the 1970's when Baumslag \cite{BAU74},  Stebe \cite{Ste72} and others found examples of distinct finitely generated residually finite groups having the same set of finite quotients. The study lead to the notion of the genus $\mathfrak{g}(G)$ of a residually finite group $G$: the set of isomorphism classes of finitely generated residually finite groups having the same set of finite quotients as those of $G$. Equivalently, $\mathfrak{g}(G)$ is the set of isomorphism classes of finitely generated residually finite groups whose profinite completion is isomorphic to the profinite completion $\widehat G$ of $G$. 
Calculating the exact cardinality $|\mathfrak{g}(G)|$ of the genus (which we will abusively also call the genus) is generally too difficult in practice, so it is common to ask whether it is finite, or $1$ --- groups $G$ for which $|\mathfrak{g}(G)| = 1$ are said to be ``profinitely rigid'' \cite{BCR16,bridson21, Paolini, Sam2, Sam,     BPZ, BPZ2, BZ, GZ,  Paolini2,  popovic}. There are only few papers where the exact value, or estimates, of the genus appear (see \cite{BZG, BPZ, BPZ2, BZ, GZ, Ner19, Ner20, Ner24}).  

One case where precise formulas have been achieved is in work of Grunewald and Zalesskii \cite[Proposition 2.23]{GZ}, where a formula for the genus of semidirect products $\Z^n\rtimes C_p$ is given, using Reiner's   classification \cite{rein0} of the $\Z C_p$-lattices (that is, finitely generated $\Z$-free $\Z C_p$-modules). Key to the success of \cite{GZ} is that $C_p$ has finite integral representation type (meaning there are a finite number of indecomposable $\Z C_p$-lattices).  There is only one other non-trivial finite $p$-group $G$ of finite integral representation type: the cyclic group of order $p^2$.  The characterization of $\Z C_{p^2}$-lattices is much more complicated than the characterization of $\Z C_p$-lattices \cite[Theorems 7.3 and 7.8]{reiner1978}, but it is explicit and hence, as noted in \cite{GZ}, a formula for the genus of groups of the form $\Z^n\rtimes C_{p^2}$ is possible, completing the calculation of the genus for groups $\Z^n\rtimes G$ where $G$ is a $p$-group of finite integral representation type.  We present the formula here.  Our main technical tool may be interesting in its own right: we define natural actions of the Galois groups $G(p^2):=\tn{Gal}(\Q(\zeta_{p^2}):\Q)$ and $G(p) := \tn{Gal}(\Q(\zeta_p):\Q)$ (where $\zeta_n$ denotes a primitive $n$th root of unity) on the set of isomorphism classes of $\Z C_{p^2}$-lattices, which allow us to make precise the relationship between (the number of isomorphism classes of) $\Z C_{p^2}$-lattices of $\Z$-rank $n$ and (the number of isomorphism classes of) groups of the form $\Z^n\rtimes C_{p^2}$.

The groups $\Z^n\rtimes C_{p^2}$ are examples of \emph{crystallographic} (or more precisely, \emph{symmorphic}) groups (see for instance \cite{Sam} for definitions) and thus our investigation is part of the wider study of the genus of crystallographic groups -- see 
\cite{Paolini, Sam, Finken, Paolini2, popovic} and references therein for other work in this area.

In order to state our main result, we require the following notation.  For $i\geqslant 1$, let $\Z[\zeta_{p^i}]$ be the ring of algebraic integers of $\Q[\zeta_{p^i}].$ Let $H(\mathbb{Z}[\zeta_{p^i}])=H(p^i)$ be the ideal  class group of $\mathbb{Z}[\zeta_{p^i}]$ and let $h(\mathbb{Z}[\zeta_{p^i}])=h_i$ be its order (see for instance Chapter 1 of \cite{neukirch} for definitions). The group $G(p^i)$ acts naturally on the ideal class group $H(\mathbb{Z}[\zeta_{p^i}])$ (see Chapter 10 in \cite{wash}). The space of orbits of this action will be denoted by $G(p^i) \backslash \, H(\mathbb{Z}[\zeta_{p^i}])$ and its size will be written as $\left| G(p^i) \backslash \, H(\mathbb{Z}[\zeta_{p^i}]) \right|.$  For the  definition of the space of orbits $ G(p^2) \backslash \, U_t $  see Section \ref{s3}, and for $\Sigma_M$ see Definition \ref{sigma}. 
By ``faithful semidirect product'' we mean a semidirect product $\mathcal{E}=M \rtimes_{\psi_{}} C_{p^2}$, where $M$ is a finitely generated free abelian group and the action of $C_{p^2}$ on $M$ is faithful. The modules of types $(A)-(F)$ and the modules $E(\mathbb{Z}[\zeta_{p}])$, $E(\mathbb{Z}[\zeta_{p^2}])$ are described in Theorem \ref{integral indecom}.
 Our main result is as follows: 

 \begin{theorem} \label{caso geral}
Let $\mathcal{E}=\Z^n \rtimes_{\psi_{}} C_{p^2}$ be a faithful semidirect product. Then $$\left| G(p^2) \backslash \, H(\mathbb{Z}[\zeta_{p^2}])  \right| \leqslant |\mathfrak{g}\left(\mathcal{E}\right)| \leqslant 2 \cdot \left| G(p) \backslash \, H(\mathbb{Z}[\zeta_{p}])  \right| \cdot \left| G(p^2) \backslash \, H(\mathbb{Z}[\zeta_{p^2}])  \right| \cdot \left| G(p^2) \backslash \, U_t  \right|,$$
and $|\mathfrak{g}\left(\mathcal{E}\right)|$ 
can assume the following values: 
\begin{itemize}
\item $\left| G(p^2) \backslash \, H(\mathbb{Z}[\zeta_{p^2}])  \right|$, if $M_{}=\Z^{a} \oplus \displaystyle \bigoplus_{i=1}^{c} \, \mathfrak{c}_i  \oplus \displaystyle \bigoplus_{j=1}^{e} \, E(\mathfrak{c}'_j),$  
with $c+e \geqslant 1$ and $\mathfrak{c}_i,$ $E(\mathfrak{c}'_j)$ in the genus of $\mathbb{Z}[\zeta_{p^2}]$ and $E(\mathbb{Z}[\zeta_{p^2}])$, respectively;

  
  \item  $\left| G(p) \backslash \, H(\mathbb{Z}[\zeta_{p}])  \right| \cdot \left| G(p^2) \backslash \, H(\mathbb{Z}[\zeta_{p^2}])  \right|\Sigma_M$, if $M_{}$  has at least one direct summand in the genus of $\mathbb{Z}[\zeta_{p}]$ or $E(\mathbb{Z}[\zeta_{p}])$,  and at least one direct summand in the genus of $\mathbb{Z}[\zeta_{p^2}]$ or $E(\mathbb{Z}[\zeta_{p^2}])$;
  \item $\left| G(p) \backslash \, H(\mathbb{Z}[\zeta_{p}])  \right| \cdot \left| G(p^2) \backslash \, H(\mathbb{Z}[\zeta_{p^2}])  \right| \cdot \left| G(p^2) \backslash \, U_t  \right|\Sigma_M$, if M has no direct summands of type $(A)$ other than possibly $\Z$, and at least one direct summand of type $(B)-(F)$.
\end{itemize}
\end{theorem}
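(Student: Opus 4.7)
The plan is to translate the problem of counting the genus $|\mathfrak{g}(\mathcal{E},\mathfrak{RF})|$ of the faithful semidirect product $\mathcal{E} = M \rtimes_\psi C_{p^2}$ into the problem of counting isomorphism classes of $\Z C_{p^2}$-lattices of the appropriate $\Z$-rank, modulo the natural Galois group actions of $G(p)$ and $G(p^2)$ described in the introduction. The starting point is that two faithful semidirect products $M_1 \rtimes_{\psi_1} G$ and $M_2 \rtimes_{\psi_2} G$ have isomorphic profinite completion if and only if the profinite completions of the $\Z G$-modules $M_1$ and $M_2$ are isomorphic, and the latter is a statement about the local lattices $\Z_p\otimes M_i$ at the prime $p$. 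Thus the genus of $\mathcal{E}$ is controlled by the lattices in a fixed genus class (in the sense of integral representation theory) of $M$, further quotiented by the group-isomorphism equivalence that identifies semidirect products whose actions differ by an automorphism of $G = C_{p^2}$, which is precisely the action of $G(p^2)$ on the set of $\Z C_{p^2}$-lattices.

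First I would decompose $M$ into indecomposables using Theorem \ref{integral indecom}, writing $M$ as a direct sum of modules of types $(A)$--$(F)$ together with $E$-type summands $E(\mathfrak{c})$ for ideals $\mathfrak{c}$. For each indecomposable type one counts the lattices locally equivalent to it: the genus classes of summands in $\mathrm{genus}(\Z[\zeta_{p^i}])$ are parametrized by the ideal class group $H(p^i)$, and the genus classes of the $E(\mathfrak{c})$-summands are parametrized by the same groups via their ``top'' ideal; the additional choice encoded by $U_t$ appears only when mixed $E$-type summands arise from types $(B)$--$(F)$. Assembling these local counts, and then collapsing by the Galois actions of $G(p)$ and $G(p^2)$ on $H(p)$, $H(p^2)$ and $U_t$ (which is forced by the action of $\mathrm{Aut}(C_{p^2})$ on the semidirect product), yields the candidate count $|G(p)\backslash H(p)|\cdot|G(p^2)\backslash H(p^2)|\cdot|G(p^2)\backslash U_t|\cdot\Sigma_M$, where the factor $\Sigma_M$ of Definition \ref{sigma} accounts for the multiplicities with which each indecomposable type occurs in $M$ (the combinatorics of distributing class-group data across parallel summands of the same type).

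Next I would treat the three cases of the theorem separately. In the first case, $M$ has only summands in the genus of $\Z[\zeta_{p^2}]$ or $E(\Z[\zeta_{p^2}])$; then no $\Z[\zeta_p]$-lattice data appears, the $U_t$-factor is trivial, and $\Sigma_M = 1$, so the count collapses to $|G(p^2)\backslash H(p^2)|$. In the second case, $M$ mixes $\Z[\zeta_p]$-type and $\Z[\zeta_{p^2}]$-type summands (but lacks the special $(B)$--$(F)$ summands that activate $U_t$); the count becomes $|G(p)\backslash H(p)|\cdot|G(p^2)\backslash H(p^2)|\cdot\Sigma_M$. The third case is the generic one, with the full product appearing. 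Each equality is verified by showing directly, via the explicit form of the indecomposables in Theorem \ref{integral indecom}, that the orbits of Galois actions give a bijection with isomorphism classes of faithful semidirect products with the required profinite completion.

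The lower bound $|G(p^2)\backslash H(p^2)| \le |\mathfrak{g}(\mathcal{E},\mathfrak{RF})|$ comes from the fact that for any faithful $M$ there must be at least one summand on which $C_{p^2}$ acts through a primitive character, contributing at least the $H(p^2)$-orbits. The upper bound follows by noting that in the worst case every factor $|G(p)\backslash H(p)|$, $|G(p^2)\backslash H(p^2)|$, $|G(p^2)\backslash U_t|$ and a factor $2$ (accounting for the two possible ``E'' lifts of a given genus representative over $\Z[\zeta_p]$ versus $\Z[\zeta_{p^2}]$) can contribute simultaneously. The main obstacle is the third case, where one must simultaneously handle the failure of Krull--Schmidt for $\Z C_{p^2}$-lattices and the interaction of the two Galois actions on the $E$-type summands; in particular, one must show that the natural map from tuples of ideal classes to isomorphism classes of semidirect products is exactly $|G(p)\backslash H(p)|\cdot|G(p^2)\backslash H(p^2)|\cdot|G(p^2)\backslash U_t|$-to-one, with no further hidden identifications. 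This is where the careful definition of $\Sigma_M$ and the explicit description of the $U_t$-action become essential.
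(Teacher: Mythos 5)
Your overall strategy---reduce to counting isomorphism classes of $\Z C_{p^2}$-lattices in a fixed genus modulo the $G(p^2)$-action, then case-split on which indecomposable types occur---is the same as the paper's, which deduces the theorem from six propositions each built on Proposition \ref{grupos no genero}, Proposition \ref{i ii}, Proposition \ref{i ii profinite} and Reiner's invariants (Theorem \ref{invariant sums}). But there are two concrete gaps. First, you never justify that \emph{every} finitely generated residually finite group with profinite completion $\widehat{\mathcal{E}}$ is itself a split extension $M\rtimes_{\eta}G$; without this (Proposition \ref{grupos no genero}, i.e.\ \cite[Proposition 2.14]{GZ}) the translation to lattices does not even get started, since a priori the genus could contain groups that are not semidirect products. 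Relatedly, the reduction to the local lattice at $p$ must be combined with Lemma \ref{posto e completamentos}: the genus of $M$ is \emph{not} determined by the multiset of integral indecomposable types, because types $(C)$ and $(D)$ become isomorphic $p$-adically (only $\gamma+\delta$ is a genus invariant), and this merging is exactly what produces extra isomorphism classes inside one genus.

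Second, and more seriously, your account of where the factors $\Sigma_M$ and the bound $2$ come from is wrong. You attribute $\Sigma_M$ to ``the combinatorics of distributing class-group data across parallel summands of the same type,'' and the factor $2$ in the upper bound to ``two possible $E$ lifts.'' In fact $\Sigma_M\in\{1,2\}$ and the value $2$ occurs precisely when $p\equiv 1\pmod 4$ and $M$ has a type $(C)$ or $(D)$ summand but no summand of type $\Z$, $E(\mathfrak{b})$, $E(\mathfrak{c})$, $(B)$ or $(F)$: then Reiner's invariant (iv) --- the quadratic character of $u_0(M)$ in $u(\F_p)$ --- is an isomorphism invariant, it is preserved by the $G(p^2)$-action (which acts trivially on $\F_p$), and it splits the genus of $M$ into two Galois-invariant halves (the $n_0$ versus non-$n_0$ parts in the proof of Proposition \ref{mais simples congruo a 1}). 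Similarly the $U_t$ factor is governed by invariant (iii), which is only in force when $M$ has no summand of type $\mathfrak{b}$, $E(\mathfrak{b})$, $\mathfrak{c}$, $E(\mathfrak{c})$ --- this is why it appears in the third bullet but is absorbed in the second. With your reading of $\Sigma_M$ you would not recover the correct counts in the second and third cases, so this part of the argument needs to be redone around invariants (iii) and (iv) of Theorem \ref{invariant sums} rather than around multiplicities of summands.
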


Theorem \ref{caso geral} completes the calculation of $|\mathfrak{g}( \Z^n \rtimes_{\psi_{}} C_{p^2})|$, because any faithful semidirect product falls into one of the above cases, while a non-faithful semidirect product $\Z^n\rtimes_{\psi} C_{p^2}$ has the same genus as the corresponding semidirect product $\Z^n\rtimes_{\psi}{(C_{p^2}/C_p)}$ by Theorem \ref{theorem sd products in same genus iff modules in same orbit}, and hence its genus is given by \cite[Proposition 2.23]{GZ}.

\subsubsection*{Acknowledgements}
     
    The first author was supported by  CAPES Doctoral Grant 88887.688170/2022-00 and by FAPEMIG Doctoral Grant 13632/2025-00.  The second author was partially supported by the CNPq Universal Grant 402934/2021-0, CNPq Produtividade 1D Grant 303667/2022-2, and FAPEMIG Universal Grants APQ-00971-22 and APQ-03491-25. The third author is grateful for the financial support from FAPDF and the Post-Doctoral internship period at the Federal University of Minas Gerais. We thank Sam Hughes for drawing our attention to relevant literature.

\section{Classification of the indecomposable $\mathbb{Z} G$-lattices and their profinite completions}\label{2}


The classification of indecomposable $\mathbb{Z} C_{p^2}$-lattices due to Reiner (Theorem \ref{integral indecom}) requires some notation, which will be used throughout the paper.  We mostly maintain the notation used by Reiner and we present it here.

Let $G=\langle g\rangle\;$ be a cyclic group of order $p^2.$ Whenever $M$ is a $\Z G$-module we denote by $\overline{M}$ the reduction modulo $p$ of $M$, that is $\overline{M}:=M/pM$. Let 
$$\phi_p(x)=1+ x+\cdots+x^{p-1}\hbox{ and }\phi_{p^2}(x)=1+ x^p+\cdots+x^{p\cdot (p-1)}$$
denote the $p$-th and $p^2$-th cyclotomic polynomials, respectively. 

 We will denote by $\Lambda$ the ring $\Z G$, and we define three fundamental objects, $E$, $R$ and $S$.  For each of them, we will need to interpret them both as submodules of $\Lambda$, and as quotient rings of $\Lambda$, as follows:
 \begin{itemize}
  \item $E$ is the quotient ring $\Lambda/(g^p-1)\Lambda$.  It is isomorphic as a $\Lambda$-module to the submodule $\phi_{p^2}(g) \Lambda$ of $\langle g^p\rangle$-fixed points of $\Lambda$, which we also refer to as $E$.
  
  \item $S$ is the quotient ring $\Lambda/\phi_{p^2}(g) \Lambda$.  It is isomorphic as a $\Lambda$-module to the submodule $(g^p-1)\Lambda$ of $\Lambda$, which we also refer to as $S$.
  \item $R$ is the quotient ring $\Lambda/\phi_{p}(g)\Lambda$.  It is isomorphic as a $\Lambda$-module to the submodule $(g-1)\phi_{p^2}(g)\Lambda$ of $\Lambda$, which we also refer to as $R$.
  \end{itemize}
 There exists a natural isomorphism 
 $$\Lambda=\Z G \iso \Z[x]/(x^{p^2}-1)$$
 which sends $g$ to $x$. This interpretation of $\Lambda$ yields the following interpretations for $E, R$ and $S$ when we treat them as quotient rings of $\Lambda$:
 $$E=\Z[x]/(x^p-1),$$ $$R=\Z[x]/(\phi_p(x))\;\text{and}\; S=\Z[x]/(\phi_{p^2}(x)).$$
 This interpretation yields  obvious isomorphisms of rings: $R\iso \Z[\zeta_{p^2}^p]$ and $S\iso \Z[\zeta_{p^2}]$, where $g$ will be sent to $\zeta_{p^2}^p$ (in $R$) or $\zeta_{p^2}$ (in $S$) via the identification of $g$ with $x$. 
 Also, treating  $E$ as a quotient ring of $\Lambda$, the last interpretation of $\Lambda$ gives us an isomorphism between $\overline{E}$ and the ring $\F_p[\lambda]/(\lambda^p)$  where $\lambda:=g-1$. We also have ring isomorphisms $\overline{S}\iso\F_p[\lambda]/(\lambda^{p(p-1)})$ and $\overline{R}=\F_p[\lambda]/(\lambda^{p-1})$. So we get natural projections $$R\longrightarrow \overline{R}\longrightarrow \F_p[\lambda]/(\lambda^m),$$ for $0\leqslant m\leqslant p-1$,  and $$S\longrightarrow\overline{S}\longrightarrow\overline{E}=\F_p[\lambda]/(\lambda^{p}).$$ Denote by $u(A)$ the group of units of a commutative ring $A$. We will refer to "\emph{image of $u(R)$}" as the image of $u(R)$ in $\F_p[\lambda]/(\lambda^m)$ for $0\leqslant m\leqslant p-1$. Similarly, we refer to "\emph{image of $u(S)$}" as the image of $u(S)$ in $\overline{E}$. We define the factor groups as
\begin{equation}\label{Um} \left\{
\begin{array}{cc}

U_m = \displaystyle \frac{ u\left(    \frac{  \F_p[\lambda]  } {\left( \lambda^{m} \right) }  \right) } {\mbox{ image of }  u(R)  }, 1 \leqslant m \leqslant p-1;\\

\\

U_0=\{1\} \mbox{ and } U_p = \displaystyle \frac{ u(\overline{E}) } { \mbox{ image of } u(E)  \cdot \mbox{ image of } u(S) } .\\

\end{array}\right.
\end{equation} For each $1 \leqslant m  \leqslant p,$ let $\widetilde{U}_m$ denote a full set of representatives $u$ (in $u\left(\overline{R}\right)$ or $u\left(\overline{E}\right)$ depending on $m$) of the elements of $U_m,$ where these $u's$ are chosen so that $u \equiv \, 1 (\mbox{mod } \, \lambda )$.

Whenever $A$ is a ring and $X,Y$ are $A$-modules and $\gamma\in \tn{Ext}^1_{A}(Y,X)$, we denote by $(X,Y;\gamma)$ the $A$-module given as the extension
$$0 \longrightarrow X \longrightarrow(X,Y;\gamma) \longrightarrow Y \longrightarrow0$$
with extension class $\gamma$.

The classification of the indecomposable $\Lambda$-lattices is as follows 
(see \cite[Theorem 7.3]{reiner1978}, \cite[Theorem 1]{jones} or \cite[Theorem 34.35]{curtis}).  See for example \cite{wash} for basic definitions regarding ideal class groups.

\begin{theorem} \label{integral indecom} Let $\mathfrak{b}$ range over a full set of representatives of the ideal classes of $R=\Zzeta,$ and $\kc$ likewise for the ideal classes of $S=\Zzetab.$ 
Let $n_0$ be some  fixed quadratic non-residue ($\mbox{mod } p$), when $p \equiv \, 1 (\mbox{mod } \, 4 ).$ 
Then a full list of non-isomorphic indecomposable $\mathbb{Z}G$-lattices, where $G$ is a cyclic group of order $p^2,$ is as follows \begin{enumerate}
\item [(A)] $\Z, \mathfrak{b},  E(\mathfrak{b})=(\Z, \mathfrak{b}; 1), \mathfrak{c}, E(\mathfrak{c})=(\Z, \mathfrak{c}; 1);$ 

\item[(B)] $ (E(\mathfrak{b}), \mathfrak{c}; \lambda^{r} u), u \in \widetilde{U}_{p-r}, 0 \leqslant r \leqslant p-1;$

\item[(C)] $ (\Z \oplus E(\mathfrak{b}), \mathfrak{c}; 1 \oplus \lambda^{r}u), u \in \widetilde{U}_{p-1-r}, 1 \leqslant r \leqslant p-2;$

\item[(D)] If $p \equiv 1 \, (\mbox{mod } 4),$ $(\Z \oplus E(\mathfrak{b}), \mathfrak{c}; 1 \oplus \lambda^{r} u n_{_{0}}), u \in \widetilde{U}_{p-1-r}, 1 \leqslant r \leqslant p-2;$ 

\item[(E)] $ (\mathfrak{b}, \mathfrak{c}; \lambda^{r} u), u \in \widetilde{U}_{p-1-r}, 0 \leqslant r \leqslant p-2;$

\item[(F)] $ (\Z \oplus \mathfrak{b}, \mathfrak{c}; 1 \oplus \lambda^{r} u), u \in \widetilde{U}_{p-1-r},  0 \leqslant r \leqslant p-2.$    
\end{enumerate}    
\end{theorem}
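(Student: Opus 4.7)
The plan is to exploit the factorization $x^{p^2}-1 = (x-1)\phi_p(x)\phi_{p^2}(x)$, which realizes $\Lambda$ as an iterated fibre product of the rings $\Z$, $R$ and $S$. The key observation is that we have a Cartesian square
$$\begin{array}{ccc} \Lambda & \longrightarrow & S \\ \downarrow & & \downarrow \\ E & \longrightarrow & \overline{E} \end{array}$$
in which the maps are the natural projections. By Milnor patching for fibre products of orders, a $\Lambda$-lattice is equivalent to a triple $(M_E, M_S, \varphi)$ where $M_E$ is an $E$-lattice, $M_S$ is an $S$-lattice, and $\varphi\colon \overline{E}\otimes_E M_E \xrightarrow{\sim} \overline{E}\otimes_S M_S$ is an $\overline{E}$-linear isomorphism; morphisms are pairs $(f_E,f_S)$ compatible with $\varphi$. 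Indecomposable $\Lambda$-lattices correspond to triples that admit no nontrivial direct sum decomposition in this patching category.

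I would first dispose of the constituents. Since $S=\Zzetab$ is a Dedekind domain, its indecomposable lattices are the fractional ideals $\mathfrak{c}$, indexed up to isomorphism by $H(p^2)$. For $E=\Z C_p$ one invokes Reiner's earlier classification \cite{rein0} (proved by the same pullback technique applied one dimension lower), whose indecomposables are $\Z$, the ideals $\mathfrak{b}$ of $R$, and the extensions $E(\mathfrak{b})=(\Z,\mathfrak{b};1)$. Triples in which $M_S=0$ or $M_E=0$ then account for the list (A), together with $E(\mathfrak{c})=(\Z,\mathfrak{c};1)$, which is constructed directly as a non-split $\Lambda$-extension of $\mathfrak{c}$ by $\Z$. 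For families (B)--(F) one fixes $M_S=\mathfrak{c}$ and lets $M_E$ range over $\{E(\mathfrak{b}),\ \Z\oplus E(\mathfrak{b}),\ \mathfrak{b},\ \Z\oplus \mathfrak{b}\}$, then enumerates the gluing data $\varphi$ modulo the two-sided action of $\mathrm{Aut}_E(M_E)\times \mathrm{Aut}_S(\mathfrak{c})$. Since $\mathrm{Aut}_S(\mathfrak{c})=u(S)$ and the unit groups of $\mathrm{End}_E(M_E)$ reduce to combinations of $u(R)$ and $u(\overline{E})$, their image on the gluing data is exactly what is divided out in the definition of $U_m$ in \eqref{Um}. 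A direct calculation shows that an orbit is determined by the $\lambda$-adic valuation $r$ of $\varphi$ together with a class in one of the groups $U_m$; the precise $m$ and the range of $r$ depend on the automorphism group of $M_E$, which is smaller when $M_E$ contains a $\Z$-summand. Picking the distinguished representatives $u\in\widetilde{U}_m$ with $u\equiv 1\pmod\lambda$ then produces the canonical forms $\lambda^r u$ in (B), (C), (E), (F). Family (D) appears only when $p\equiv 1\pmod 4$: in that case $-1$ is a quadratic residue modulo $p$, and the action of $-1\in\mathrm{Aut}_E(\Z)$ on $1\oplus\lambda^r u$ splits the orbit into two, the second being represented by multiplication by a fixed non-residue $n_0$.

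The main obstacle is verifying that the list is simultaneously exhaustive, irredundant, and consists of indecomposables. Indecomposability follows from the pullback description $\mathrm{End}_\Lambda((M_E,M_S,\varphi))=\mathrm{End}_E(M_E)\times_{\mathrm{End}_{\overline E}(\overline{M_E})}\mathrm{End}_S(M_S)$: for each triple in the list this ring is local, so Krull--Schmidt applies. Non-isomorphism combines Krull--Schmidt on the constituents with the precise enumeration of orbits in $U_m$, and exhaustiveness requires that the ranges $0\le r\le p-1$ in (B), $1\le r\le p-2$ in (C)/(D), and $0\le r\le p-2$ in (E)/(F) cover every possible $\lambda$-valuation of $\varphi$. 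This is a routine but careful case analysis driven by the $\overline{E}$-rank of each $\overline{M_E}$ and by the structure of $U_m$.
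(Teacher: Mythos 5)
The paper offers no proof of this statement: Theorem \ref{integral indecom} is Reiner's classification, quoted from the literature (\cite[Theorem 7.3]{reiner1978}, Jones, or Curtis--Reiner), so there is no internal argument to compare yours against. Your outline does follow the strategy of those sources --- the conductor square relating $\Lambda$, $E$, $S$ and $\overline{E}$, and the analysis of a $\Lambda$-lattice via its $E$-part, its $S$-part, and the gluing/extension datum between them --- so the general approach is the right one.

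As a proof, however, it has a genuine gap located exactly where the content of the theorem lies. You fix $M_S=\mathfrak{c}$ (a single ideal) and let $M_E$ range over four prescribed shapes, then defer the enumeration of gluing data to a ``routine but careful case analysis.'' Nothing in the sketch rules out indecomposable triples whose $S$-part has rank $\geq 2$, or whose $E$-part is, say, $\mathfrak{b}_1\oplus\mathfrak{b}_2$ or $E(\mathfrak{b})\oplus\mathfrak{b}'$: the finiteness of representation type and the specific shapes of the constituents are assumed rather than derived, and this reduction, together with the identification of the orbit space of $\varphi$ under $\mathrm{Aut}_E(M_E)\times\mathrm{Aut}_S(\mathfrak{c})$ with $\{\lambda^r u\}$ for precisely the stated ranges of $r$ and the groups $U_m$ of (\ref{Um}), is the substance of Reiner's long argument. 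Two further points. First, Krull--Schmidt fails for $\Z G$-lattices (the paper itself depends on this failure: decompositions into indecomposables are not unique), so irredundancy of the list cannot be settled by ``Krull--Schmidt on the constituents''; it requires the invariants of Theorem \ref{invariant sums}. Second, your mechanism for family (D) is inverted: a group action cannot ``split'' an orbit. The correct statement is that the relevant automorphisms move $u$ within its square class and by $\pm 1$; when $p\equiv 1\pmod 4$ the element $-1$ is itself a square, so the two square classes of $u(\F_p)$ are \emph{not} merged and a second family, represented by the non-residue $n_0$, survives, whereas for $p\equiv 3\pmod 4$ multiplication by $-1$ identifies the two classes and family (D) collapses into (C).
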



Let $\Z_p$ be the ring of $p$-adic integers and, for a finite $p$-group $H$, denote by $M_p$ the pro-$p$ completion of the $\Z H$-lattice $M$. The following result lists all non-isomorphic  indecomposable $\Z_p C_{p^2}$-lattices (see for example Theorem 34.32 in \cite{curtis} or p. 88 in \cite{heller}).

\begin{theorem} \label{p adic indecomposable}
There are precisely $4p+1$ non-isomorphic indecomposable $\mathbb{Z}_p C_{p^2}$-lattices. These are given by \begin{enumerate}
\item [(A)] $\Z_p, R_p=\Zpzeta,  E_p=\Z_pC_p, S_p=\Zpbzetab, (  \Z_p, \Zpbzetab; 1);$ 

\item[(B)] $ (\Z_pC_p, \Zpbzetab; \lambda^{r}), 0 \leqslant r \leqslant p-1;$

\item[(C=D)] $ (\Z_p \oplus \Z_pC_p, \Zpbzetab; 1 \oplus \lambda^{r}), 1 \leqslant r \leqslant p-2;$

\item[(E)] $ (\Zpzeta, \Zpbzetab; \lambda^{r}), 0 \leqslant r \leqslant p-2;$

\item[(F)] $ (\Z_p \oplus \Zpzeta, \Zpbzetab; 1 \oplus \lambda^{r}), 0 \leqslant r \leqslant p-2.$    
\end{enumerate}  
The pro-$p$ completion of a lattice of Type (X) in Theorem \ref{integral indecom} is of Type (X) here.
\end{theorem}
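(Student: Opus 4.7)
The plan is to reproduce Reiner's pullback classification of $\Z C_{p^2}$-lattices (Theorem \ref{integral indecom}) in the $p$-local setting and observe that all the parameters collapse. Reiner's construction realises each indecomposable via iterated pullback diagrams built from the short exact sequences $0 \to E \to \Lambda \to S \to 0$ and $0 \to R \to E \to \Z \to 0$. An indecomposable is specified by a module structure over each of $\Z$, $R$, $S$ (an ideal class $\mathfrak{b}$ for $R$ or $\mathfrak{c}$ for $S$) together with gluing data over the residue quotients $\overline{R}$ and $\overline{E}$, taken modulo the unit actions encoded in the groups $U_m$ of \eqref{Um}. I would repeat this analysis verbatim over $\Z_p$ and show that the parameters degenerate.

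The key simplifications over $\Z_p$ are as follows. The rings $R_p = \Zpzeta$ and $S_p = \Zpbzetab$ are complete discrete valuation rings (totally ramified extensions of $\Z_p$), so their class groups are trivial and the parameters $\mathfrak{b},\mathfrak{c}$ reduce to a single choice. For the unit quotients, the local ring $R_p$ satisfies $p R_p \subseteq \rad(R_p)$, so $u(R_p) \twoheadrightarrow u(R_p/p R_p) = u(\F_p[\lambda]/(\lambda^{p-1}))$; composing with the projection onto $u(\F_p[\lambda]/(\lambda^m))$ for $1 \leq m \leq p-1$ shows that the image of $u(R_p)$ already exhausts the target, so the $p$-local analogue of $U_m$ is trivial. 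For $U_p$, the group ring $E_p = \Z_p C_p$ is semilocal with $p E_p \subseteq \rad(E_p)$, so $u(E_p) \twoheadrightarrow u(\overline{E}_p)$ and $U_p$ is trivial as well. Consequently every $\widetilde{U}_m$ over $\Z_p$ collapses to $\{1\}$, and the non-residue $n_{0}$ appearing in Type (D) --- now in the image of a surjective unit map --- is absorbed into an isomorphism, merging Types (C) and (D). The remaining non-isomorphic indecomposables are precisely those on the stated list.

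The count is immediate: Type (A) contributes $5$, Type (B) contributes $p$ (from $r = 0,\dots,p-1$), the merged Type (C=D) contributes $p-2$, and Types (E) and (F) each contribute $p-1$, giving $5 + p + (p-2) + (p-1) + (p-1) = 4p+1$. For the final assertion, pro-$p$ completion is the exact functor $(-) \otimes_\Z \Z_p$ on $\Z C_{p^2}$-lattices; it sends the building blocks $\Z, R, S, E$ to $\Z_p, R_p, S_p, E_p$ and any ideal $\mathfrak{b}$ (resp. $\mathfrak{c}$) in the genus of $\Z[\zeta_p]$ (resp. $\Z[\zeta_{p^2}]$) to $R_p$ (resp. $S_p$), since every fractional ideal localises to the full valuation ring at $p$. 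Moreover the functor carries extension classes in $\Ext^{1}$ functorially, so a lattice presented as a pullback of Type (X) in Theorem \ref{integral indecom} is sent to the corresponding pullback of Type (X) here.

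The main obstacle is the careful verification that the images of $u(R_p)$ and $u(E_p)$ exhaust the unit groups of the relevant Artinian quotients --- this is where the contrast between $\Z$ (where class field-theoretic phenomena yield non-trivial $U_m$) and $\Z_p$ (where completeness and Hensel lifting kill the obstructions) is decisive and forces the collapse responsible for the finiteness and simple form of the $p$-local classification.
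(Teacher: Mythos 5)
Your proposal is essentially correct in all of its particular claims, but it takes a different route from the paper, which offers no proof at all: Theorem \ref{p adic indecomposable} is simply cited from the literature (Theorem 34.32 in Curtis--Reiner, or Heller--Reiner). Your strategy is to specialise the global pullback machinery behind Theorem \ref{integral indecom} to $\Z_p$ and watch the parameters degenerate, and the three degenerations you identify are exactly the right ones: $\Zpzeta$ and $\Zpbzetab$ are complete discrete valuation rings, so the ideal-class parameters $\mathfrak{b},\mathfrak{c}$ disappear; since $pR_p\subseteq\rad(R_p)$ and $pE_p\subseteq\rad(E_p)$, units lift along the reductions onto $u(\F_p[\lambda]/(\lambda^m))$ and $u(\overline E)$, so every local analogue of the groups $U_m$ in \eqref{Um} is trivial and the parameter $u$ disappears; and surjectivity of $u(\Z_p)\to u(\F_p)$ absorbs the non-residue $n_0$, merging Types (C) and (D). The count $5+p+(p-2)+(p-1)+(p-1)=4p+1$ is right, and your argument for the last assertion (completion is $-\otimes_\Z\Z_p$, fractional ideals of $\Z[\zeta_{p^i}]$ localise to the full valuation ring, and extension classes are carried functorially with $\lambda^r u\mapsto\lambda^r\cdot(\text{unit})$) is the correct justification for the statement the paper itself leaves unjustified.

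Two caveats. First, the logical direction is inverted relative to the literature: Reiner's proof of Theorem \ref{integral indecom} takes the $\Z_p$-classification as its \emph{input}, so presenting the local list as a ``collapse'' of the global one risks circularity unless, as you indicate, the pullback analysis is genuinely redone over $\Z_p$ from scratch --- which is precisely the Heller--Reiner argument. Second, the phrase ``repeat this analysis verbatim'' conceals the real content of any such proof, namely the matrix-reduction argument showing the listed lattices are indecomposable, pairwise non-isomorphic and exhaustive; your sketch explains why the local answer is simpler than the global one but does not itself re-establish exhaustiveness. Since the paper operates at the level of citation here, this is an acceptable level of detail, but it should be flagged as a sketch rather than a self-contained proof.
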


We will say that $M$ and $M'$ are in the same genus if their profinite completions $\widehat{M}$ and $\widehat{M'}$ are isomorphic as $\widehat{\Z} H$-modules, in which case we use the notation $M\vee M'$.  
By \cite[Proposition 31.2]{curtis}, $M\vee M'$ if and only if $M_p \cong M'_p$. 



Recall that the decomposition of a $\Z G$-lattice as a direct sum of indecomposables is not unique.  Given a direct sum $\bigoplus M_i = M$ of indecomposable $\Lambda$-lattices, define $u_0(M)$ to be the product, in $u(\overline{E})$, of all the $u's$ and $un_0's$ that appear in the given indecomposable summands of $M$ of types $(B)-(F)$, and if there are no such summands, set $u_0(M)=1$. 
Define $r_1(M)$ to be the largest exponent $r$ that occurs in a summand of the given decomposition of $M$ of type (B), and if there are no such summands, set $r_1(M)=0$.  Define $r_2(M)$ to be the largest exponent $r$ that occurs in a summand of the given decomposition of $M$ of types $(C)$ to $(F),$ and if there are no such summands, set $r_2(M)=0$.  Finally, set $t:= p-1- \mbox{ max } \{r_2(M),\,\, r_1(M) - 1\}$, unless $M = \Z^{a} \oplus  \displaystyle \bigoplus_{i=1}^{s}\, (E(\mathfrak{b}), \mathfrak{c}; u_i) $ with $s\geqslant 1$, in which case $t:= p$.


By the $R$-ideal class of a sum of indecomposable $\Lambda$-lattices, we mean the element of the class group of $R$ given as the product of the ideals $\mathfrak{b}$ appearing in the indecomposable summands, and similarly the $S$-ideal class is the product of the $\mathfrak{c}$.  For example, if 
$$M = \Z \oplus \mathfrak{b}\oplus E(\mathfrak{c})\oplus (\Z \oplus E(\mathfrak{b}'), \mathfrak{c}'; 1 \oplus \lambda^{r}u), $$
then the $R$-ideal class is $\mathfrak{b}\mathfrak{b}'$ and the $S$-ideal class is $\mathfrak{c}\mathfrak{c}'$.  The theorem below gives the isomorphism invariants for direct sums of indecomposable $\Z C_{p^2}$-lattices, where notation is as defined after Theorem \ref{p adic indecomposable}.

\begin{theorem} [Theorem 7.8 in \cite{reiner1978} or Theorem 2 in \cite{jones}] \label{invariant sums} Let $M$ be a $\Z G$-lattice, where $G$ is a cyclic group of order $p^2.$ 
A full set of isomorphism invariants of $M$ consists of 
\begin{enumerate}
\item [(i)] The genus of $M;$

\item [(ii)] The $R$ and $S$-ideal classes associated with $M;$

\item [(iii)] If $M$ has no summand of types $\kb, E(\kb), \kc, E(\kc)$, then the image of $u_0(M)$ in $U_t;$  

\item [(iv)] If $p \equiv \, 1 (\mbox{mod } \, 4 )$ and $M$ has no summand of type $\Z, E(\kb), E(\kc), (E(\kb), \kc; \lambda^r u)$ or $( \Z \oplus \mathfrak{b}, \mathfrak{c}; 1 \oplus \lambda^{r} u),$ 
then the quadratic character of the image of $u_0(M)$ in $u(\F_p).$ 
\end{enumerate}
\end{theorem}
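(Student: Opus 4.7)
The plan is to prove the theorem in two stages: first show that each of the listed data is genuinely an invariant of the isomorphism class of $M$ (i.e.\ does not depend on the choice of indecomposable decomposition from Theorem \ref{integral indecom}, since Krull--Schmidt fails integrally); then show that these invariants are complete, in the sense that two lattices realising the same values are isomorphic.

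For well-definedness, the genus in (i) depends only on $M$ by definition, so there is nothing to prove. For (ii), I would construct intrinsic $R$- and $S$-modules from $M$ whose Steinitz classes coincide with the advertised products of ideal classes; the natural candidates are the $R$-module $M/\phi_{p}(g)M$ and the $S$-module $\phi_{p^2}(g)M\cong (g^p-1)M$ (or, equivalently, $M$ tensored on each side with $R$ and $S$ after inverting $p$). Standard Steinitz theory for Dedekind domains then ensures that $\prod \mathfrak{b}_i$ and $\prod \mathfrak{c}_j$, computed in any decomposition of $M$, agree respectively with these intrinsic Steinitz classes. For (iii) and (iv), the difficulty is that different decompositions yield different unit elements $u_0(M)$; my plan is to show that the ambiguity lives inside the subgroups by which we quotient to define $U_t$ and $u(\F_p)^{\times}/(u(\F_p)^{\times})^2$. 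The tool is an explicit catalogue of ``exchange moves'' between pairs of indecomposables of types (B)--(F): given two such summands one can change the units in their extension classes by multiplying one by $\alpha$ and the other by $\alpha^{-1}$, where $\alpha$ ranges over the images of $u(R)$, $u(E)$, $u(S)$ allowed by the types involved; these moves generate the passage between decompositions, and the quotient in \eqref{Um} has been set up precisely to kill them.

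For completeness, suppose $M$ and $M'$ share all the invariants and fix decompositions of both into indecomposables. Theorem \ref{p adic indecomposable} together with the Krull--Schmidt theorem for $\Z_pC_{p^2}$-lattices (which holds because the local ring is semiperfect and there are only finitely many indecomposables) shows that the genus invariant (i) forces the multisets of $p$-adic types occurring in $M$ and $M'$ to coincide; at every other prime $q$ the completion is automatically free of rank equal to the $\Z$-rank, so matches as well. This lets me pair up summands of $M$ with summands of $M'$ having the same ``shape'' in the classification, reducing to the problem of showing that once shapes match, the ideal class and unit invariants suffice. Steinitz's exchange lemma permits me to concentrate the $R$- and $S$-ideal classes into a single designated summand in each of $M$ and $M'$; by (ii) these concentrated ideals lie in the same classes, so the two summands are isomorphic and can be paired off, with the remaining summands realised on principal ideals. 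Finally, the reverse direction of the exchange moves used in the well-definedness step lets me redistribute a single overall unit factor $u_0(M)u_0(M')^{-1}\in U_t$ across the summands; by (iii) (and (iv) when $p\equiv 1\pmod 4$) this factor is trivial, so each paired summand becomes isomorphic and hence $M\cong M'$.

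The main obstacle is the unit invariant step in both directions: one must write down explicitly, for every combination of indecomposable types (B)--(F), the automorphism group of a direct sum of two of them and extract the precise subgroup through which the unit $u$ in one summand can be modified while compensating in the other. The definition of $t$ as $p-1-\max\{r_2(M),r_1(M)-1\}$ (or $t=p$ in the special case isolated after Theorem \ref{p adic indecomposable}) is the output of this bookkeeping, and checking that it matches the calculation in every case is the core technical content of the theorem. Condition (iv) is then a residual $\{\pm 1\}$-ambiguity arising from the fact that families $(C)$ and $(D)$ become indistinguishable only up to the quadratic character when $p\equiv 1\pmod 4$, and must be tracked separately in the unit calculus.
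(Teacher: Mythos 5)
This statement is not proved in the paper at all: it is quoted verbatim from the literature (Theorem 7.8 of Reiner's paper, or Theorem 1 of Jones), so there is no internal argument to compare yours against. Judged on its own terms, your submission is a plan rather than a proof, and the plan defers exactly the part that constitutes the theorem. You say yourself that ``the main obstacle is the unit invariant step'' and that ``checking that it matches the calculation in every case is the core technical content of the theorem''---but you never carry out that check. The whole point of items (iii) and (iv) is the precise identification of the subgroup of $u(\overline{E})$ (respectively of $u(\F_p)$ modulo squares) by which $u_0(M)$ can be perturbed when one passes between decompositions or between isomorphic lattices; without writing down the automorphisms of two-term direct sums of types (B)--(F) and verifying that the allowed perturbations are exactly the images of $u(R)$, $u(E)$ and $u(S)$ prescribed in (\ref{Um}), and that the exponent $t=p-1-\max\{r_2(M),r_1(M)-1\}$ (with the exceptional value $t=p$) is the correct one, nothing has been established. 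The same applies to the hypotheses of (iii) and (iv): you must show that the \emph{presence} of a summand of type $\mathfrak{b}$, $E(\mathfrak{b})$, $\mathfrak{c}$, $E(\mathfrak{c})$ (resp.\ $\Z$, $E(\mathfrak{b})$, $E(\mathfrak{c})$, type (B) or (F)) lets one absorb an arbitrary unit (resp.\ an arbitrary quadratic character), which is again a concrete computation you only gesture at.

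Two smaller points. First, in your well-definedness step for (ii), the candidates $M/\phi_p(g)M$ and $\phi_{p^2}(g)M$ are not quite right as stated: $M/\phi_p(g)M$ can have $\Z$-torsion, so you need to pass to the torsion-free quotient (equivalently, project $M$ into the $\Q(\zeta_p)$- and $\Q(\zeta_{p^2})$-isotypic components of $\Q\otimes_\Z M$) before invoking Steinitz theory. Second, in the completeness direction the ``concentration'' of ideal classes into a single summand is standard for plain sums of ideals, but for extensions such as $(E(\mathfrak{b}),\mathfrak{c};\lambda^r u)$ moving an ideal class between summands simultaneously changes the extension class, so the ideal-class bookkeeping and the unit bookkeeping are coupled and cannot be handled in two independent passes as your outline suggests. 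These are not stylistic quibbles: they are where the actual content of Reiner's Theorem 7.8 lives, and your proposal leaves them open.
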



Given a $\Lambda$-lattice $M$, we may express it as
\begin{equation} \label{forma M} M= \Z^{a} \oplus \displaystyle \bigoplus_{i=1}^{b} \, \mathfrak{b}_i \oplus  \bigoplus_{j=1}^{c} \, \mathfrak{c}_j  \oplus \bigoplus_{k=1}^{d} E(\mathfrak{b}'_k) \oplus \bigoplus_{l=1}^{e} \, E(\mathfrak{c}'_l)  \oplus M_B \oplus M_{C} \oplus M_D \oplus M_E \oplus M_F,\end{equation} where the direct summands $M_B$ -- $M_F$ are of the types $(B)$--$(F),$ respectively. More explicitly, 
\begin{equation} \label{forma MB} 
  M_B = \bigoplus_{r=0}^{p-1} \left( \displaystyle \bigoplus_{k_r= 1 }^{\beta_r}\, \left(E(\mathfrak{b}_{(B,r,k_r)}), \mathfrak{c}_{(B,r,k_r)}; \lambda^r u_{(B,r,k_r)}\right) \right);    
\end{equation}

\begin{equation} \label{forma MCD} 
  M_C =\bigoplus_{r=1}^{p-2} \left( \displaystyle \bigoplus_{l_r= 1}^{\gamma_r}\, \left(\Z \oplus E(\mathfrak{b}_{(C,r,l_r)}), \mathfrak{c}_{(C,r,l_r)}; 1 \oplus \lambda^r u_{(C,r,l_r)}\right) \right);    
\end{equation}

\begin{equation} \label{forma MD} 
  M_D =\bigoplus_{r=1}^{p-2} \left( \displaystyle \bigoplus_{m_r= 1}^{ \delta_r}\, \left(\Z \oplus E(\mathfrak{b}_{(D,r,m_r)}), \mathfrak{c}_{(D,r,m_r)}; 1 \oplus n_0 \cdot \lambda^r \cdot u_{(D,r,m_r)}\right) \right); 
\end{equation}

\begin{equation} \label{forma ME} 
  M_E = \bigoplus_{r=0}^{p-2} \left( \displaystyle \bigoplus_{s_r= 1}^{\epsilon_r}\, \left(\mathfrak{b}_{(E,r,s_r)}, \mathfrak{c}_{(E,r,s_r)}; \lambda^r u_{(E,r,s_r)}\right) \right);    
\end{equation}

\begin{equation} \label{forma MF} 
  M_F = \bigoplus_{r=0}^{p-2} \left( \displaystyle \bigoplus_{t_r= 1 }^{\eta_r}\, \left(\Z \oplus \mathfrak{b}_{(F,r,t_r)}, \mathfrak{c}_{(F,r,t_r)}; 1 \oplus \lambda^r u_{(F,r,t_r)}\right) \right). 
\end{equation}  
We will sometimes denote $M_{C} \oplus M_D$ as $M_{C,D}.$ Define the tuples: $$\beta:=(\beta_0,...,\beta_{p-1});\; \gamma:=(\gamma_1,...,\gamma_{p-2});\; 
\delta:=(\delta_1,...,\delta_{p-2});$$
$$\epsilon:=(\epsilon_0,...,\epsilon_{p-2})\, \mbox{ and } \, \eta:=(\eta_0,...,\eta_{p-2}).$$
By the previous three theorems, the vector
$$(a,b,c,d,e; \beta, \gamma, \delta, \epsilon, \eta)$$
determines the genus of $M$.



\begin{lemma} \label{posto e completamentos}
Let $G$ be cyclic of order $p^2$ and $M$ and $M'$ two $\Z G$-lattices with parameters $(a,b,c,d,e; \beta, \gamma, \delta, \epsilon, \eta)$ and $(a',b',c',d',e'; \beta', \gamma', \delta', \epsilon', \eta')$ respectively.  Then $M$ and $M'$ are in the same genus if, and only if,  
$a=a', b=b', c=c', d=d', e=e', \beta=\beta', \gamma + \delta=\gamma'+ \delta', \epsilon=\epsilon'$ and $\eta=\eta'$.

\end{lemma}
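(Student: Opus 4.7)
The plan is to reduce the question to comparing $p$-adic completions and then apply the Krull--Schmidt theorem using Theorem \ref{p adic indecomposable}. Recall from the discussion after Theorem \ref{p adic indecomposable} that $M\vee M'$ if and only if $M_p\cong M'_p$. So the task is to translate the integral decomposition data $(a,b,c,d,e;\beta,\gamma,\delta,\epsilon,\eta)$ into the multiplicities of the $4p+1$ indecomposable $\Z_p G$-lattices listed in Theorem \ref{p adic indecomposable}, and show that $M_p\cong M'_p$ precisely when those multiplicities agree.

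First I would read off the pro-$p$ completion of each integral indecomposable summand of $M$. Because $\Z_p\otimes_\Z R = \Z_p[\zeta_p]$ and $\Z_p\otimes_\Z S = \Z_p[\zeta_{p^2}]$ are (complete) discrete valuation rings, every nonzero ideal becomes free of rank one after completion; hence $\mathfrak{b}_p\cong R_p$, $\mathfrak{c}_p\cong S_p$, and similarly $E(\mathfrak{b})_p\cong E_p$ and $E(\mathfrak{c})_p\cong(\Z_p,S_p;1)$. This explains why the five type (A) parameters $a,b,c,d,e$ each transform into a multiplicity of one of the five type (A) $\Z_p G$-indecomposables. For types (B)--(F), one checks that the unit coefficient $u\in\widetilde{U}_m$ (chosen $\equiv 1\pmod{\lambda}$) becomes an isomorphism invariant that is trivialized after completion, reducing $(E(\mathfrak{b}),\mathfrak{c};\lambda^r u)_p$ to $(\Z_p C_p,S_p;\lambda^r)$ and analogously for types (E), (F); crucially, the non-residue factor $n_0$ in type (D) becomes a unit square (or is otherwise absorbed) in $\Z_p$, so that types (C) and (D) collapse to a single $\Z_p G$-indecomposable (this is the content of the ``$C=D$'' notation in Theorem \ref{p adic indecomposable}).

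Next I would invoke the Krull--Schmidt theorem for $\Z_p G$-lattices (which holds because $\Z_p G$ is a complete Noetherian semilocal ring; see, e.g., Theorem 30.18 of \cite{curtis}). This gives that $M_p\cong M'_p$ if and only if, for each of the $4p+1$ $\Z_p G$-indecomposables, the multiplicity in the decomposition of $M_p$ equals that in $M'_p$. Combining with the completion recipe above, the multiplicities of the type (A), (B), (E), (F) $\Z_p G$-indecomposables are read off from $a,b,c,d,e,\beta_r,\epsilon_r,\eta_r$ (forcing the corresponding equalities), while for each $1\le r\le p-2$ the multiplicity of the single type $(C{=}D)$ $\Z_p G$-indecomposable with exponent $r$ equals $\gamma_r+\delta_r$. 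This yields exactly the equalities asserted in the lemma, in both directions.

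There is no real obstacle here; the only subtle point — which must be spelled out — is that passing to $\Z_p$ trivialises ideal classes and the unit representatives $\widetilde U_m$, and merges types (C) and (D), which is precisely why the lemma sees $\gamma+\delta$ rather than $\gamma$ and $\delta$ separately. Once the correspondence between the integral parameters and the $\Z_p G$-multiplicities is laid out, both implications follow immediately from uniqueness in Krull--Schmidt.
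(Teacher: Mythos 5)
Your proof is correct and follows essentially the same route as the paper, which likewise reduces to $p$-adic completions, invokes the final comment of Theorem \ref{p adic indecomposable} (completion preserves types, with (C) and (D) merging), and applies Krull--Schmidt for $\Z_p G$-lattices; you simply spell out the details the paper leaves implicit. One small inaccuracy: $n_0$ does not become a square in $\Z_p$ (by Hensel's lemma a $p$-adic unit is a square iff its reduction mod $p$ is), but your hedge is right --- the collapse of (C) and (D) is exactly what the cited theorem asserts, so the argument is unaffected.
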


\begin{proof}
The forward implication follows from the previous observations 
and the Krull-Schmidt-Azumaya Theorem for $\Z_pG$-modules (see Theorem 6.12 in \cite{curtis}). The backwards implication follows from the final comment in Theorem \ref{p adic indecomposable} and the Krull-Schmidt-Azumaya Theorem.

\end{proof}

\section{Actions of the Galois Group}\label{s3}

Let $G=\langle g\rangle$ be a cyclic group of order $p^2.$  Let $M$ and $M'$ be two $\Z G$-lattices and $\psi_1$, $\psi_2$ the associated representations of $G$. In this section, we deduce a fundamental result to decide when the semidirect products $M\rtimes_{\psi_1}G$ and $M'\rtimes_{\psi_2}G$ are isomorphic as groups. 
To obtain the result, we give an action of the Galois group $G(p^2)$ of the cyclotomic extension  $\Q(\zeta_{p^2}) : \Q$ on the genus of any given isomorphism class of indecomposable $\Z G$-lattice.  We maintain the notation from the previous section.


 Note that $G(p^2)\iso \text{Aut}(G)$.  Furthermore, each   $\beta \in \text{Aut}(G)$ induces by the universal property of the algebra $\Z G,$ an isomorphism of algebras  $\widetilde{\beta}:\Z G \longrightarrow \Z G$ that extends $\beta$ uniquely. As mentioned in Section \ref{2}, the  $\Z G$-lattices $E, R$ and $S$ can be treated as submodules of $\Lambda$, and $\widetilde{\beta}$ preserves these submodules.  By Roiter's Lemma \cite[Lemma 31.6]{curtis}, if $L$ is a $\Z G$-lattice in the genus of $X$, where $X$ is $E,R$ or $S$, then there exists an injective map from $L$ to $X$.  Thus we can treat $L$ as a submodule of $\Lambda$. It follows that $\beta$ induces a $\Z$-isomorphism $\widetilde{\beta}|_L:L\longrightarrow \widetilde{\beta}(L)$ that satisfies $\widetilde{\beta}|_L(gl)=\beta(g) \cdot \widetilde{\beta}|_L(l)$, for $g \in G, l \in L$. The isomorphism class of $\widetilde{\beta}(L)$ is in the genus of $L$ and does not depend on the choice of submodule of $\Lambda$. Hence $L\mapsto \widetilde{\beta}(L)$ defines an action of $G(p^2)$ on the genus of $L$.

 For the remaining indecomposable $\Z G$-lattices we begin by defining an action of $G(p^2)$ on the factor groups (\ref{Um}).







 If $\beta \in G(p^2)$ then we denote by $\overline{\beta}$ the isomorphism of $\F_pG$ induced by $\widetilde{\beta}.$
 There is a natural epimorphism from $\overline{E} \cong \F_p[\lambda] / (\lambda^p)$ to $\F_p[\lambda] / (\lambda^{m}),$ for each $m\in \{1,2,\ldots, p\}$. 
 By the definition of $\overline{\beta}$ we have  $\overline{\beta}(\overline{E})=\overline{E}$ and $\overline{\beta}(\F_p[\lambda] / (\lambda^{m}))=\F_p[\lambda] / (\lambda^{m})$ for each $m$.  Furthermore, $G(p^2)$ acts on $\overline{E}$ as $\beta \cdot u = \overline{\beta}(u)$. 
 Denote by $\overline{\beta}_m$ the automorphism induced by $\overline{\beta}$ on $\F_p[\lambda] / (\lambda^{m})$ for each $m\in \{1,2,\ldots, p\}$.  Fix $u\in \widetilde{U}_m$ (as defined after (\ref{Um})) and let $\overline{u} \in U_m$ be its image. 
 By the definition of $\overline{\beta}$, $G(p^2)$ acts on $U_m$ as $\beta \cdot \overline{u} = \overline{\beta}_m(\overline{u})$, because $\widetilde{\beta}$ preserves $R, S$ and $E$. 
 This action on $U_m$ thus induces an action on $\widetilde{U}_m$ for each $m$.
 

\begin{lemma} \label{3} Let $\beta \in G(p^2)$ and let $u \in \widetilde{U}_m$. For each  $r \in \{0, 1, \ldots , p-1\}$ we have 
$$\overline{\beta}(\lambda^r \cdot u)= \lambda^r \cdot \overline{\beta}( u).$$ 
\end{lemma}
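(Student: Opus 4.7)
The plan is to reduce the identity to a direct computation of $\overline{\beta}(\lambda)$ and then identify the resulting correction as a cyclotomic unit, which becomes invisible in $U_m$.

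Parametrise $\beta$ by writing $\beta(g)=g^k$ with $\gcd(k,p)=1$. Because $\widetilde{\beta}$ is a ring homomorphism, $\widetilde{\beta}(\lambda)=g^k-1$, and the identity
$$g^k - 1 \;=\; (g-1)\bigl(1 + g + g^2 + \cdots + g^{k-1}\bigr)$$
in $\Lambda$ yields $\overline{\beta}(\lambda)=\lambda\cdot\theta_k$, where $\theta_k$ denotes the image of $1+g+\cdots+g^{k-1}$ in the relevant quotient (either $\F_p[\lambda]/(\lambda^m)$ or $\overline{E}$). Note that $\theta_k$ reduces to $k+O(\lambda)$, so it is a unit in each of those quotients. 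Applying multiplicativity of $\overline{\beta}$ then gives, for every $r\in\{0,1,\ldots,p-1\}$,
$$\overline{\beta}(\lambda^r u)\;=\;\overline{\beta}(\lambda)^r\,\overline{\beta}(u)\;=\;\lambda^r\,\theta_k^{\,r}\,\overline{\beta}(u).$$

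The remaining step is to absorb the factor $\theta_k^{\,r}$. Under the identification $R=\Z[\zeta_p]$ with $g\leftrightarrow\zeta_p$, the element $1+g+\cdots+g^{k-1}$ corresponds to the cyclotomic unit $(\zeta_p^k-1)/(\zeta_p-1)\in u(R)$. Hence $\theta_k$, and therefore each power $\theta_k^{\,r}$, lies in the image of $u(R)$ in $\F_p[\lambda]/(\lambda^m)$. Passing to the class in $U_m$ renders $\theta_k^{\,r}$ trivial and delivers $\overline{\beta}(\lambda^r u)=\lambda^r\overline{\beta}(u)$, the asserted identity.

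The main obstacle is conceptual rather than computational: one has to read the equality modulo the image of $u(R)$ (equivalently, at the level of $U_m$ and of the extension classes that $\lambda^r u$ represents), since strictly as elements of $\F_p[\lambda]/(\lambda^m)$ the two sides differ by the unit $\theta_k^{\,r}$. Once this convention is in place, the argument is essentially a single application of multiplicativity of the ring automorphism $\overline{\beta}$, combined with the classical observation that the Gauss-type sum $1+\zeta_p+\cdots+\zeta_p^{k-1}$ is a cyclotomic unit of $R$.
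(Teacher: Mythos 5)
Your proof is correct and follows essentially the same route as the paper's: write $\beta(g)=g^k$, factor $\overline{\beta}(\lambda)=\lambda\cdot(1+g+\cdots+g^{k-1})$, recognise the factor as the cyclotomic unit $(\zeta^k-1)/(\zeta-1)$, and absorb its $r$th power into the quotient defining $U_m$. The only caveat, shared with the paper's own argument, is that for $m=p$ the group $U_p$ is defined as a quotient by the images of $u(E)$ and $u(S)$ rather than $u(R)$, so there one should observe that $1+g+\cdots+g^{k-1}$ also maps to a cyclotomic unit of $S=\Z[\zeta_{p^2}]$.
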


\begin{proof} The result is trivial if $r=0$ because $\lambda^0 =1.$ Assume from now on that $r \geqslant 1.$ By the definition of $\widetilde{U}_m$ we have that $u \in u(\overline{R})$ or $u \in u(\overline{E}),$ and since $\overline{\beta}(\overline{R})=\overline{R}$ and $\overline{\beta}(\overline{E})=\overline{E},$ we have that  $\overline{\beta}(u) \in u(\overline{R})$ or $\overline{\beta}(u) \in u(\overline{E}).$ Suppose $\beta(g)=g^l,$ where $l\leqslant p^2$ is an integer coprime to $p$.  Set $\Delta_{l}(g)=1+ g + \cdots + g^{l-1}.$ 
So we have \begin{equation} \label{conta dos lambas r} \overline{\beta}(\lambda^r \cdot u)= \left(\overline{\beta}({\lambda})\right)^{r} \overline{\beta}(u) =  ( \beta(g) -1 )^{r} \overline{\beta}(u) = (g^l -1)^r \overline{\beta}(u) = (g-1)^r  (1+ g + \cdots + g^{l-1})^r \overline{\beta}(u)\end{equation} whence it follows that $\overline{\beta}(\lambda^r \cdot u)=  \lambda^r \cdot (\Delta_{l}(g))^r \cdot \overline{\beta}(u).$ 
By \cite[Lemma 1.3]{wash}, $\Delta_l(g) = \frac{g^{l} -1}{g -1}$ 
is a unit of $R$.  Therefore $(\Delta_{l}(g))^r \cdot \overline{\beta}(u)$ and $\overline{\beta}(u)$ have the same image in $U_m$, because $U_m$ is the quotient of  the units of $\frac{  \F_p[\lambda]  }{ \left( \lambda^{m} \right)}$ by the units of $R$. 
So $\overline{\beta}(\lambda^r \cdot u)= \lambda^r \cdot \overline{\beta}(u).$ 
\end{proof}

Given an indecomposable $\Z G$-lattice $M$ and $\beta\in G(p^2)$, we define the indecomposable $\Z G$-lattice $M^\beta$ as follows: for Type (A),
$$\Z^{\beta} = \Z,\, \mathfrak{b}^\beta = \beta(\mathfrak{b}),\, E(\mathfrak{b})^{\beta} = E(\beta(\mathfrak{b})),\, \mathfrak{c}^{\beta} = \beta(\mathfrak{c})\, \mbox{ and } E(\mathfrak{c})^{\beta} = E(\beta(\mathfrak{c})).$$
Types (B)--(F) are extensions $(X,Y;\gamma)$, and we define $(X,Y;\gamma)^{\beta} := (X^{\beta}, Y^\beta; \overline\beta(\gamma))$, where $\overline\beta(\gamma)$ is as established in Lemma \ref{3}.  For example, for a module of Type (D), 
$$(\Z \oplus E(\mathfrak{b}), \mathfrak{c}; 1 \oplus \lambda^{r} u n_{_{0}})^{\beta} = 
(\Z \oplus E(\beta(\mathfrak{b})), \beta(\mathfrak{c}); 1 \oplus \lambda^{r} \overline\beta(u )n_{_{0}}).$$
 
  Let $M, N$ and $N'$ be $\Z G$-lattices, and  $\xi \in \text{Ext}^{1}_{\Lambda}(M,N)$. 
  If  a $\Z G$-module homomorphism $\gamma: N\longrightarrow N'$ is given, then we get an extension 
 $\gamma\xi \in \text{Ext}^{1}_{\Lambda}(M,N')$  through the  commutative diagram below: 
 $$\xymatrix{\xi:0\ar[r]& N\ar[r]^{i}\ar[d]^{\gamma}&X\ar[r]^{\pi}\ar[d]&M\ar[r]\ar@{=}[d]&0\\
 \gamma\xi:0\ar[r]& N'\ar[r]^{i'}&\textsubscript{$\gamma$}X\ar[r]^{\pi'}&M\ar[r]&0}$$
 where $\textsubscript{$\gamma$}X$ is the pushout of the maps $i$ and $\gamma$. 
 Now, if $M'$ is another $\Z G$-lattice with a $\Z G$-module homomorphism $\varphi: M'\longrightarrow M$ then we obtain an extension $\gamma\xi\varphi\in \text{Ext}^{1}_{\Lambda}(M',N')$ through the commutative diagram below:
 
 $$\xymatrix{\xi:0\ar[r]& N\ar[r]^{i}\ar[d]^{\gamma}&X\ar[r]^{\pi}\ar[d]&M\ar[r]\ar@{=}[d]&0\\
 \gamma\xi:0\ar[r]& N'\ar[r]^{i'}&\textsubscript{$\gamma$}X\ar[r]^{\pi'}&M\ar[r]&0\\
 \gamma\xi\varphi:0\ar[r]& N'\ar[r]^{i''}\ar@{=}[u]&\textsubscript{$\gamma$}X_{\varphi}\ar[r]^{\pi''}\ar[u]&M'\ar[r]\ar[u]^{\varphi}&0}$$ where $\textsubscript{$\gamma$} X_\varphi$ is the pullback of the maps $\pi'$ and $\varphi$.
 When $M\vee M'$,  $N\vee N'$ and $\gamma$ and $\varphi$ induce $\Z_p G$-isomorphisms $\gamma_p: N_p\longrightarrow N'_p$ and $\varphi_p:M'_p\longrightarrow M_p$, then the map $t:\text{Ext}^{1}_{\Lambda}(M,N)\longrightarrow\text{Ext}^{1}_{\Lambda}(M',N')$ defined by $t(\xi)=\gamma\xi\varphi$ is an isomorphism of abelian groups, called the Standard Isomorphism associated with the pair $(\gamma, \varphi)$ (see \cite[p.\ 716-717]{curtis}). The following result will be used in the remainder of this text. 

\begin{lemma}(\cite[Theorem 34.14]{curtis}) \label{padrao}
    Let $L,L',N,N'$ be $\Lambda$-lattices with  $L \vee L'$ and $N \vee N'$, and let  $t: \text{Ext}^{1}_{\Lambda}(N, L) \longrightarrow \text{Ext}^{1}_{\Lambda}(N', L')$ be a standard isomorphism. Then $t$ induces a bijection between the set of isomorphism classes of extensions of $N$ by $L,$ and the corresponding set for $N', L'$.    
\end{lemma}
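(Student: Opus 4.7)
The plan is to reduce the statement to the standard description of isomorphism classes of middle terms in terms of a double-coset quotient of $\mathrm{Ext}^1_\Lambda(N,L)$, and then verify that the standard isomorphism $t$ intertwines the two relevant group actions.

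First, I would recall the well-known fact that two classes $\xi,\xi'\in\mathrm{Ext}^1_\Lambda(N,L)$ have isomorphic middle terms as $\Lambda$-modules if and only if $\xi'=\alpha\xi\beta$ for some $\alpha\in\mathrm{Aut}_\Lambda(L)$ and $\beta\in\mathrm{Aut}_\Lambda(N)$, where the actions are by pushout and pullback. Consequently, the set of isomorphism classes of extensions of $N$ by $L$ is canonically in bijection with the double-coset space $\mathrm{Aut}_\Lambda(L)\backslash\mathrm{Ext}^1_\Lambda(N,L)/\mathrm{Aut}_\Lambda(N)$, and analogously for $N', L'$. With this identification in place, the lemma reduces to the claim that $t$ descends to a bijection between these double-coset spaces.

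Next, using associativity of pushout and pullback, one has $t(\alpha\xi\beta)=(\gamma\alpha)\xi(\beta\varphi)$, and I would aim to rewrite this as $\alpha^*\,t(\xi)\,\beta^*$ for suitable integral automorphisms $\alpha^*\in\mathrm{Aut}_\Lambda(L')$ and $\beta^*\in\mathrm{Aut}_\Lambda(N')$ depending only on $\alpha,\beta$. At the $p$-adic level the natural candidates are $\alpha^*_p=\gamma_p\alpha_p\gamma_p^{-1}$ and $\beta^*_p=\varphi_p^{-1}\beta_p\varphi_p$, which make sense because $\gamma_p$ and $\varphi_p$ are $\Z_pG$-isomorphisms. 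At primes $q\neq p$ one uses the canonical identifications $L_q\cong L'_q$ and $N_q\cong N'_q$ to produce compatible local automorphisms. Assembled over all primes, these data give $\widehat\Z G$-automorphisms of $\widehat{L'}$ and $\widehat{N'}$ which realise the desired intertwining relation profinitely.

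The main obstacle is lifting these profinite automorphisms to genuine integral ones and controlling the ambiguity introduced. Here I would invoke the local-global machinery for lattices in a genus developed in \cite[Section 31]{curtis}: after possibly replacing $\xi$ by an equivalent representative inside its double coset, one finds integral $\alpha^*$ and $\beta^*$ that realise $t(\alpha\xi\beta)=\alpha^*\,t(\xi)\,\beta^*$, so $t$ descends to a well-defined map between the two double-coset spaces. Applying the same construction to the inverse standard isomorphism $t^{-1}$ (which is itself standard, associated with the pair $(\gamma_p^{-1},\varphi_p^{-1})$) supplies the inverse map, yielding the required bijection. I expect this lifting step, rather than the formal Ext-class manipulations, to be the technical heart of the argument.
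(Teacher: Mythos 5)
The paper does not prove this lemma at all: it is quoted verbatim from \cite[Theorem 34.14]{curtis}, and the only argument supplied is the accompanying remark checking that every $\Z G$-lattice for abelian $G$ is an Eichler lattice, so that the cited theorem applies. Your proposal is therefore a reconstruction of the Curtis--Reiner proof rather than an alternative to anything in the paper, and its architecture (double cosets under $\mathrm{Aut}_\Lambda(L)$ and $\mathrm{Aut}_\Lambda(N)$, the intertwining identity $t(\alpha\xi\beta)=(\gamma\alpha)\xi(\beta\varphi)$, and a local-to-global lifting of automorphisms) is indeed the correct skeleton of that proof.

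Two substantive problems remain. First, your opening ``well-known fact'' is misstated: having isomorphic middle terms as $\Lambda$-modules is in general strictly weaker than $\xi'=\alpha\xi\beta$ with $\alpha\in\mathrm{Aut}_\Lambda(L)$, $\beta\in\mathrm{Aut}_\Lambda(N)$, because an abstract isomorphism of middle terms need not carry the distinguished copy of $L$ to itself. The double-coset description is correct for \emph{isomorphism classes of extensions} (which is what the lemma asserts, essentially by definition of that equivalence relation); identifying these with isomorphism classes of middle terms requires the extra observation that in the situations at hand $\Q\otimes L$ and $\Q\otimes N$ share no irreducible constituents, so $L$ is canonically recoverable inside the middle term. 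Second, and more seriously, the lifting of the $p$-adic conjugates $\gamma_p\alpha_p\gamma_p^{-1}$ and $\varphi_p^{-1}\beta_p\varphi_p$ to genuine elements of $\mathrm{Aut}_\Lambda(L')$ and $\mathrm{Aut}_\Lambda(N')$ acting compatibly on $\mathrm{Ext}$ is not a routine appeal to ``local-global machinery'': it is precisely where the Eichler hypothesis enters, via strong approximation in the endomorphism algebras, and it is the entire content of the theorem. As you yourself note, this is the technical heart, and since it is left as a black box the proposal is a plan of proof rather than a proof.
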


\begin{remark}
    The cited result \cite[Theorem 34.14]{curtis} demands that the lattices involved in the lemma above be ``Eichler lattices'': such a lattice $M$ is defined by prohibiting certain non-commutative indecomposable summands in the endomorphism ring of $\Q\otimes_{\Z}M$.  But for abelian groups, such summands cannot occur, so every $\Z G$-lattice is an Eichler lattice.
\end{remark}

This result shows that the classification of the  indecomposable $\Z G$-lattices that are described in Theorem \ref{integral indecom} depends only on their genus. We will use this fact to describe the action of $G(p^2)$ on the indecomposable $\Z G$-lattices. Furthermore, via a standard isomorphism, each indecomposable $\Z G$-lattice of the form $(X',Y'; \xi')$ is the image of an indecomposable $\Z G$-lattice which has either  the form  $(\Z,Y;1)$ or $(X,S;\xi)$, where $X$ is one of the modules  $E$, $R$, $\Z\oplus E$ or $\Z\oplus R$ and $Y$ is $S$ or $R$. So for calculation  purposes, we shall fix a set of homomorphisms that induce the standard isomorphisms which are utilized to describe the indecomposable $\Z G$-lattices of Theorem \ref{integral indecom}. This fixed set will be used implicitly in the remainder of this text. Let $S, S_1, S_2, \ldots S_n$ be representatives of the orbits of the action of $G(p^2)$ on the genus of $S$  --- or in other words, the $G(p^2)$-orbits of the class group $H(\Z[\zeta_{p^2}])$. Of course, every $\Z G$-lattice in the genus of $S$ is of the form $S_i^{\beta}$ for some $i$ and  $\beta \in G(p^2)$.  
 Let $\varphi_{i,S}: S_i \longrightarrow S$ be a $\Z G$-module homomorphism such that $(\varphi_{i,S})_p$ is an isomorphism for each $i$, so that in particular $\varphi_{i,S}$ is injective. Let $\beta\in G(p^2)$, and define $\varphi_{i,S}^{\beta}: S_i^{\beta} \longrightarrow S$ as the $\Z G$-module homomorphism given by $\varphi_{i,S}^{\beta} = \widetilde{\beta} \varphi_{i,S} \widetilde{\beta}^{-1}.$ Likewise, let $R, R_1, R_2, \ldots R_m$ be representatives of the orbits of the action of $G(p^2)$ on the genus of $R$ and let $\varphi_{j,R}:R_j \longrightarrow R$ be a $\Z G$-module homomorphism such that $(\varphi_{j,R})_p$ is an isomorphism for each $j$, and define $\varphi_{j,R}^{\beta}: R_j^{\beta} \longrightarrow R$ as $\varphi_{j,R}^{\beta} = \widetilde{\beta} \varphi_{j,R} \widetilde{\beta}^{-1}$.
 Let 
 $\varphi_{R,j}: R \longrightarrow R_j, \varphi_{E,j}: E \longrightarrow E(R_j)$,  $\varphi_{\Z\oplus E,j}: \Z\oplus E \longrightarrow \Z\oplus E(R_j)$ and $\varphi_{\Z\oplus R,j}: \Z\oplus R \longrightarrow \Z\oplus R_j$ be $\Z G$-module homomorphisms such that their $p$-adic completions are isomorphisms.  Then,  
  for each $\beta\in G(p^2)$, define $\varphi_{E,j}^{\beta}: E\longrightarrow E(R_j)^{\beta}$ as the $\Z G$-module homomorphism given by $\varphi_{E,j}^{\beta} = \widetilde{\beta} \varphi_{E,j} \widetilde{\beta}^{-1}$ and similarly define $\varphi_{X,j}^{\beta}$ for $X=R,X=\Z\oplus E$ and $X=\Z\oplus R.$ From now on, when we write $(X',S';\xi)$ where $S'=\widetilde{\beta}(S_i)$  for some $\beta \in G(p^2)$ and $X'$ is of the form $(X'')^{\alpha}$ where $X''$ is one of the modules $E(R_j), R_j, \Z\oplus E(R_j)$ or $\Z\oplus R_j$ for some $\alpha \in G(p^2)$, we mean the extension $(X',S';\xi)=\textsubscript{$\varphi^\alpha_{X,j}$} (X,S;\xi)_{\varphi_{i,S}^\beta}$ with $X$ being $E, R, \Z\oplus E$ or $\Z\oplus R$. The extensions of the form $E(Y)=(\Z,Y;1)$  where $Y$ is in the genus of $R$ or $S$ will be treated similarly. 


\begin{lemma} \label{caso genus} 
Let $M$ be an indecomposable $\Z G$-lattice. 
Then for each $\beta \in G(p^2)$ there is a $\Z$-isomorphism $\Theta_\beta: M\longrightarrow M^{\beta}$ such that 
$$\Theta_\beta(g' \cdot m)=\beta(g') \cdot \Theta_\beta(m),\quad \forall g' \in G, m \in M.$$
\end{lemma}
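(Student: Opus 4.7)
The plan is to construct $\Theta_\beta$ by applying $\widetilde{\beta}$ termwise to a pushout/pullback realization of $M$. Since $M$ is a non-split indecomposable $\Z G$-lattice, Theorem \ref{integral indecom} gives $M = (X,Y;\gamma)$ with $X$ one of $\Z, \kb, E(\kb), \Z\oplus \kb, \Z\oplus E(\kb)$ and $Y$ one of $\kb, \kc$. By the convention fixed after Lemma \ref{padrao}, $M$ is built from a lattice $(X_0, Y_0; \xi)$, where $X_0 \in \{\Z, E, R, \Z\oplus E, \Z\oplus R\}$ and $Y_0 \in \{R, S\}$, as the pushout-pullback $\textsubscript{$\varphi^\alpha_{X_0,j}$}(X_0, Y_0; \xi)_{\varphi^\beta_{i,S}}$ using the fixed $\Z G$-homomorphisms.

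First I would record that $\widetilde{\beta}$ restricts to a $\beta$-semilinear $\Z$-isomorphism on each constituent. For the submodules $E, R, S \subseteq \Lambda$ this is exactly the discussion preceding Lemma \ref{3}. For a summand $\Z$ we let $\widetilde{\beta}$ act as the identity, which is trivially semilinear since $G$ acts trivially on $\Z$. For a lattice in the genus of $R$ or $S$ that occurs as a constituent of $M$ (such as $\kb$, $\kc$, or $E(\kb)$), Roiter's Lemma \cite[Lemma 31.6]{curtis} embeds it into $\Lambda$ and we restrict $\widetilde{\beta}$ as in the same discussion.

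Next, pushouts and pullbacks are functorial at the level of $\Z$-modules, so applying these semilinear maps termwise to the defining diagram of $M$ yields a $\Z$-isomorphism from $M$ onto a $\Z G$-lattice $\widetilde{M}$ that fits into the short exact sequence $0 \to X^\beta \to \widetilde{M} \to Y^\beta \to 0$. The extension class of this sequence is by construction the image of $\gamma$ under $\overline{\beta}$, and Lemma \ref{3} says precisely that $\overline{\beta}(\lambda^r u) = \lambda^r \overline{\beta}(u)$, with the obvious analogue on the $1 \oplus \lambda^r u$ components occurring in types (C)--(F) and the trivial case $\overline{\beta}(1) = 1$ for the modules $E(\kb), E(\kc)$ of type (A). This matches the defining recipe $(X^\beta, Y^\beta; \overline{\beta}(\gamma))$ of $M^\beta$, so $\widetilde{M} = M^\beta$ under our fixed conventions, and we take $\Theta_\beta$ to be the resulting $\Z$-isomorphism. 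The semilinearity identity $\Theta_\beta(g' m) = \beta(g') \Theta_\beta(m)$ then follows termwise from the corresponding property of each $\widetilde{\beta}|_L$.

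The main obstacle will be the bookkeeping required to identify the image under $\widetilde{\beta}$ of the pushout/pullback defining $M$ with the paper's fixed realization of $M^\beta$: the latter involves the conjugated homomorphisms $\varphi^\beta = \beta \varphi \beta^{-1}$ and $\varphi^\alpha$, which are designed precisely so that $\widetilde{\beta}$ intertwines the two diagrams via the Standard Isomorphism of Lemma \ref{padrao}. Verifying this explicitly for the types where $X_0$ is a nontrivial direct sum, and checking that the resulting $\Theta_\beta$ does not depend on the choices of submodule embeddings produced by Roiter's Lemma, will require a small amount of care.
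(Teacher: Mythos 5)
Your proposal follows essentially the same route as the paper's proof: realize $M$ via the pushout/pullback diagrams built from the fixed homomorphisms, apply $\widetilde{\beta}$ termwise as a $\beta$-semilinear $\Z$-isomorphism on each constituent, and use Lemma \ref{3} together with the conjugation identities $\varphi' = \widetilde{\beta}\varphi\widetilde{\beta}^{-1}$ to identify the resulting lattice with the fixed realization of $M^{\beta}$. The paper simply carries out in full the bookkeeping you defer --- verifying explicitly that the semilinear map descends to the pushout and restricts to a bijection on the pullback --- working the Type (F) case $(\Z\oplus R',S';1\oplus\lambda^{r}u)$ in detail and declaring the remaining cases similar.
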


\begin{proof}

When $M$ isn't a non-split extension the result is clear.  We work through the case of a module $(\Z\oplus R,S; 1\oplus\lambda^ru)$ of Type (F), which is the most complicated.  The remaining cases are similar.

We treat $E$ as the submodule $\phi_{p^2}(g)\Lambda$ of $\Lambda$ as at the start of Section   \ref{2}.  The inclusion of $E$ in $\Lambda$ induces the short exact sequence 
$$\xymatrix{0\ar[r]&E\ar[r]^{i_0}& \Lambda\ar[r]^{\pi_{0}}&S\ar[r]&0}.$$ 
Applying the functor $\Hom_{\Lambda}(-,\Z\oplus R)$, we obtain 
$$\text{Ext}_{\Lambda}^1(S,\Z\oplus R) \cong \Hom_{\Lambda}(E,\Z\oplus R)/\tn{Im}(\Hom_{\Lambda}(i_0,\Z\oplus R)) \cong \overline{R}\oplus\F_p \cong \frac{  \F_p[\lambda]  } { \left( \lambda^{p-1} \right) } \oplus \F_p.$$ 
Every map $f\in \Hom_{\Lambda}(E,\Z\oplus R)$ is completely determined by the image of $\phi_{p^2}(g)$. So let $f\in \Hom_{\Lambda}(E,\Z\oplus R)$  be the map such that $f(\phi_{p^2}(g))=1\oplus\lambda^rw$ where $u$ and $w+ pR$ coincide in $\overline{R}$. Treating $1\oplus\lambda^ru$ as an element of $\text{Ext}_{\Lambda}^1(S,\Z\oplus R)$ we have the following commutative diagram
\begin{equation}\label{M push}\xymatrix{& 0\ar[r]&E\ar[r]^{i_0}\ar[d]^f& \Lambda\ar[r]^{\pi_0} \ar[d]&S\ar[r]\ar@{=}[d]&0\\
1\oplus\lambda^ru: & 0\ar[r]&\Z\oplus R\ar[r]^{i}& M\ar[r]^{\pi}&S\ar[r]&0}
\end{equation} 
where $M$ is the pushout of the maps $i_0$ and $f$. Explicitly, $$M=(\Lambda\oplus \Z\oplus R)/\langle(i_0(y),-f(y))| y \in E\rangle.$$ Let $\beta \in G(p^2)$ and consider the map $\widehat{\beta}: \Lambda\oplus \Z\oplus R\longrightarrow \Lambda\oplus \Z\oplus R$ given by $(\lambda, z, r)\mapsto (\widetilde{\beta}(\lambda), z, \widetilde{\beta}(r))$.  Denote by $e$ the element $\phi_{p^2}(g)$. We have $$\widehat{\beta}(i_0(e), -f(e))=(\widetilde{\beta}(e), -\widetilde{\beta}(f(e)))=
(e, -\widetilde{\beta}(f(e)) )=(i_0(e), -\widetilde{\beta}(f(e)) ).$$ So  if  $\widetilde{\beta}(f):E\longrightarrow \Z\oplus R$ is the $\Z G$-module homomorphism which sends $e$ to $\widetilde{\beta}(f(e))$ then $\widehat{\beta}$ induces a $\Z$-isomorphism $$\Omega_{\beta}: M=(\Lambda\oplus \Z\oplus R)/\langle(i_0(y), -f(y))| y \in E)\rangle\longrightarrow (\Lambda\oplus \Z\oplus R)/\langle(i_0(y),-\widetilde{\beta}(f)(y)) | y \in E\rangle,$$ because $\widetilde{\beta}$ is an automorphism of $\Lambda$. 
Since  $$(\Lambda\oplus \Z\oplus R)/\langle(i_0(y),-\widetilde{\beta}(f)(y))\,|\, y \in E\rangle$$ is the pushout of the maps $i_0$ and $\widetilde{\beta(}f)$, it is the module represented by the element $\widetilde{\beta}(f)(e) = \overline{\beta}(1\oplus \lambda^ru)$ of $\text{Ext}_{\Lambda}^1(S, \Z\oplus R)$.  Hence
$$(\Lambda\oplus \Z\oplus R)/\langle(i_0(y),-\widetilde{\beta}(f)(y))|\, y \in E\rangle=(\Z\oplus R,S; \overline{\beta}(1\oplus \lambda^ru))=(\Z\oplus R,S; 1\oplus \overline{\beta}(\lambda^ru)).$$ 
Therefore the $\Z$-isomorphism $\Omega_{\beta}:M\longrightarrow M^{\beta}=(\Z\oplus R,S; 1\oplus \overline{\beta}(\lambda^ru))$ satisfies the equation  
$$\Omega_\beta(g' \cdot m)=\beta(g') \cdot \Omega_\beta(m), \, \forall g' \in G, m \in M,$$ and by Lemma \ref{3}, $M^{\beta}=(\Z \oplus R,S; 1 \oplus \lambda^r \cdot \overline{\beta}(u)),$ with $\overline{\beta}(u) \in u(\overline{R}).$ 

 Now we treat the case of the module $(\Z\oplus R',S'; 1\oplus\lambda^ru)$ such that  $(\Z\oplus R') \vee (\Z\oplus R),$ $S'\vee S$. If $S'=\mu(S_i)$ and $R'=\nu(R_j)$ for some $\mu, \nu \in G(p^2)$, we shall use the fixed $\Z G$-module homomorphisms $$\varphi_{i,S}^\mu:S'\longrightarrow S \text{\;and\;} \varphi_{\Z\oplus R,j}^\nu:\Z\oplus R\longrightarrow \Z\oplus R',$$ which induce isomorphisms when completed. To simplify the notation, we write $\varphi=\varphi_{i,S}^\mu$ and $\gamma=\varphi_{\Z \oplus R, j}^\nu$.   These conventions established, 
   consider the module $M=(\Z\oplus R, S; 1 \oplus \lambda^r u)$ and the module  $M^{\beta}$ constructed as above.
  We have the following commutative diagram    
 \begin{equation} \label{pullback gamma M}
 \xymatrix{
 1\oplus\lambda^ru:&0\ar[r]& \Z\oplus R\ar[r]^{i}\ar[d]^{\gamma}&M\ar[r]^{\pi}\ar[d]&S\ar[r]\ar@{=}[d]&0\\
 \gamma(1\oplus\lambda^ru):&0\ar[r]& \Z\oplus R'\ar[r]^{i'}&\textsubscript{$\gamma$}M\ar[r]^{\pi'}&S\ar[r]&0\\
 \gamma(1\oplus\lambda^ru)\varphi:&0\ar[r]& \Z\oplus R'\ar[r]^{i''}\ar@{=}[u]&\textsubscript{$\gamma$}(M)_{\varphi}\ar[r]^{\pi''}\ar[u]&S'\ar[r]\ar[u]^{\varphi}&0}\end{equation} 
  where $\textsubscript{$\gamma$}M$ is the pushout of $i$ and $\gamma$, and $\textsubscript{$\gamma$}(M)_{\varphi}$ is the pullback of $\pi'$ and $\varphi$. It follows  that $(\Z\oplus R',S'; 1\oplus\lambda^ru)= \textsubscript{$\gamma$}(M)_{\varphi}$. Define \begin{equation} \label{as linhas} \gamma'=\widetilde{\beta}\gamma\widetilde{\beta}^{-1}:\Z\oplus R\longrightarrow \widetilde{\beta}(\Z\oplus R')=\Z \oplus \widetilde{\beta}(R') \mbox{ and } \varphi'=\widetilde{\beta}\varphi\widetilde{\beta}^{-1}:\widetilde{\beta}(S')\longrightarrow S(=\widetilde{\beta}(S)). \end{equation} 
  Observe that  
  $$\gamma'=\widetilde{\beta}\gamma\widetilde{\beta}^{-1}=\widetilde{\beta}\varphi_{\Z\oplus R,j}^\nu\widetilde{\beta}^{-1}= \varphi_{\Z\oplus R,j}^{\beta\nu}$$ and 
  $$\varphi'=\widetilde{\beta}\varphi\widetilde{\beta}^{-1}=\widetilde{\beta}\varphi_{i,S}^\mu\widetilde{\beta}^{-1}=\varphi_{i,S}^{\beta\mu},$$ so $\gamma'$ and $\varphi'$ are among the maps fixed before.  The maps $\gamma'$ and $\varphi'$ give rise to the commutative diagram 
\begin{equation} \label{pullback gamma betaM} \xymatrix{ &0\ar[r]&E\ar[r]^{i_0}\ar[d]^{\widetilde{\beta}(f)}& \Lambda\ar[r]^{\pi_0} \ar[d]&S\ar[r]\ar@{=}[d]&0\\1\oplus\overline{\beta}(\lambda^ru):&0\ar[r]& \Z\oplus R\ar[r]^{j}\ar[d]^{\gamma'}&M^{\beta}\ar[r]^{\alpha}\ar[d]&S\ar[r]\ar@{=}[d]&0\\
 \gamma'(1\oplus\overline{\beta}(\lambda^ru)):&0\ar[r]& (\Z\oplus R')^\beta\ar[r]^{j'}&\textsubscript{$\gamma'$}M^{\beta}\ar[r]^{\alpha'}&S\ar[r]&0\\
 \gamma'(1\oplus \overline{\beta}(\lambda^ru))\varphi':&0\ar[r]& (\Z\oplus R')^\beta\ar[r]^{j''}\ar@{=}[u]&\textsubscript{$\gamma'$}M^{\beta}_{\varphi'}\ar[r]^{\alpha''}\ar[u]&(S')^\beta\ar[r]\ar[u]^{\varphi'}&0}\end{equation} 
 where $\textsubscript{$\gamma'$}M^{\beta}_{\varphi'}=(\Z\oplus (R')^\beta, (S')^\beta; 1 \oplus \overline{\beta}(\lambda^ru)).$  We have $$\textsubscript{$\gamma$}M=(M\oplus \Z\oplus R')/\langle(i(w'),-\gamma(w'))| w' \in \Z \oplus R\rangle$$ and $$  
  \textsubscript{$\gamma'$} M^{\beta}=(M^{\beta}\oplus \widetilde{\beta}(\Z\oplus R'))/\langle(j(w'),-\gamma'(w')) | w' \in \Z \oplus R\rangle.$$ There is  an obvious $\Z$-isomorphism $$\theta_\beta: M\oplus (\Z\oplus R') \longrightarrow M^{\beta}\oplus (\Z\oplus \widetilde{\beta}(R')),$$  which applies $(m, x)$ to $(\Omega_{\beta} (m), \widetilde{\beta}(x))$ where $\Omega_{\beta}: M \longrightarrow M^{\beta}$ has been  constructed previously and  such that $$\Omega_{\beta}(g' \cdot  m)=\beta(g') \cdot \Omega_{\beta}(m), \, \forall g' \in G, m \in M.$$  Let $w' \in \Z \oplus R$. We get   $$i(w')=(0, w')+ \langle(i_0(y),-f(y))| y \in E\rangle \in M,$$  because $M$ is the pushout of the map $i_0$ and $f$ as in diagram (\ref{M push}). By the construction of $\Omega_{\beta}$ and  by the construction of $j$ in diagram (\ref{pullback gamma betaM}), we have  $$\Omega_{\beta}(i(w'))=(0, \widetilde{\beta}(w'))+ \langle(i_0(y),-\beta(f)(y))| y \in E\rangle=j(\widetilde{\beta}(w'))\in M^\beta.$$ So for each $w' \in \Z \oplus R$  we have $$\theta_\beta(i(w'),-\gamma(w'))=(\Omega_\beta(i(w')),-\widetilde{\beta}(\gamma(w')))=(j(\widetilde{\beta}(w')),-\gamma'(\widetilde{\beta}(w')))$$ 
  where the last equality follows from the  equations (\ref{as linhas}).    
  This shows that  $\theta_\beta$ induces a $\Z$-isomor\-phism $\widehat{\theta}_\beta:\textsubscript{$\gamma$}M\longrightarrow \textsubscript{$\gamma'$} M^{\beta}$ that satisfies $\widehat{\theta}_{\beta}(g' \cdot \textsubscript{$\gamma$}m)=\beta(g') \cdot \widehat{\theta}_{\beta}(\textsubscript{$\gamma$}m),$ where $\textsubscript{$\gamma$}m \in \textsubscript{$\gamma$}M$ and $g' \in G.$

  Finally, define $$\Theta_\beta': \textsubscript{$\gamma$}M\oplus S'\longrightarrow \textsubscript{$\gamma'$} M^{\beta} \oplus \widetilde{\beta}(S')$$ by $\Theta_\beta'(\textsubscript{$\gamma$}m,s')=(\widehat{\theta}_\beta(\textsubscript{$\gamma$}m), \widetilde{\beta}(s')),$ where $\textsubscript{$\gamma$}m \in \textsubscript{$\gamma$}M$ and $s' \in S'.$ Then $\Theta_\beta' $ is a $\Z$-isomorphism and satisfies  $$\Theta_\beta'(g' \cdot \textsubscript{$\gamma$}m, g' \cdot s')=\beta(g') \cdot \Theta_\beta'(\textsubscript{$\gamma$}m, s'),\, \forall g' \in G.$$
  We must verify that $\Theta_\beta'$ induces a $\Z$-isomorphism $$\Theta_\beta: \textsubscript{$\gamma$}(M)_\varphi\longrightarrow \textsubscript{$\gamma'$} M^{\beta}_{\varphi'}.$$
  Let $(\textsubscript{$\gamma$}m,s')\in \textsubscript{$\gamma$
  }(M)_\varphi$, so if $$\textsubscript{$\gamma$}m=(m',e')+\langle(i(z),-\gamma(z))| z \in \Z \oplus R\rangle,$$ with $(m', e') \in M \oplus (\Z \oplus R'),$ then $\varphi(s')=\pi'(\textsubscript{$\gamma$}m)=\pi(m') \in S$, because $\textsubscript{$\gamma$}M_{\varphi}$ is the pullback of $\pi'$ and $\varphi$ (see diagram (\ref{pullback gamma betaM})). By the definition of $\widehat{\theta}_\beta$, we have $$\widehat{\theta}_\beta(\textsubscript{$\gamma$}m)=\widehat{\theta}_\beta((m',e')+\langle(i(z),-\gamma(z))| z \in \Z \oplus R\rangle)=(\Omega_{\beta}(m'), \widetilde{\beta}(e')) + \langle(j(z),-\gamma'(z))| z \in \Z \oplus R\rangle.$$  It follows from equations (\ref{as linhas})  that  $$\varphi'(\widetilde{\beta}(s'))=\widetilde{\beta}\varphi(s')=\widetilde{\beta}(\pi(m')).$$
  If $m'=(x,w)+ \langle(i_0(y),-f(y))| y \in E\rangle \in M,$ where $ x\in \Lambda$ and $w \in \Z \oplus R$, then $\pi(m')=\pi_0(x) \in S$. So $\widetilde{\beta}(\pi(m'))=\widetilde{\beta}(\pi_0(x))=\pi_0(\widetilde{\beta}(x))$. On the other hand, $$\Omega_{\beta}(m')=(\widetilde{\beta}(x), \widetilde{\beta}(w)) + \langle(i_0(y),-\widetilde{\beta}(f)(y))| y \in E\rangle \in M^{\beta},$$ hence, it follows from diagram (\ref{pullback gamma betaM}) that $$\alpha(\Omega_{\beta}(m'))=\pi_0(\widetilde{\beta}(x)).$$ As  
  $$ 
  \alpha'(\widehat{\theta}_{\beta}(\textsubscript{$\gamma$}m))=\alpha(\Omega_{\beta}(m')),$$ we have $$\varphi'(\widetilde{\beta}(s'))=\alpha'(\widehat{\theta}_{\beta}(\textsubscript{$\gamma$}m)),$$ and the pair $(\widehat{\theta}_{\beta}(\textsubscript{$\gamma$}m), \widetilde{\beta}(s')) \in \textsubscript{$\gamma'$} M^{\beta}_{\varphi'}.$
   Thus $\Theta_\beta'$ induces a map ${\Theta}_\beta: \textsubscript{$\gamma$}(M)_\varphi\longrightarrow \textsubscript{$\gamma'$} M^{\beta}_{\varphi'}$ which is an injective $\Z$-homomorphism. On the other hand, if $(\tilde{m}, \tilde{s}) \in \textsubscript{$\gamma'$} M^{\beta}_{\varphi'},$ it follows from the fact that $\Theta_{\beta}'$ is a $\Z$-isomorphism that there exists $(\textsubscript{$\gamma$}m, s') \in \textsubscript{$\gamma$}M \oplus S'$ such that $\Theta_{\beta}'(\textsubscript{$\gamma$}m, s')=(\tilde{m}, \tilde{s})=(\widehat{\theta}_{\beta}(\textsubscript{$\gamma$}m), \widetilde{\beta}(s')),$ and therefore $\varphi'(\widetilde{\beta}(s'))=\alpha'(\widehat{\theta}_{\beta}(\textsubscript{$\gamma$}m)).$ As before, write $$\widehat{\theta}_{\beta}(\textsubscript{$\gamma$}m)=(\Omega_{\beta}(m'), \widetilde{\beta}(e'))+ \langle(j(z),-\gamma'(z))| z \in \Z \oplus R\rangle),$$
    where $(\Omega_{\beta}(m'), \widetilde{\beta}(e')) \in M^{\beta} \oplus (\Z \oplus \widetilde{\beta}(R'))$.  It follows from equations (\ref{as linhas}) and from diagrams (\ref{pullback gamma M}) and (\ref{pullback gamma betaM}) that $$\alpha'(\widehat{\theta}_{\beta}(\textsubscript{$\gamma$}m))=\varphi'(\widetilde{\beta}(s'))=\widetilde{\beta}(\varphi(s')) \mbox{ and } \alpha'(\widehat{\theta}_{\beta}(\textsubscript{$\gamma$}m))=\alpha(\Omega_{\beta}(m'))=\widetilde{\beta}(\pi(m'))).$$   Since $\widetilde{\beta} $ is a $\Z$-isomorphism, we have $\varphi(s')=\pi(m')=\pi'(\textsubscript{$\gamma$}m)$, thus $(\textsubscript{$\gamma$}m, s') \in \textsubscript{$\gamma$}(M)_\varphi$ and
     $$\Theta_\beta: \textsubscript{$\gamma$}(M)_\varphi\longrightarrow \textsubscript{$\gamma'$}(M^{\beta})_{\varphi'}$$ is surjective, therefore a $\Z$-isomorphism, which we have already checked has the desired property.
  \end{proof}
  
\begin{defn}Suppose that $M=\displaystyle\bigoplus_{i=1}^k M_i$ where each $M_i$ is an indecomposable $\Z G$-lattice. If  $\beta\in G(p^2)$, then we define $M^{\beta}:=\displaystyle\bigoplus_{i=1}^k \,(M_i)^{\beta}$.
\end{defn}
Let $M=\displaystyle\bigoplus_{i=1}^k M'_i$ be another decomposition of $M$ as a sum of indecomposable $\Z G$-lattices. If $\mathfrak{b}$ and $\mathfrak{c}$ are the $R$ and $S$-ideal classes associated to the decomposition $M=\displaystyle\bigoplus_{i=1}^k M_i$ and $\mathfrak{b'}$ and $\mathfrak{c'}$ are the $R$ and $S$-ideal classes associated to the decomposition $M=\displaystyle\bigoplus_{i=1}^k M'_i$, then $\mathfrak{b'}=\mathfrak{b}$ and $\mathfrak{c'}=\mathfrak{c}$ by Theorem \ref{invariant sums}. Also, by Theorem \ref{invariant sums}, $$u_0\left(\displaystyle\bigoplus_{i=1}^k M_i\right)\hbox{ and }u_0\left(\displaystyle\bigoplus_{i=1}^k M'_i\right) \text{\;have the same image in\;}U_t.$$ So $\widetilde{\beta}(\mathfrak{b})=\widetilde{\beta}(\mathfrak{b'})$, $\widetilde{\beta}(\mathfrak{c})=\widetilde{\beta}(\mathfrak{c'})$, and by Lemma \ref{3}, $$\overline{\beta}\left(u_0\left(\displaystyle\bigoplus_{i=1}^k M_i\right)\right)\hbox{ and }\overline{\beta}\left(u_0\left(\displaystyle\bigoplus_{i=1}^k M'_i\right)\right) \text{\;have the same image in\;} U_t.$$ As $G(p^2)$ acts trivially on $\F_p$ we have that $$\overline{\beta}\left(u_0\left(\displaystyle\bigoplus_{i=1}^k M_i\right)\right)\hbox{ and }\overline{\beta}\left(u_0\left(\displaystyle\bigoplus_{i=1}^k M'_i\right)\right)\text{\;have the same image in\;}\F_p.$$ Therefore, by Theorem \ref{invariant sums}, the definition of $M^{\beta}$ does not depend on the choice of the decomposition of $M$ as a sum of indecomposables.

\begin{corol}\label{twisted repr}
    Let $M$ be a $\Z G$-lattice.  Then for $\beta \in G(p^2)$, there exists a $\Z$-isomorphism $$\Theta_\beta:M\longrightarrow M^{\beta},$$
such that $\Theta_\beta(g' \cdot m)=\beta(g')\cdot \Theta_\beta(m)\,\,\forall g' \in G, m \in M.$
\end{corol}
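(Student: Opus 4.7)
The plan is to reduce the statement to the indecomposable case already handled in Lemma \ref{caso genus}, combined with the direct action of $\widetilde{\beta}$ on the Type (A) modules that are not extensions. First, fix a decomposition $M=\bigoplus_{i=1}^k M_i$ into indecomposable $\Z G$-lattices, as classified by Theorem \ref{integral indecom}. By definition, $M^\beta=\bigoplus_{i=1}^k M_i^\beta$, and the discussion preceding the corollary shows (via Theorem \ref{invariant sums}) that the isomorphism class of $M^\beta$ does not depend on the chosen decomposition, so we are free to argue with the fixed one.

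Next, I would construct a semilinear $\Z$-isomorphism $\Theta_\beta^i:M_i\longrightarrow M_i^\beta$ on each summand. For the indecomposables of Type (A) that are not extensions, namely $\Z$, $\mathfrak{b}$ and $\mathfrak{c}$, the discussion at the start of Section \ref{s3} (just before Lemma \ref{3}) supplies such a map directly: $\widetilde{\beta}:\Lambda\to\Lambda$ restricts to a $\Z$-isomorphism $\widetilde{\beta}|_{M_i}:M_i\longrightarrow \widetilde{\beta}(M_i)=M_i^{\beta}$ satisfying $\widetilde{\beta}|_{M_i}(g'\cdot m)=\beta(g')\cdot\widetilde{\beta}|_{M_i}(m)$, and the same argument applies verbatim to the submodules $E(\mathfrak{b})$ and $E(\mathfrak{c})$ when viewed inside $\Lambda$ (alternatively, these are non-split extensions and are covered by Lemma \ref{caso genus}). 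For every remaining indecomposable summand $M_i$ — all of Types (B)--(F), together with the Type (A) extensions $E(\mathfrak{b})$, $E(\mathfrak{c})$ — Lemma \ref{caso genus} directly yields a $\Z$-isomorphism $\Theta_\beta^i:M_i\longrightarrow M_i^\beta$ with $\Theta_\beta^i(g'\cdot m)=\beta(g')\cdot\Theta_\beta^i(m)$ for all $g'\in G$, $m\in M_i$.

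Finally, define
\[
\Theta_\beta:=\bigoplus_{i=1}^k \Theta_\beta^i \;:\; M=\bigoplus_{i=1}^k M_i \longrightarrow \bigoplus_{i=1}^k M_i^\beta = M^\beta.
\]
This is a $\Z$-isomorphism as each $\Theta_\beta^i$ is, and the semilinearity relation $\Theta_\beta(g\cdot m)=\beta(g)\cdot\Theta_\beta(m)$ is verified componentwise from the corresponding relations for the $\Theta_\beta^i$. Since no new geometry appears beyond what was settled in Lemma \ref{caso genus} and in the action of $\widetilde{\beta}$, I do not anticipate a serious obstacle; the only subtle point is well-definedness of $M^\beta$ under different choices of indecomposable decomposition, and this has already been addressed in the paragraph preceding the corollary via Theorem \ref{invariant sums}.
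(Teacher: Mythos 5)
Your proof is correct and follows the same route the paper intends: the corollary is stated without an explicit proof precisely because it is the direct sum of the maps from Lemma \ref{caso genus} (for the indecomposable non-split extensions) and of the restrictions of $\widetilde{\beta}$ (for the Type (A) summands in the genus of $E$, $R$ or $S$), with well-definedness of $M^{\beta}$ under change of decomposition handled in the paragraph preceding the corollary. One minor imprecision: $E(\mathfrak{c})$ is not in the genus of $E$, $R$ or $S$ (its $\Z$-rank is $p^2-p+1$), so the Roiter-embedding argument from the start of Section \ref{s3} does not literally apply to it, but your parenthetical fallback to Lemma \ref{caso genus} is the correct fix.
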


\begin{theorem} \label{M beta geral}
Let $M, M'$ be $\Z G$-lattices and $\beta\in G(p^2)$.  There is a $\Z$-isomorphism $\Psi_\beta:M\longrightarrow M'$ such that $\Psi_\beta(g'm)=\beta(g') \cdot \Psi_\beta(m)\,\, \forall g' \in G$ and $\forall m  \in M$ if, and only if, $M'\iso M^{\beta}$.

\end{theorem}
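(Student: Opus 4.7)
The plan is to derive this theorem as a straightforward consequence of Corollary \ref{twisted repr}, which already provides, for any $\Z G$-lattice $M$ and any $\beta \in G(p^2)$, a $\Z$-isomorphism $\Theta_\beta:M\longrightarrow M^{\beta}$ satisfying the twisted equivariance $\Theta_\beta(g\cdot m)=\beta(g)\cdot\Theta_\beta(m)$. The theorem then becomes essentially formal: the map $\Theta_\beta$ universally converts $\beta$-twisted $\Z G$-maps out of $M$ into ordinary $\Z G$-maps out of $M$, by absorbing the twist into the codomain.

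For the backward implication, I would start from a $\Z G$-isomorphism $\phi:M^{\beta}\longrightarrow M'$ (which exists by hypothesis) and simply set $\Psi_\beta := \phi\circ\Theta_\beta$. A one-line computation shows
$$\Psi_\beta(g\cdot m)=\phi(\Theta_\beta(g\cdot m))=\phi(\beta(g)\cdot\Theta_\beta(m))=\beta(g)\cdot\phi(\Theta_\beta(m))=\beta(g)\cdot\Psi_\beta(m),$$
so $\Psi_\beta$ is the required $\Z$-isomorphism.

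For the forward implication, suppose we are given a $\Z$-isomorphism $\Psi_\beta:M\longrightarrow M'$ with the twisted property. The first step is to check that $\Psi_\beta^{-1}:M'\longrightarrow M$ is $\beta^{-1}$-twisted equivariant: substituting $m=\Psi_\beta^{-1}(m')$ into the defining identity yields $\Psi_\beta^{-1}(g\cdot m')=\beta^{-1}(g)\cdot\Psi_\beta^{-1}(m')$ for every $g\in G$ and $m'\in M'$. Now consider the composition $\Phi:=\Theta_\beta\circ\Psi_\beta^{-1}:M'\longrightarrow M^{\beta}$. For $g\in G$ and $m'\in M'$,
$$\Phi(g\cdot m')=\Theta_\beta(\beta^{-1}(g)\cdot\Psi_\beta^{-1}(m'))=\beta(\beta^{-1}(g))\cdot\Theta_\beta(\Psi_\beta^{-1}(m'))=g\cdot\Phi(m'),$$
so the two twists cancel and $\Phi$ is an honest $\Z G$-isomorphism $M'\iso M^{\beta}$.

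There is no real obstacle here: all of the technical content has been absorbed into Lemma \ref{caso genus} and the extension to arbitrary direct sums recorded in Corollary \ref{twisted repr}. The only thing to be slightly careful about is the bookkeeping of whether the twist goes through $\beta$ or $\beta^{-1}$ when inverting, but this is handled by the substitution indicated above. Consequently the proof in the paper is expected to be very short, reducing the statement to an application of Corollary \ref{twisted repr} in each direction.
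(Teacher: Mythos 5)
Your proposal is correct and matches the paper's argument: the paper also deduces the backward implication directly from Corollary \ref{twisted repr} and, for the forward implication, observes that $\Psi_\beta\Theta_\beta^{-1}$ (the same composite as your $\Theta_\beta\circ\Psi_\beta^{-1}$, just written in the other direction) is a $\Z G$-isomorphism between $M^\beta$ and $M'$. The twist bookkeeping you carry out is exactly the content the paper leaves implicit.
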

\begin{proof}
    The backward implication is Corollary \ref{twisted repr}.  For the forward implication, let $\Theta_\beta:M\longrightarrow M^\beta$ be  the map given by  Corollary \ref{twisted repr}. Then $\Psi_\beta\Theta_\beta^{-1}$ is a $\Z G$-isomorphism.
\end{proof}

\section{Isomorphisms of semidirect products}

Let $M$ and $M'$ be two finitely generated free abelian groups and $G = C_{p^2}$. In this section we will decide when two semidirect products $M\rtimes C_{p^2}, M'\rtimes C_{p^2}$ (and their profinite completions) are isomorphic, which will be crucial for the calculation of the genus of such groups.

\begin{defn} \label{acao de G do semidireto}
Let $N$ be a normal (closed) subgroup of a (profinite) group $G.$ We will say that the (continuous) homomorphism $\psi: G \longrightarrow Aut(N)$ is induced by conjugation if $\psi(g) (n)=n^{g^{-1}}:= gng^{-1}, \, \forall g \in G \mbox{ and } n \in N.$    
\end{defn}

We restate three results from \cite{GZ} because they will be used repeatedly.

\begin{lemma}[{\cite[Lemma 2.1]{GZ}}] \label{representacoes equivalentes}
Let $N$ be a normal subgroup of the groups $G_1, G_2$ and let $f: G_1 \longrightarrow G_2$ be an isomorphism such that $f(N)=N.$ Let $\psi_i: G_i \longrightarrow Aut(N)\,(i=1,2)$ be the homomorphisms induced by conjugation and write $\bar f =f|_{_{N}}$ for the restriction of $f$ to $N.$ Then $[\psi_1(g)]^{(\bar f)^{-1}} = \psi_2(f(g))$ holds for all $g \in G_1.$ In particular, the images $\psi_1(G_1)$ and $\psi_2(G_2)$ in $Aut(N)$ are conjugated by the element $(\bar f)^{-1}.$      
\end{lemma}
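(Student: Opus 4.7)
The statement is a direct computation unpacking the definition that each $\psi_i$ is induced by conjugation, together with the fact that $f$ is a homomorphism restricting to a bijection $\bar f$ on $N$. So my plan is to fix $g \in G_1$ and an arbitrary $n \in N$, and evaluate both sides of the claimed equality at $n$.

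For the right-hand side, by Definition \ref{acao de G do semidireto} applied to $G_2$, we have $\psi_2(f(g))(n) = n^{f(g)^{-1}} = f(g)^{-1} n f(g)$. Because $f$ is a homomorphism, $f(g)^{-1} = f(g^{-1})$, and because $f(N) = N$, there is a unique $n' \in N$ with $n = f(n')$, namely $n' = \bar f^{-1}(n)$. Thus
\[
\psi_2(f(g))(n) = f(g^{-1}) f(n') f(g) = f(g^{-1} n' g) = f\bigl(\psi_1(g)(\bar f^{-1}(n))\bigr) = \bar f \circ \psi_1(g) \circ \bar f^{-1}(n),
\]
where the penultimate equality uses the definition of $\psi_1$ and the final equality uses that $\psi_1(g)(N) \subseteq N$ so $f$ coincides with $\bar f$ there. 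With the convention $\alpha^{\beta} := \beta^{-1} \alpha \beta$ for conjugation in $\mathrm{Aut}(N)$, this is exactly $[\psi_1(g)]^{(\bar f)^{-1}}(n)$. Since $n$ was arbitrary, the two automorphisms coincide.

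For the ``in particular'' statement, as $g$ varies over $G_1$, the element $f(g)$ varies over $G_2$ because $f$ is a bijection, and hence $\psi_2(G_2) = \{\psi_2(f(g)) : g \in G_1\}$. By the equality just proved, this set equals $\{[\psi_1(g)]^{(\bar f)^{-1}} : g \in G_1\} = [\psi_1(G_1)]^{(\bar f)^{-1}}$, so the two image subgroups of $\mathrm{Aut}(N)$ are conjugate via $(\bar f)^{-1}$.

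There is no real obstacle here: the only thing to be careful about is the convention for which side of the element one conjugates on (so that $[\cdot]^{(\bar f)^{-1}}$ correctly means $\bar f \circ (-) \circ \bar f^{-1}$), and the fact that one needs $f(N) = N$ in order to make sense of $\bar f^{-1}$ and to guarantee that $\psi_1(g)(\bar f^{-1}(n))$ is a well-defined element of $N$ to which $\bar f$ can be applied. In the profinite case one additionally notes that $f$ continuous implies $\bar f$ continuous, so all the conjugations remain in the topological automorphism group, and the argument is otherwise unchanged.
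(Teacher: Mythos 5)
Your proof is correct. The paper itself gives no proof of this lemma --- it is quoted verbatim from \cite[Lemma 2.1]{GZ} as an imported result --- so there is nothing internal to compare against; your direct unwinding of Definition \ref{acao de G do semidireto} (evaluate both sides at an arbitrary $n \in N$, use that $f$ is a homomorphism with $f(N)=N$ to rewrite $f(g)^{-1}\,n\,f(g)$ as $\bar f\bigl(\psi_1(g)(\bar f^{-1}(n))\bigr)$, then read off the conjugation identity) is exactly the standard argument one would give, and the ``in particular'' clause follows as you say from surjectivity of $f$. Your closing remark about conventions is apt but harmless: whichever consistent choice one makes for $n^{h}$ and for $\alpha^{\beta}$ in $\mathrm{Aut}(N)$, the computation lands on $\psi_2(f(g)) = \bar f \circ \psi_1(g) \circ \bar f^{-1}$, which is what the displayed identity asserts.
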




\begin{prop}[{\cite[Proposition 2.14]{GZ}} or {\cite[Proposition 1.2 (6)]{Sam}}] \label{grupos no genero}
Let $M$ be a finitely generated free abelian group and let $H$ be a finite group. Let $\psi: H \longrightarrow Aut(M)$ be a homomorphism. Let $G$ be a finitely generated residually finite group with a profinite completion isomorphic to that of $
M \rtimes_{\psi} H.$ Then $G$ is isomorphic to a split extension $G \cong M \rtimes_{\eta} H.$      
\end{prop}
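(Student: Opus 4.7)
The plan is to extract from the hypothesis $\widehat{G}\cong \widehat{M}\rtimes_{\widehat{\psi}} H$ a normal subgroup $N\triangleleft G$ that plays the role of $M$, identify $N$ with $M$ as abelian groups, and then lift the splitting of $\widehat{G}\to H$ back to one of $G\to H$ by a cohomological argument.

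First, composing the canonical embedding $G\hookrightarrow \widehat{G}$ (available since $G$ is residually finite) with the projection $\widehat{G}\twoheadrightarrow H$ yields a surjection $\pi:G\twoheadrightarrow H$ whose kernel $N$ has index $|H|$. Since $N\subset \widehat{M}$, it is abelian, and being of finite index in the finitely generated group $G$ it is also finitely generated. Density of $G$ in $\widehat{G}$ together with the discreteness of $H$ force $N$ to be dense in $\widehat{M}$, and since $N$ has finite index in $G$ the subspace and full profinite topologies on $N$ agree; hence $\widehat{N}\cong \widehat{M}\cong \widehat{\Z}^n$. The structure theorem for finitely generated abelian groups gives $N\cong \Z^r\oplus T$ with $T$ finite, and $\widehat{N} = \widehat{\Z}^r\oplus T$ forces $r=n$, $T=0$. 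Thus $N\cong M$ as abelian groups.

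The short exact sequence $1\to N\to G\to H\to 1$, with the conjugation action of $H$ on $N$, has a class $\xi\in H^2(H, N)$. Lemma \ref{representacoes equivalentes} guarantees that the action of $H$ on $\widehat{N} = \widehat{M}$ agrees with $\widehat{\psi}$ and that the image of $\xi$ under the natural map $H^2(H, N)\to H^2(H,\widehat{N})$ is precisely the extension class of $\widehat{G}$ over $H$. Since $\widehat{G}\cong \widehat{M}\rtimes_{\widehat{\psi}} H$ is split by hypothesis, this image is zero, and so it suffices to show that the natural map $H^2(H, N)\to H^2(H,\widehat{N})$ is injective.

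The injectivity rests on three observations: (i) $\widehat{N} = \prod_p N\otimes_{\Z}\Z_p$; (ii) for $H$ finite the bar resolution shows cohomology commutes with direct products of coefficient modules; (iii) flatness of $\Z_p$ over $\Z$ gives $H^i(H, N\otimes_{\Z} \Z_p)\cong H^i(H,N)\otimes_{\Z} \Z_p$. Combining these, $H^2(H,\widehat{N})\cong \prod_p H^2(H, N)\otimes_{\Z} \Z_p$. Since $H$ is finite and $N$ is finitely generated, $H^2(H,N)$ is itself finite (finitely generated and annihilated by $|H|$), so the target is its canonical primary decomposition and the map is in fact an isomorphism. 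Therefore $\xi=0$, giving a splitting $G\cong N\rtimes_{\eta'} H$; transporting $\eta'$ through any abelian-group isomorphism $N\cong M$ yields $G\cong M\rtimes_{\eta} H$. I expect the principal technical step to be the injectivity of the completion map on cohomology, while matching the $H$-action on $N$ with $\widehat{\psi}$ is a bookkeeping item handled by Lemma \ref{representacoes equivalentes}.
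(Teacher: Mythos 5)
The paper does not prove this proposition at all: it is imported verbatim from Grunewald--Zalesskii as \cite[Proposition 2.14]{GZ}, so there is no internal proof to compare against. Your argument is a correct, self-contained proof and follows essentially the standard line of the cited source: identify $N=G\cap\widehat{M}$ via the embedding $G\hookrightarrow\widehat{G}$, show $N\cong M$ by comparing completions (using that a finite-index subgroup of a finitely generated group inherits the full profinite topology), observe that the pushout of $1\to N\to G\to H\to 1$ along $N\hookrightarrow\widehat{N}$ is the split extension $1\to\widehat{M}\to\widehat{G}\to H\to 1$, and conclude by injectivity of $H^2(H,N)\to H^2(H,\widehat{N})$. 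All the supporting steps you cite (cohomology of a finite group commuting with products of coefficients, $H^i(H,N\otimes\Z_p)\cong H^i(H,N)\otimes\Z_p$ by flatness and finite generation, finiteness of $H^2(H,N)$) are sound. Two small remarks: the surjectivity of $G\to H$ deserves the one-line justification that $\widehat{M}\cdot G$ is an open dense subgroup of $\widehat{G}$, hence everything; and the injectivity of $H^2(H,N)\to H^2(H,\widehat{N})$ can be obtained more quickly by noting that $\widehat{N}/N\cong(\widehat{\Z}/\Z)^n$ is a $\Q$-vector space, so $H^i(H,\widehat{N}/N)=0$ for $i>0$ and the map is even an isomorphism. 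Neither affects correctness.
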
 

\begin{lemma}[{\cite[Lemma 2.15]{GZ}}] \label{fiel}
Let $G$ be a group containing a torsion-free abelian group $M$ as a normal subgroup of finite index. Suppose that $M$ is a faithful $G/M$-module. Then every abelian subgroup of finite index in $G$ is contained in $M.$
\end{lemma}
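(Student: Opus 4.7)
The plan is to let $A$ be an arbitrary abelian subgroup of finite index in $G$ and show that an arbitrary $a \in A$ must lie in $M$. Since $[G:M]<\infty$ and $[G:A]<\infty$, the intersection $A\cap M$ has finite index both in $A$ and in $M$. The key observation is that because $A$ is abelian, $a$ commutes with every element of $A\cap M$; translating this via the conjugation action of $G/M$ on $M$, this says that the image $\bar a\in G/M$ acts trivially on $A\cap M$. The goal is then to promote ``acts trivially on a finite index subgroup of $M$'' to ``acts trivially on $M$'', at which point faithfulness of the $G/M$-action on $M$ gives $\bar a = 1$, i.e.\ $a\in M$.

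For the promotion step, I would use that $M$ is torsion-free abelian, so $M\cong \Z^n$ for some $n$, and the action of $\bar a$ on $M$ is $\Z$-linear. Consider the endomorphism $\bar a - 1$ of $M$. Triviality of the action on $A\cap M$ means $(\bar a - 1)(A\cap M) = 0$, so the image of $\bar a - 1$ is contained in $M$ and is a quotient of $M/(A\cap M)$. The latter is finite since $A\cap M$ has finite index in $M$, so the image of $\bar a - 1$ is a finite subgroup of the torsion-free group $M$, hence trivial. Therefore $\bar a$ acts as the identity on $M$.

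Applying the faithfulness hypothesis now gives $\bar a = 1$ in $G/M$, i.e.\ $a\in M$. Since $a\in A$ was arbitrary, $A\subseteq M$, completing the proof.

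The argument is short; the only subtlety is correctly identifying the role of torsion-freeness of $M$ — it is exactly what prevents a non-trivial finite image of $\bar a - 1$ from occurring, and without it the statement would fail (a finite-order automorphism could easily act trivially on a finite-index subgroup without being the identity, for instance on a finite abelian group). So the main conceptual step to get right is to frame the hypothesis ``$M$ is a faithful $G/M$-module'' in terms of the conjugation action, and to extract the linear-algebraic consequence of torsion-freeness at exactly the right moment.
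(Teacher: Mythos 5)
Your proof is correct. The paper itself gives no proof of this lemma --- it is quoted verbatim from \cite[Lemma 2.15]{GZ} --- but your argument is exactly the standard one: $\bar a$ fixes the finite-index subgroup $A\cap M$ pointwise, so the endomorphism $\bar a - 1$ of $M$ factors through the finite group $M/(A\cap M)$, and torsion-freeness forces its image to vanish, after which faithfulness gives $\bar a = 1$. One small remark: the assertion ``$M$ is torsion-free abelian, so $M\cong\Z^n$'' is not justified as stated (torsion-free abelian groups need not be free, e.g.\ $\Q$), but your argument never actually uses it --- only that $\bar a - 1$ is an endomorphism of the abelian group $M$ and that $M$ has no nontrivial finite subgroups --- so the proof stands for the lemma in the generality in which it is stated.
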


\begin{prop} \label{i ii}
Let $M$ be  a finitely generated free abelian group and $\mathcal{E}=M \rtimes_{\psi_{}} G, \mathcal{E}'=M \rtimes_{\psi_{ }'} G$ be two semidirect products. Suppose that the actions of $G$ on $M$ induced by $\psi_{}$ and $\psi_{ }'$ are faithful. Denote by $M_{}$ and $M_{}'$ the respective $\Z G$-modules induced by the semidirect products. The groups $\mathcal{E}$ and $\mathcal{E} '$ are isomorphic if and only if $M_{} \cong (M_{}')^{\beta}$ as $\Z G$-modules for some $\beta \in G(p^2)$.
\end{prop}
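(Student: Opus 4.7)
The plan is to prove both implications using the tools already assembled in Sections \ref{2} and \ref{s3}. The backward direction is a direct construction of an explicit isomorphism, so I would handle it first; the forward direction requires extracting both the Galois automorphism $\beta$ and a twisted equivariance condition from an abstract group isomorphism.

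For the backward implication, suppose $M \cong (M')^\beta$ for some $\beta \in G(p^2)$. Applying Theorem \ref{M beta geral} (with the roles of $M$ and $M'$ exchanged), there exists a $\Z$-isomorphism $\Psi: M' \to M$ satisfying $\Psi(g \cdot m') = \beta(g) \cdot \Psi(m')$ for every $g \in G$ and $m' \in M'$. Define $f: \mathcal{E}' \to \mathcal{E}$ by $f(m', g) := (\Psi(m'), \beta(g))$. A direct calculation using the equivariance of $\Psi$ and the fact that $\beta$ is a group automorphism shows that $f$ respects the semidirect product multiplication, and bijectivity is immediate from the bijectivity of $\Psi$ and $\beta$.

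For the forward implication, let $f: \mathcal{E} \to \mathcal{E}'$ be a group isomorphism. Because the actions $\psi,\psi'$ are faithful, Lemma \ref{fiel} implies that $M$ (respectively $M'$) contains every abelian subgroup of finite index in $\mathcal{E}$ (respectively $\mathcal{E}'$), so both are characteristic subgroups; hence $f(M) = M'$. Write $\bar f := f|_M : M \to M'$, and let $\beta \in \text{Aut}(G) \cong G(p^2)$ be the automorphism induced by $f$ on the quotient $\mathcal{E}/M \cong G \cong \mathcal{E}'/M'$. Following the strategy of Lemma \ref{representacoes equivalentes}, for any $g \in G$ (viewed in $\mathcal{E}$ via the splitting) and $m \in M$, I would compute $f(gmg^{-1}) = f(g)\,\bar f(m)\,f(g)^{-1}$; writing $f(g) = m'_g \cdot \beta(g)$ in $\mathcal{E}'$ and using that $M'$ is abelian, the right side simplifies to $\psi'(\beta(g))(\bar f(m))$, yielding the equivariance $\bar f(g \cdot m) = \beta(g) \cdot' \bar f(m)$.

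To conclude, observe that $\bar f^{-1}: M' \to M$ then satisfies $\bar f^{-1}(g \cdot' m') = \beta^{-1}(g) \cdot \bar f^{-1}(m')$ (substitute $m' = \bar f(m)$ in the equivariance of $\bar f$ and relabel), so Theorem \ref{M beta geral} gives $M \cong (M')^{\beta^{-1}}$; since $\beta^{-1} \in G(p^2)$, this is the required Galois element. The only real subtlety, and the main potential source of bookkeeping errors, is keeping track of which direction of equivariance feeds into which side of Theorem \ref{M beta geral}; once this is carefully handled the two implications are essentially symmetric, and no new computational input beyond the three cited results is required.
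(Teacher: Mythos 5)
Your proof is correct and follows essentially the same route as the paper: use Lemma \ref{fiel} to see that the isomorphism carries $M$ to $M'$, read off $\beta\in\tn{Aut}(G)\iso G(p^2)$ from the induced map on quotients, derive the twisted equivariance of $f|_M$ (you compute it directly where the paper invokes \cite[Proposition 2.5]{GZ}, but the computation is the same one underlying Lemma \ref{representacoes equivalentes}), and finish with Theorem \ref{M beta geral}; the backward direction is the same explicit construction $(m,x)\mapsto(\Upsilon(m),\beta(x))$. Your explicit passage to $\beta^{-1}$ to match the orientation of Theorem \ref{M beta geral} is in fact slightly more careful than the paper, which elides this harmless inversion since $\beta$ ranges over all of $G(p^2)$.
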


\begin{proof}
Let $\Phi: \mathcal{E} \longrightarrow \mathcal{E}'$ be an isomorphism. Since the representation $\psi_{}$ is faithful, we have by Lemma \ref{fiel} that $\Phi(M)=M$. Define $\varphi  := \Phi|_{_{M}}$ and let $\beta$ be the isomorphism of $G$ given by $g\mapsto g'$, where $\Phi((0,g) + M) = (0,g')+M$.
By \cite[Proposition 2.5]{GZ} the pair $(\varphi , \beta)$ satisfies $[\psi_{}(x)]^{\varphi ^{-1}} = \psi_{}'(\beta(x))$ for all $x \in G$ and yields an isomorphism from $\mathcal{E}$ to $\mathcal{E}'$ 
 given by $(\varphi, \beta)(m,x)=(\varphi(m), \beta(x)),$ $(m, x) \in \mathcal{E}.$ Recall that $x \cdot m = m^{x^{-1}} = \psi_{}(x)(m)$. Therefore we have that  \begin{equation} \label{conta semi abstr} \varphi(x \cdot m)= \varphi(\psi_{}(x)(m))= \left( \psi_{}'(\beta(x)) \circ \varphi\right)(m)=(\varphi(m))^{\beta(x)^{-1}}=\beta(x) \cdot \varphi(m),  \, \forall x \in G, m \in M_{},\end{equation} from which we conclude that $M_{} \cong (M_{}')^{\beta}$ as $\Z G$-modules, by Theorem \ref{M beta geral}.

Conversely, suppose we have a $\Z G$-module isomorphism $M_{} \iso (M_{}')^{\beta}$ for some $\beta \in G(p^2)$. By Theorem \ref{M beta geral} there is a $\mathbb{Z}$-isomorphism $\Upsilon: M' \longrightarrow M$ satisfying $\Upsilon(g' m')=\beta(g') \Upsilon(m'),$ $\forall g' \in G$ and $m' \in M'.$ Therefore \begin{equation} \label{conta abstra para o semi direto iso} \Upsilon \circ \psi'(g') \circ \Upsilon^{-1} = \psi(\beta(g')), \, \forall g' \in G.\end{equation} Define $\Omega: \mathcal{E}' \longrightarrow \mathcal{E}$ sending $(m',x') \in \mathcal{E}'$ to $(\Upsilon(m'), \beta(x'))$. It follows from \cite[Proposition 2.5 (1)]{GZ} that $\Omega$ is an isomorphism.

 \end{proof}




\begin{lemma} \label{beta fixa invariantes} Let $M$ be a $\Z G$-lattice with genus determined by $(a,b,c,d,e; \beta, \gamma, \delta, \epsilon, \eta)$ 
and $\beta \in G(p^2.)$ Then the map $\Theta_\beta:M\longrightarrow M^\beta$ 
 given by Corollary \ref{twisted repr} induces an automorphism of $\widehat{\Z}$-modules $\widehat{\Theta_{\beta}}:\widehat{M}\longrightarrow \widehat{M^\beta}$    satisfying  $\widehat{\Theta_{\beta}}(g'm)=\beta(g')\widehat{\Theta_{\beta}}(m)$ for $m\in \widehat{M}, g'\in G$. If $L$ is a $\widehat{\Z}G$-lattice possessing a $\widehat{\Z}$-automorphism $\Psi_\beta:\widehat{M}\longrightarrow L$ such that $\Psi_\beta(g'm)=\beta(g')\Psi_\beta(m)$ for $m \in \widehat{M}$ and $g'\in G$, then $L\iso \widehat{M^{\beta}}$. In particular, $L$ is the completion of a $\Z G$-lattice having the same genus invariants as $M$. 
 \end{lemma}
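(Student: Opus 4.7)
The plan is to establish the three assertions of the lemma in turn.

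First, since $\Theta_\beta:M\longrightarrow M^\beta$ is a $\Z$-isomorphism of finitely generated free abelian groups (Corollary \ref{twisted repr}) and profinite completion is an exact functor on such groups, it induces a $\widehat{\Z}$-isomorphism $\widehat{\Theta}:\widehat{M}\longrightarrow \widehat{M^\beta}$. The identity $\Theta_\beta(gm)=\beta(g)\Theta_\beta(m)$ holds on the dense subgroup $M\subseteq \widehat{M}$, and since both sides of the corresponding equation for $\widehat{\Theta}$ are continuous in $m$, it extends to all of $\widehat{M}$.

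Second, given $L$ and $\Psi_\beta:\widehat{M}\longrightarrow L$ as in the statement, I would introduce the composite $\Phi:=\Psi_\beta\circ\widehat{\Theta}^{-1}:\widehat{M^\beta}\longrightarrow L$, which is automatically a $\widehat{\Z}$-isomorphism. The crucial step is the direction-reversal of twisted equivariance: setting $y=\widehat{\Theta}(x)$ in $\widehat{\Theta}(gx)=\beta(g)\widehat{\Theta}(x)$ and using that $\beta$ permutes $G$ gives
\[\widehat{\Theta}^{-1}(gy)=\beta^{-1}(g)\,\widehat{\Theta}^{-1}(y), \qquad g\in G,\; y\in\widehat{M^\beta}.\]
Then
\[\Phi(gy)=\Psi_\beta\bigl(\beta^{-1}(g)\widehat{\Theta}^{-1}(y)\bigr)=\beta\bigl(\beta^{-1}(g)\bigr)\,\Psi_\beta\bigl(\widehat{\Theta}^{-1}(y)\bigr)=g\,\Phi(y),\]
so $\Phi$ is $\widehat{\Z}G$-linear, giving $L\iso\widehat{M^\beta}$.

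Third, since $L\iso\widehat{M^\beta}$, the lattice $L$ is the profinite completion of the $\Z G$-lattice $M^\beta$, and it remains to check that $M^\beta$ has the same genus invariants $(a,b,c,d,e;\beta,\gamma,\delta,\epsilon,\eta)$ as $M$. This follows by direct inspection of the definition of $N\mapsto N^\beta$ given after Lemma \ref{3}: on each of Reiner's indecomposable types only the ideals $\mathfrak{b},\mathfrak{c}$ and the unit data are twisted by $\beta$, while by Lemma \ref{3} the exponent $r$ (and the underlying extension shape) is preserved, so the type counts are unchanged. By Lemma \ref{posto e completamentos} this tuple determines the genus.

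The main subtle point is the bookkeeping surrounding the direction-reversal of twisted equivariance for $\widehat{\Theta}^{-1}$, which is what allows the two $\beta$-twists to cancel and yields a genuine $\widehat{\Z}G$-isomorphism; beyond that, everything rests on Corollary \ref{twisted repr} together with the fact that $N\mapsto N^\beta$ preserves Reiner's type data.
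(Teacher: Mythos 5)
Your proof is correct and follows essentially the same route as the paper: the paper also obtains $\widehat{\Theta}$ by completing $\Theta_\beta$ (using the cofinal system of characteristic finite-index subgroups rather than your exactness-plus-density argument, but to the same effect) and then concludes by composing $\Psi_\beta$ with $\widehat{\Theta}^{\pm 1}$ so that the two $\beta$-twists cancel and yield a genuine $\widehat{\Z}G$-isomorphism between $L$ and $\widehat{M^\beta}$. Your explicit check that $N\mapsto N^\beta$ preserves the genus parameters is a reasonable addition that the paper leaves implicit.
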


\begin{proof}
Since $M$ is finitely generated as a $\Z$-module, the system $Ch(M)$ of characteristic finite index
 subgroups of $M$ is cofinal in the system of all subgroups of finite index, because for any $n\in \N$, $n\cdot M$ is characteristic.  So $$\widehat{M}=\varprojlim_{N\in Ch(M)}M/N.$$
  Each $M/N$ is a $\Z G$-module since $N$ is characteristic, and in fact a $\widehat{\Z} G$-module because $l\Z$ acts as zero for some $l\in \N$.
  
  It follows that if $\Theta_\beta:M\longrightarrow M^\beta$ is the map given in  Corollary \ref{twisted repr}, then $\Theta_\beta$ induces a $\widehat{\Z}$-automorphism $\overline{\Theta_\beta}:M/N\longrightarrow M^{\beta}/N^{\beta}$. These maps yield a map of inverse systems as $N$ varies, and hence a $\widehat{\Z}$-automorphism $\widehat{\Theta_{\beta}}:\widehat{M}\longrightarrow \widehat{M^\beta}$ such that $\widehat{\Theta_{\beta}}(g'm)=\beta(g')\widehat{\Theta_{\beta}}(m)$ for $m\in \widehat{M}, g'\in G$. Now, suppose that $L$ is a $\widehat{\Z}G$-lattice possessing a $\widehat{\Z}$-automorphism $\Psi_\beta:\widehat{M}\longrightarrow L$ such that $\Psi_\beta(g'm)=\beta(g')\Psi_\beta(m)$ for $m \in \widehat{M}$ and $g'\in G$. Then $\widehat{\Theta}_\beta\Psi_\beta^{-1}: L\longrightarrow \widehat{M^{\beta}}$ is a $\widehat{\Z}G$-isomorphism.
\end{proof}

\begin{prop} \label{i ii profinite} Let $M$ be  a finitely generated free abelian group and 
$\mathcal{E}=M \rtimes_{\psi_{}} G,$  $\mathcal{E}'=M \rtimes_{\psi_{}'} G$ be two semidirect products. Suppose that the actions of $G$ on $M$ induced by $\psi_{}$ and $\psi_{}'$ are faithful. Denote by $M_{}$ and $M_{}'$ the respective $\Z G$-modules induced by the semidirect products.  Suppose that the genus invariants of $M_{}$ and $M_{}'$ respectively, are $(a,b,c,d,e; \beta, \gamma, \delta, \epsilon, \eta)$ and $(a',b',c',d',e'; \beta', \gamma', \delta', \epsilon', \eta')$.  The profinite completions $\widehat{\mathcal{E}}$ and $\widehat{\mathcal{E} '}$ are isomorphic if and only if $(a,b,c,d,e; \beta,  \epsilon, \eta)=(a',b',c',d',e'; \beta',  \epsilon', \eta')$ and $\gamma + \delta = \gamma'+\delta'$.
\end{prop}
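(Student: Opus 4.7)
My plan is as follows.  By Lemma \ref{posto e completamentos}, the arithmetic condition on the invariants in the proposition is exactly the condition that $M$ and $M'$ lie in the same genus as $\Z G$-lattices, so it suffices to prove that $\widehat{\mathcal{E}} \iso \widehat{\mathcal{E}'}$ if and only if $M \vee M'$.  I would do this by adapting the discrete strategy of Proposition \ref{i ii} to the profinite setting.

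The backward direction is the easy half.  If $M \vee M'$, then $\widehat{M} \iso \widehat{M'}$ as $\widehat{\Z}G$-lattices by definition of genus.  Since $G$ is finite, the profinite completion of the semidirect product decomposes as $\widehat{\mathcal{E}} = \widehat{M} \rtimes_{\widehat{\psi}} G$, and similarly for $\mathcal{E}'$, so any $\widehat{\Z}G$-isomorphism $\widehat{M} \longrightarrow \widehat{M'}$ extends to an isomorphism of profinite semidirect products.

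For the forward direction, starting from an isomorphism $\Phi \colon \widehat{\mathcal{E}} \longrightarrow \widehat{\mathcal{E}'}$, the first step is to show $\Phi(\widehat{M}) = \widehat{M'}$.  This requires a profinite analog of Lemma \ref{fiel}, which I expect to be the main point of care.  Since $G$ is finite and acts faithfully on $M$, it also acts faithfully on $\widehat{M}$; and for any open $G$-invariant subgroup $U \subseteq \widehat{M}$ small enough that $\widehat{M}/U$ remains a faithful $G$-module, Lemma \ref{fiel} applied inside the finite quotient $\widehat{\mathcal{E}}/U$ forces every abelian subgroup of finite index there to lie in $\widehat{M}/U$.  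Passing to the inverse limit singles out $\widehat{M}$ as the unique maximal closed abelian subgroup of finite index in $\widehat{\mathcal{E}}$, a characterization preserved by $\Phi$.

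From here the argument unfolds cleanly.  Setting $\varphi := \Phi|_{\widehat{M}}$ and letting $\beta \in G(p^2) \iso \tn{Aut}(G)$ be the automorphism of $G$ induced by $\Phi$ on the quotient $\widehat{\mathcal{E}}/\widehat{M} \iso G$, the computation (\ref{conta semi abstr}) from the proof of Proposition \ref{i ii} --- which rests only on \cite[Proposition 2.5]{GZ} --- carries over verbatim to yield $\varphi(x \cdot m) = \beta(x) \cdot \varphi(m)$ for all $x \in G$ and $m \in \widehat{M}$.  Applying Lemma \ref{beta fixa invariantes} with $L = \widehat{M'}$ then gives $\widehat{M'} \iso \widehat{M^{\beta}}$ as $\widehat{\Z}G$-modules, and the final sentence of that lemma records that $M^{\beta}$ has the same genus invariants as $M$.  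Hence $M \vee M'$, completing the proof.
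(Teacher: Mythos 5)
Your proposal is correct and follows essentially the same route as the paper: both directions reduce to Lemma \ref{posto e completamentos}, the forward direction uses Lemma \ref{fiel} to see that the isomorphism preserves $\widehat{M}$, then the twisted-equivariance computation from Proposition \ref{i ii} together with Lemma \ref{beta fixa invariantes}. The only cosmetic difference is that you elaborate a finite-quotient justification for applying Lemma \ref{fiel} in the profinite setting, where the paper simply applies it directly to $\widehat{\mathcal{E}}$ viewed as an abstract group.
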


\begin{proof}



Let $\Omega: \widehat{\mathcal{E}} \longrightarrow \widehat{\mathcal{E}}'$ be an isomorphism. It follows from \cite[Proposition 2.6]{GZ} that we can identify $\widehat{\mathcal{E}}=\widehat{M} \rtimes_{\widehat{\psi_{}}} G$ and $\widehat{\mathcal{E'}}=\widehat{M} \rtimes_{\widehat{\psi_{}'}} G$ (where $\widehat{\psi_{}}$ and $\widehat{\psi_{}'}$ are the natural homomorphisms discussed in \cite[pp.\ 136--137]{GZ}). Since the representations $\widehat{\psi}_{}$ and $\widehat{\psi}_{}'$ are faithful, it follows from Lemma \ref{fiel} that $\Omega(\widehat{M})=\widehat{M}.$ Just as in the abstract case it follows that $\Omega(x \cdot m)= \beta(x) \cdot \Omega(m),\, \forall x \in G, m \in \widehat{M}_{},$ where $\beta \in G(p^2)$ (as in Proposition \ref{i ii}). It follows from Lemmas \ref{posto e completamentos} and \ref{beta fixa invariantes} that $(a,b,c,d,e; \beta,  \epsilon, \eta)=(a',b',c',d',e'; \beta',  \epsilon', \eta')$ and $\gamma+\delta=\gamma'+\delta'$.  

Conversely, suppose that $M$ and $M'$ have the same genus invariants $(a,b,c,d,e; \beta,  \epsilon, \eta)$ and that $\gamma+\delta=\gamma'+\delta'$.  By Lemma \ref{posto e completamentos}, $\widehat{M_{} } \cong \widehat{M_{}'}$ as $\widehat{\Z} G$-lattices and so $\widehat{\mathcal{E}}\iso \widehat{\mathcal{E}'}$. 

\end{proof}

\begin{theorem}\label{theorem sd products in same genus iff modules in same orbit}
    Given $\mathcal{E} = M\rtimes_\psi G$, the value $|\mathfrak{g}(\mathcal{E})|$ is equal to the number of distinct orbits of the action of $G(p^2)$ on the genus of $M$ as a $\Z G$-module.
\end{theorem}

\begin{proof}
    Direct from Propositions \ref{grupos no genero}, \ref{i ii} and \ref{i ii profinite}.
\end{proof}


\section{Profinite genus of semi-direct products}\label{section calculation of profinite genus}

In this section, to economize notation, by ``faithful semidirect product'' we mean a semidirect product $\mathcal{E}=M \rtimes_{\psi_{}} G$, where $G = C_{p^2},$ $M$ is a finitely generated free abelian group and the action of $G$ on $M$ is faithful.  We will denote also by $M$ the $\Z G$-lattice induced by $\psi$.



We will calculate the cardinality of the genus of $\mathcal{E}$ in the class finitely generated residually finite groups. 
However, as we will see below, the size of the genus depends on the structure of the $\Z G$-module $M_{}$, so we will divide the work into cases.  


\begin{remark}

When the action of $G$ on $M$ is not faithful, then $M_{}$ is a $\Z C_p$-lattice of the form $M_{}=\Z^{a} \oplus \displaystyle \bigoplus_{i=1}^{b} \, \mathfrak{b}_i \oplus  \bigoplus_{k=1}^{d} E(\mathfrak{b}'_k).$ If $M_{}$ is nontrivial, $ |\mathfrak{g}\left(\mathcal{E}\right)|= \left| G(p) \backslash \, H(\mathbb{Z}[\zeta_{p}])  \right|$ by \cite[Proposition 2.23]{GZ}).  On the other hand, if $M_{}$ is a trivial module then the cardinality of genus is equal to 1 since $\mathcal{E}=M \times G \cong \Z^a \times C_{p^2}$ is a finitely generated abelian group and such groups are profinitely rigid, see \cite[Proposition 3.1]{reid}).     
\end{remark}

Recall that $H(\Z[\zeta_{p^n}])$ denotes the ideal class group of the field extension $\Q(\zeta_{p^n}):\Q$.

\begin{prop} \label{so Cs}
Let $\mathcal{E}=M \rtimes_{\psi_{}} G$ be a faithful semidirect product. 
Suppose 
$$M_{}=\Z^{a} \oplus \displaystyle \bigoplus_{j=1}^{c} \, \mathfrak{c}_j  \oplus \displaystyle \bigoplus_{l=1}^{e} \, E(\mathfrak{c}'_l),$$ 
with $a \geqslant 0$ and $c+e \geqslant 1.$   Then $$|\mathfrak{g}\left(\mathcal{E}\right)|= \left| G(p^2) \backslash \, H(\mathbb{Z}[\zeta_{p^2}])  \right|.$$ 
\end{prop}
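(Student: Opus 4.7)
The plan is to reduce the counting problem to a straightforward count of $G(p^2)$-orbits on $H(\Z[\zeta_{p^2}])$, using the three main tools already assembled: Proposition \ref{grupos no genero}, Proposition \ref{i ii profinite}, and Proposition \ref{i ii}, together with the isomorphism invariants of Theorem \ref{invariant sums}.

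First, I would use Proposition \ref{grupos no genero} to observe that any $\mathcal{E}' \in \mathfrak{g}(\mathcal{E},\mathfrak{RF})$ may be written as a split extension $\mathcal{E}' = M' \rtimes_{\eta} G$. Since $\psi$ is faithful, so is $\widehat{\psi}$, and hence so is $\widehat{\eta}$, which forces $\eta$ to be faithful as well. Note that the genus invariants of $M$ are $(a,0,c,0,e;\,0,0,0,0,0)$ (all $\beta_r=\gamma_r=\delta_r=\epsilon_r=\eta_r=0$). By Proposition \ref{i ii profinite}, the invariants of $M'$ must satisfy $(a',b',c',d',e';\beta',\epsilon',\eta') = (a,0,c,0,e;0,0,0)$ and $\gamma'_r + \delta'_r = 0$ for each $r$, which forces $\gamma'_r=\delta'_r=0$. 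Thus $M'$ has the same form as $M$, namely $M' = \Z^a \oplus \bigoplus_{j=1}^{c} \mathfrak{c}'_j \oplus \bigoplus_{j=c+1}^{e} E(\mathfrak{c}'_j)$ for some ideals $\mathfrak{c}'_j$ of $S = \Z[\zeta_{p^2}]$.

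Next, by Proposition \ref{i ii}, $\mathcal{E}\cong \mathcal{E}'$ as abstract groups if and only if $M \cong (M')^{\beta}$ as $\Z G$-modules for some $\beta \in G(p^2)$. By Theorem \ref{invariant sums}, since $M$ has no summand of type $\mathfrak{b}$ or $E(\mathfrak{b})$ and no summand of types $(B)$--$(F)$, the isomorphism class of $M$ within its genus is completely determined by the $S$-ideal class $[\mathfrak{c}_1 \cdots \mathfrak{c}_e] \in H(\Z[\zeta_{p^2}])$ (conditions (iii) and (iv) of that theorem do not apply as there are no relevant summands). By the definition of $M^{\beta}$ following Lemma \ref{3}, the $S$-ideal class of $M^{\beta}$ is precisely $\beta\bigl([\mathfrak{c}_1 \cdots \mathfrak{c}_e]\bigr)$, so the induced $G(p^2)$-action on isomorphism classes of modules of this form within the genus agrees with the natural $G(p^2)$-action on $H(\Z[\zeta_{p^2}])$.

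Putting these ingredients together, the map $M' \mapsto [\mathfrak{c}'_1\cdots \mathfrak{c}'_e]$ descends to a well-defined bijection between $\mathfrak{g}(\mathcal{E},\mathfrak{RF})$ and $G(p^2)\backslash H(\Z[\zeta_{p^2}])$. Surjectivity is immediate: given any ideal class $[\mathfrak{c}]$, replacing any one of the $\mathfrak{c}_j$ in $M$ by a representative of $[\mathfrak{c}]$ yields a module in the genus of $M$ (ideals of $\Z[\zeta_{p^2}]$ become principal after $p$-adic completion) whose $S$-ideal class realizes $[\mathfrak{c}]$; the resulting semidirect product lies in $\mathfrak{g}(\mathcal{E},\mathfrak{RF})$ by Proposition \ref{i ii profinite}. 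The main subtlety — and the only real step requiring care — is the verification that the isomorphism invariants of Theorem \ref{invariant sums} reduce here to the single class $[\mathfrak{c}_1\cdots \mathfrak{c}_e]$ (so that conditions (iii) and (iv) can be safely ignored), and that the action of $\beta \in G(p^2)$ on these modules coincides with its natural action on $H(\Z[\zeta_{p^2}])$; everything else follows formally from the cited results.
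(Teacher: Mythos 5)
Your proposal is correct and follows essentially the same route as the paper's proof: reduce to a split extension via Proposition \ref{grupos no genero}, pin down the shape of $M'$ via Proposition \ref{i ii profinite}, translate group isomorphism into $M \cong (M')^{\beta}$ via Proposition \ref{i ii}, and then observe that only invariants (i) and (ii) of Theorem \ref{invariant sums} are relevant, so everything reduces to $G(p^2)$-orbits on $H(\Z[\zeta_{p^2}])$. Your explicit verification of surjectivity (realizing every ideal class within the genus) is a detail the paper leaves implicit, but the argument is the same.
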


\begin{proof}
Let $\mathcal{E}'$ be a finitely generated residually finite group such that $\widehat{\mathcal{E}} \cong \widehat{\mathcal{E}}'.$ By Proposition \ref{grupos no genero}, $ \mathcal{E} ' = M \rtimes_{\psi_{} '} G.$ Denote by $M_{}'$ the $\Z G$-lattice induced by $\psi_{}'.$ By Proposition \ref{i ii profinite}, $M_{}'=\Z^{a} \oplus \bigoplus_{j=1}^{c} \, \mathfrak{c}''_j  \oplus \bigoplus_{l=1}^{e} \, E(\mathfrak{c}'''_l)$. By Proposition \ref{i ii}, $\mathcal{E} \cong \mathcal{E}'$ if and only if $M_{} \cong (M_{}')^{\beta}$ for some $ \beta \in G(p^2).$ Therefore, when $\mathcal{E}\iso \mathcal{E}'$, we have that $\displaystyle\prod_{j=1}^{c}\beta(\mathfrak{c}''_j)\cdot \displaystyle\prod_{l=1}^{e} \, \beta( \mathfrak{c}'''_l)$ and $\displaystyle\prod_{j=1}^{c}\mathfrak{c}_j\cdot \displaystyle\prod_{l=1}^{e} \,  \mathfrak{c}'_l$ represent the same element of $H(\mathbb{Z}[\zeta_{p^2}])$, so that $\displaystyle\prod_{j=1}^{c}\mathfrak{c}''_j\cdot \displaystyle\prod_{l=1}^{e} \,  \mathfrak{c}'''_l$ and $\displaystyle\prod_{j=1}^{c}\mathfrak{c}_j\cdot \displaystyle\prod_{l=1}^{e} \,  \mathfrak{c}'_l$  are  
in the same $G(p^2)$-orbit of $H(\mathbb{Z}[\zeta_{p^2}]).$ Since the only isomorphism invariants needed to describe the isomorphism class of $M_{}$ are those of Items i) and ii) of Theorem \ref{invariant sums},    it follows  that the profinite genus depends only on the orbits of the action of $G(p^2)$ on $H(\mathbb{Z}[\zeta_{p^2}])$, so by Theorem \ref{theorem sd products in same genus iff modules in same orbit} it follows that $$\left|\mathfrak{g}\left( M \rtimes_{\psi_{}} G\right)\right|= \left| G(p^2) \backslash \, H(\mathbb{Z}[\zeta_{p^2}]) \right|.$$ 
\end{proof}

\begin{prop}\label{Cs e Bs com absortion}
Let $\mathcal{E}=M \rtimes_{\psi_{}} G$ be a faithful semidirect product and suppose that $p \not\equiv \, 1 (\mbox{mod } \, 4 ).$ Suppose \begin{equation*}  M_{}= \Z^{a} \oplus \displaystyle \bigoplus_{i=1}^{b} \, \mathfrak{b}_i \oplus  \bigoplus_{j=1}^{c} \, \mathfrak{c}_j  \oplus \bigoplus_{k=1}^{d} E(\mathfrak{b}'_k) \oplus \bigoplus_{l=1}^{e} \, E(\mathfrak{c}'_l)  \oplus \underbrace{M_B \oplus M_C \oplus M_E \oplus M_F}_{M_Q}\end{equation*} 
is such that (at least) two of the following conditions are satisfied: (1) $ b+d  \geqslant 1$, (2) $ c+e  \geqslant 1$, (3) $M_Q\neq 0$.
Then $$|\mathfrak{g}\left(\mathcal{E}\right)|= \left| G(p) \backslash \, H(\mathbb{Z}[\zeta_{p}])  \right| \cdot \left| G(p^2) \backslash \, H(\mathbb{Z}[\zeta_{p^2}])  \right|.$$ 
\end{prop}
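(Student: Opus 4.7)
The strategy follows the template of Proposition \ref{so Cs}, now tracking two ideal classes in parallel.  Let $\mathcal{E}'$ lie in the genus of $\mathcal{E}$.  By Proposition \ref{grupos no genero}, $\mathcal{E}'=M'\rtimes_{\psi'}G$ for some $\Z G$-lattice $M'$, and Proposition \ref{i ii profinite} implies that the genus invariants of $M'$ coincide with those of $M$ up to the identification $\gamma'+\delta'=\gamma+\delta$.  Since $p\not\equiv 1\pmod 4$, no type-(D) summands exist; thus $\delta=\delta'=0$ and $\gamma=\gamma'$, so $M'$ lies in the genus of $M$ in the strict sense of Theorem \ref{invariant sums}(i).

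Next, I would identify the isomorphism invariants within the genus.  The hypothesis forces at least one of (1), (2) to hold, so $M$ has a summand of type $\mathfrak{b}$, $E(\mathfrak{b})$, $\mathfrak{c}$ or $E(\mathfrak{c})$.  Consequently clause (iii) of Theorem \ref{invariant sums} is vacuous, and clause (iv) is vacuous since $p\not\equiv 1\pmod 4$.  Hence the isomorphism class of $M'$ within the genus is determined by its $R$- and $S$-ideal classes $(\mathfrak{b}',\mathfrak{c}')\in H(\Z[\zeta_p])\times H(\Z[\zeta_{p^2}])$.  Combining this with Proposition \ref{i ii}, $\mathcal{E}\cong \mathcal{E}'$ if and only if there exists $\beta\in G(p^2)$ with $(\bar\beta(\mathfrak{b}'),\beta(\mathfrak{c}'))=(\mathfrak{b},\mathfrak{c})$, where $\bar\beta\in G(p)$ is the restriction of $\beta$.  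Therefore $|\mathfrak{g}(\mathcal{E},\mathfrak{RF})|$ equals the number of orbits of the diagonal action $\beta\cdot(r,s)=(\bar\beta(r),\beta(s))$ of $G(p^2)$ on $H(\Z[\zeta_p])\times H(\Z[\zeta_{p^2}])$.

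Finally, to match this orbit count with the product $|G(p)\backslash H(\Z[\zeta_p])|\cdot|G(p^2)\backslash H(\Z[\zeta_{p^2}])|$, I would fix representatives $\mathfrak{b}_1,\ldots,\mathfrak{b}_a$ of the $G(p)$-orbits on $H(\Z[\zeta_p])$ and $\mathfrak{c}_1,\ldots,\mathfrak{c}_b$ of the $G(p^2)$-orbits on $H(\Z[\zeta_{p^2}])$, and argue that the $ab$ pairs $(\mathfrak{b}_i,\mathfrak{c}_j)$ form a complete set of orbit representatives.  Pairwise non-equivalence is immediate: a putative $\beta$ equating $(\bar\beta(\mathfrak{b}_{i'}),\beta(\mathfrak{c}_{j'}))$ with $(\mathfrak{b}_i,\mathfrak{c}_j)$ forces $\mathfrak{b}_i, \mathfrak{b}_{i'}$ into the same $G(p)$-orbit (so $i=i'$) and $\mathfrak{c}_j, \mathfrak{c}_{j'}$ into the same $G(p^2)$-orbit (so $j=j'$).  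Exhaustion — that every $(r,s)$ is $G(p^2)$-equivalent to some $(\mathfrak{b}_i,\mathfrak{c}_j)$ — is the main obstacle, since it amounts to locating a \emph{single} element of $G(p^2)$ simultaneously realizing the required $G(p)$-translation on the first factor and $G(p^2)$-translation on the second.  I expect this to require a finer examination of how the stabilizers in $G(p^2)$ of elements of $H(\Z[\zeta_{p^2}])$ project to $G(p)$, plausibly exploiting the mixed extension summands in $M_Q$ under condition (3) (which tie together $R$- and $S$-data inside a single indecomposable), together with the freedom to reselect the orbit representatives $\mathfrak{b}_i, \mathfrak{c}_j$ within their orbits.
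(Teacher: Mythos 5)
Your reduction is exactly the paper's: pass to $\mathcal{E}'=M'\rtimes_{\psi'}G$ via Propositions \ref{grupos no genero} and \ref{i ii profinite}, observe that $\delta=\delta'=0$ forces $M'$ into the genus of $M$, note that invariants (iii) and (iv) of Theorem \ref{invariant sums} are vacuous under the hypotheses, and conclude from Proposition \ref{i ii} that $|\mathfrak{g}(\mathcal{E},\mathfrak{RF})|$ is the number of orbits of the diagonal action of $G(p^2)$ on $H(\Z[\zeta_p])\times H(\Z[\zeta_{p^2}])$, acting on the first factor through the surjection $G(p^2)\to G(p)$. Up to this point your argument and the paper's coincide.

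The step you single out as ``the main obstacle'' is a genuine gap in your proposal, and your suspicion is well founded: for a group acting diagonally on a product of two sets, the number of orbits is in general \emph{strictly larger} than the product of the two orbit counts (already $C_2$ acting on $\{1,2\}\times\{1,2\}$ by the swap on each factor has two diagonal orbits but one orbit on each factor). For a product of single orbits $O_1\times O_2$ to form one diagonal orbit one needs, for $(x,y)\in O_1\times O_2$, that $S_x\cdot S_y=G(p^2)$, where $S_x$ is the preimage in $G(p^2)$ of $\mathrm{Stab}_{G(p)}(x)$ and $S_y=\mathrm{Stab}_{G(p^2)}(y)$; since $G(p^2)$ is cyclic this means $\mathrm{lcm}(|S_x|,|S_y|)=p(p-1)$, which is not automatic. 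Two further remarks. First, your proposed escape routes will not work as stated: the mixed summands in $M_Q$ cannot tie the two ideal classes together, because the invariant in Theorem \ref{invariant sums}(ii) is only the \emph{total product} of the $\mathfrak{b}$'s and of the $\mathfrak{c}$'s, and these can be varied independently however the ideals are distributed among the indecomposables; and re-choosing orbit representatives changes nothing, since the obstruction is a statement about stabilizers, not representatives. Second, you should know that the paper's own proof does not close this gap either: it verifies that the pair $([\mathfrak{b}],[\mathfrak{c}])\in G(p)\backslash H(\Z[\zeta_p])\times G(p^2)\backslash H(\Z[\zeta_{p^2}])$ is an invariant of the isomorphism class of $\mathcal{E}$ --- which gives only the lower bound $|\mathfrak{g}(\mathcal{E},\mathfrak{RF})|\geq \left|G(p)\backslash H(\Z[\zeta_p])\right|\cdot\left|G(p^2)\backslash H(\Z[\zeta_{p^2}])\right|$ --- and then asserts equality with ``whence it follows.'' The injectivity you isolate (a \emph{single} $\beta$ realizing both translations) is exactly what is missing there too, so completing your proof requires an input about the stabilizers of ideal classes under the Galois actions that neither you nor the paper supplies.
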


\begin{proof}  The proof is similar to the previous result.  Let $\mathcal{E}' = M\rtimes_{\psi'}G$ be a semidirect product in the same profinite genus as $\mathcal{E}$, and denote by 
\begin{equation} \label{segundo caso}  M_{}'= \Z^{a} \oplus \displaystyle \bigoplus_{i=1}^{b} \, \mathfrak{b}''_i \oplus  \bigoplus_{j=1}^{c} \, \mathfrak{c}''_j  \oplus \bigoplus_{k=1}^{d} E(\mathfrak{b}'''_k) \oplus \bigoplus_{l=1}^{e} \, E(\mathfrak{c}'''_l)  \oplus \underbrace{M'_B \oplus M'_C \oplus M'_E \oplus M'_F}_{M'_Q}\end{equation} 
the corresponding $\Z G$-module.  By Proposition \ref{i ii profinite}, $M'$ also satisfies the conditions of the statement. By Proposition \ref{i ii}, $\mathcal{E} \cong \mathcal{E}'$ if, and only if $M_{} \cong (M_{}')^{\beta}$ for some $ \beta \in G(p^2).$ Denote by $\mathfrak{b}$ and
by $\mathfrak{c}$ the ideal classes of $R
$ and $S$ associated to $M$ and by $\mathfrak{b}'$ and $\mathfrak{c}'$ the ideal classes of $R$ and $S$ associated to $M'$. Therefore when $\mathcal{E}\iso \mathcal{E}'$, we have that $\beta(\mathfrak{b'})=\mathfrak{b}$ and $\beta(\mathfrak{c'})=\mathfrak{c}.$
 So $\mathfrak{b}'$ and $\mathfrak{b}$ 
  are equal in $G(p^2) \backslash \,H(\mathbb{Z}[\zeta_{p}])=G(p) \backslash \,H(\mathbb{Z}[\zeta_{p}])$ (the obvious homomorphism $G(p^2) \longrightarrow G(p)$ is surjective).
  Similarly, $\mathfrak{c}'$ and $\mathfrak{c}$ are equal in $G(p^2) \backslash \,H(\mathbb{Z}[\zeta_{p^2}]).$
 When at least two of conditions (1), (2), (3) are satisfied, the only isomorphism invariants needed to describe the class of $M_{}$ are as in items i) and ii) of Theorem \ref{invariant sums}. This shows that the genus depends only on the orbits of the actions of $G(p^2)$ on $ H(\mathbb{Z}[\zeta_{p^2}])$ and of $G(p^2)$  on $H(\mathbb{Z}[\zeta_{p}]),$ respectively, so by Theorem \ref{theorem sd products in same genus iff modules in same orbit} it follows that $$\left|\mathfrak{g}\left( M \rtimes_{\psi_{}} G\right)\right|= \left| G(p) \backslash \, H(\mathbb{Z}[\zeta_{p}]) \right| \cdot \left| G(p^2) \backslash \, H(\mathbb{Z}[\zeta_{p^2}]) \right|.$$ 
\end{proof} 

\begin{example}
    Consider $M$ and $\mathcal{E}$ in the conditions of either Proposition \ref{so Cs} with $p\in \{2,3,5\}$ or of Proposition \ref{Cs e Bs com absortion} with $p\in \{2, 3\}$.  Then \cite[Theorem 11.1]{wash} tells us that $h(\mathbb{Z}[\zeta_{p}])=h(\mathbb{Z}[\zeta_{p^2}]) = 1$, so by the propositions, $\mathcal{E}$ is profinitely rigid.
\end{example}



\begin{example}
    If $M$ and $\mathcal{E}$ are as in Proposition \ref{Cs e Bs com absortion} and $p\in\{7,11,19\}$, then \cite[Theorem 11.1]{wash} tells us that $h(\mathbb{Z}[\zeta_{p}])=1$, so that $\left| G(p^2) \backslash \, H(\mathbb{Z}[\zeta_{p}])  \right|=\left| G(p) \backslash \, H(\mathbb{Z}[\zeta_{p}])  \right|=1$.  It follows from the proposition that $|\mathfrak{g}(\mathcal{E})|= \left| G(p^2) \backslash \, H(\mathbb{Z}[\zeta_{p^2}])  \right|$.
\end{example}



\begin{example}
Consider $p=7$ and to simplify notation set $\zeta = \zeta_{49}$.  We will calculate that $$\left| G(49) \backslash \, H(\mathbb{Z}[\zeta ]) \right|=2,$$ so when $\mathcal{E}$ is as in Proposition \ref{Cs e Bs com absortion}, then $|\mathfrak{g}(\mathcal{E})|=2.$ By \cite[Theorem 2.5]{wash}, $G(49) \cong C_{42}$ and $H(\mathbb{Z}[\zeta ]) \cong C_{43}.$ The automorphism $\delta$ of $\mathbb{Q}[\zeta ]$ that sends $\zeta $ to $\zeta ^3$ generates the Galois group $G(49)$.  Let $[I]$ be a generator of $H(\mathbb{Z}[\zeta])$. The group $G(49)$ acts on $H(\mathbb{Z}[\zeta ])$ via the homomorphism $G(49) \longrightarrow\mbox{Aut}(H(\mathbb{Z}[\zeta ]))$ which sends $\delta$ to the automorphism of $H(\mathbb{Z}[\zeta ])$ sending $[I]$ to $[I]^3.$ Then $\delta^k([I]) = [I]^{3^k}$, so since $[I]^3$ generates $H(\mathbb{Z}[\zeta])$ and $|\delta| = 42$, we have $|G(49)\cdot [I]|=42$. It follows that $H(\mathbb{Z}[\zeta])$ has two orbits under the action of $G(49)$.  From the proof of Proposition \ref{Cs e Bs com absortion}, the two groups in the genus of $\mathcal{E}$ correspond to $M$ in the statement having $S$-ideal class either trivial, or non-trivial. 
\end{example}

In that follows, $U_t$ is as in Theorem \ref{invariant sums}.

\begin{prop} \label{sem absorcao e sem D}
Let $\mathcal{E}=M \rtimes_{\psi_{}} G$ be a faithful semidirect product 
 and $p \not\equiv \, 1 (\mbox{mod } \, 4 ).$  
Suppose \begin{equation} \label{p not 1 e so C} M_{}= \Z^{a} \oplus  \underbrace{M_B \oplus M_{C}  \oplus M_E \oplus M_F}_{M_Q}, \end{equation} with  $M_Q \neq \{0\}.$ Then $$|\mathfrak{g}\left(\mathcal{E}\right)|= \left| G(p) \backslash \, H(\mathbb{Z}[\zeta_{p}])  \right| \cdot \left| G(p^2) \backslash \, H(\mathbb{Z}[\zeta_{p^2}])  \right| \cdot \left| G(p^2) \backslash \, U_t  \right|.$$ 
\end{prop}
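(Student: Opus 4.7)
The strategy mirrors the proofs of Propositions \ref{so Cs} and \ref{Cs e Bs com absortion}, with the added wrinkle that the unit invariant $u_0$ now contributes a third factor. First I would take any $\mathcal{E}' \in \mathfrak{g}(\mathcal{E}, \mathfrak{RF})$: Proposition \ref{grupos no genero} realises it as $\mathcal{E}' \cong M \rtimes_{\psi'} G$; Proposition \ref{i ii profinite}, together with the hypothesis $p \not\equiv 1\,(\mbox{mod } 4)$ (which eliminates type $(D)$ both in $M$ and in $M'$), forces the induced $\Z G$-module $M'$ to have the same genus shape as $M$, i.e., $M' = \Z^a \oplus M'_B \oplus M'_C \oplus M'_E \oplus M'_F$ with matching tuples $(\beta,\gamma,\epsilon,\eta)$; and Proposition \ref{i ii} then translates ``$\mathcal{E} \cong \mathcal{E}'$'' into ``$M \cong (M')^{\beta}$ for some $\beta \in G(p^2)$''. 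Hence $|\mathfrak{g}(\mathcal{E}, \mathfrak{RF})|$ equals the number of $G(p^2)$-orbits on the set of isomorphism classes of $\Z G$-lattices sharing the genus of $M$.

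Next, since $M$ has no summand of types $\mathfrak{b}, E(\mathfrak{b}), \mathfrak{c}, E(\mathfrak{c})$ (only $\Z$'s together with the summands in $M_Q$), condition (iii) of Theorem \ref{invariant sums} applies, and since $p \not\equiv 1\,(\mbox{mod } 4)$ condition (iv) does not. The isomorphism class of $M'$ within the genus is therefore determined by the triple
\[
\bigl(\mathfrak{b}(M'),\ \mathfrak{c}(M'),\ \text{image of }u_0(M')\text{ in }U_t\bigr) \in H(\Z[\zeta_p]) \times H(\Z[\zeta_{p^2}]) \times U_t,
\]
and by varying the ideals and units appearing in the individual summands of $M'$ every such triple is realised. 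By Corollary \ref{twisted repr} and Lemma \ref{3}, the action of $\beta \in G(p^2)$ on this triple is the diagonal Galois action $(\mathfrak{b},\mathfrak{c},u) \mapsto (\beta(\mathfrak{b}),\beta(\mathfrak{c}),\overline{\beta}(u))$; because the action on $H(\Z[\zeta_p])$ factors through the surjection $G(p^2) \twoheadrightarrow G(p)$, the first factor contributes $|G(p) \backslash H(\Z[\zeta_p])|$ orbits.

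The main obstacle is the concluding orbit-count step: identifying the number of $G(p^2)$-orbits on the triple product with the product $|G(p) \backslash H(\Z[\zeta_p])| \cdot |G(p^2) \backslash H(\Z[\zeta_{p^2}])| \cdot |G(p^2) \backslash \widetilde{U}_t|$. Following the pattern used (tacitly) in the two previous propositions, I would fix orbit representatives $\{\mathfrak{b}_i\},\{\mathfrak{c}_j\},\{u_k\}$ in each factor and argue on two fronts. First, distinct triples of representatives cannot be $\beta$-equivalent, since any such $\beta$ would have to identify distinct orbit representatives in each coordinate separately. Conversely, every triple must be brought to some triple of representatives by a single $\beta$; here one exploits the independence of the three invariants across distinct summands of $M'$ (each of $\mathfrak{b}(M')$, $\mathfrak{c}(M')$, $u_0(M')$ is a product of contributions from different summands, adjustable one at a time) so that no rigidity arises from having to accommodate all three coordinates of the triple simultaneously. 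This second direction is the combinatorially delicate point on which the proof will hinge.
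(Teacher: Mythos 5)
Your overall strategy coincides with the paper's: reduce via Propositions \ref{grupos no genero}, \ref{i ii profinite} and \ref{i ii} to counting $G(p^2)$-orbits on the isomorphism classes of lattices in the genus of $M$, identify those classes with triples of invariants using Theorem \ref{invariant sums} (items (i)--(iii), with (iv) excluded because $p\not\equiv 1\ (\mbox{mod }4)$), and observe that $\beta$ acts diagonally, the action on the first coordinate factoring through $G(p)$. The paper's own proof is much terser --- it declares the first two factors ``similar to the preceding propositions'' and only checks that $u_0((M')^{\beta})=\overline{\beta}(u_0(M'))$, so that the $U_t$-invariants of lattices giving isomorphic semidirect products lie in a single $G(p^2)$-orbit --- but the logical skeleton is the same as yours.

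The place where you go beyond the paper is also where your argument has a genuine gap. The claim that ``every triple must be brought to some triple of representatives by a single $\beta$'' is false for a general diagonal action: if $C_2$ acts on $X=\{1,2\}$ and on $Y=\{1,2\}$ by swapping, each factor has one orbit but $X\times Y$ has two, and $(1,2)$ can never be moved to the representative pair $(1,1)$. The ``independence of the invariants across distinct summands'' that you invoke shows only that every triple in $H(\Z[\zeta_{p}])\times H(\Z[\zeta_{p^2}])\times U_t$ is \emph{realised} by some lattice in the genus; it does not let you move \emph{within} a diagonal orbit, since adjusting one invariant at a time changes the isomorphism class of the lattice and hence, in general, of the group. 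So the identification of the number of diagonal orbits on the triple product with the product of the three separate orbit counts is not a formal consequence of the coordinatewise statements, and your proposed two-front argument does not establish it. You have correctly isolated the delicate point --- the paper itself passes over it, asserting the multiplicativity without further justification both here and in Proposition \ref{Cs e Bs com absortion} --- but the resolution you sketch would not survive being written out.
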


\begin{proof} 
  The presence of the factor $\left| G(p) \backslash \, H(\mathbb{Z}[\zeta_{p}])  \right| \cdot \left| G(p^2) \backslash \, H(\mathbb{Z}[\zeta_{p^2}])  \right|$ is similar to the preceding propositions. We will discuss the presence of the third factor only. 
  By Proposition \ref{i ii}, $\mathcal{E} \cong \mathcal{E}'$ if and only if $M_{} \cong (M_{}')^{\beta}$ for some $ \beta \in G(p^2).$  By Lemma \ref{3} we have $u_0\left( \left( M_{}'\right)^{\beta} \right)= \overline\beta(u_0(M_{}'))$, but  $M_{} \cong (M_{}')^{\beta}$ and thus by Theorem \ref{invariant sums}, $u_0((M')^{\beta})$ and $u_0(M_{})$ have the same image in $U_t$. 
  Therefore when $\mathcal{E}\iso \mathcal{E}'$, then the images of $u_0\left(M_{}'\right)$ and $u_0(M_{})$ are in the same orbit of the action of $G(p^2)$ on $U_t$.  
\end{proof}

The cases that still need to be analyzed correspond to situations in which $p \equiv \, 1 (\mbox{mod } \, 4 ).$ 


\begin{defn} \label{sigma} 
Let $M$ be a $\Z G$-lattice.  Define $\Sigma_M$ to be $2$ if $p \equiv \, 1 (\mbox{mod } \, 4 )$ and $M$ has at least one summand of type $(C)$ or $(D)$ and no summand of the form $\Z$, $E(\mathfrak{b})$, $E(\mathfrak{c})$ or of type $(B)$ or $(F)$.  In all other cases define $\Sigma_M$ to be $1$.
\end{defn}

\begin{prop} \label{mais simples congruo a 1} Let $\mathcal{E}=M \rtimes_{\psi_{}} G$ be a faithful semidirect product and $p \equiv \, 1 (\mbox{mod } \, 4 ).$
   Suppose \begin{equation} \label{penultimo eq} M_{}= \Z^{a} \oplus  \underbrace{M_B \oplus M_{C,D}  \oplus M_E \oplus M_F}_{M_Q}, \end{equation} 
   with $M_Q\neq \{0\}$.
     Then $$|\mathfrak{g}\left(\mathcal{E}\right)|= \left| G(p) \backslash \, H(\mathbb{Z}[\zeta_{p}])  \right| \cdot \left| G(p^2) \backslash \, H(\mathbb{Z}[\zeta_{p^2}])  \right| \cdot \left| G(p^2) \backslash \, {U}_t  \right| \cdot \Sigma_M.$$ 
\end{prop}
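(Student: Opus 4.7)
The plan is to mimic the strategy of Proposition \ref{sem absorcao e sem D} and to isolate the new contribution coming from Theorem \ref{invariant sums}(iv) when $p\equiv 1\pmod 4$. First I would apply Propositions \ref{grupos no genero}, \ref{i ii} and \ref{i ii profinite} to identify $|\mathfrak{g}(\mathcal{E},\mathfrak{RF})|$ with the number of $G(p^2)$-orbits of isomorphism classes of $\Z G$-lattices $M'$ lying in the genus of $M$. By Theorem \ref{invariant sums}, such an isomorphism class is determined by (i) its genus, (ii) its $R$- and $S$-ideal classes, (iii) the image of $u_0(M')$ in $U_t$, and, when the hypothesis of item (iv) is met --- namely, when $M'$ has no summand of type $\Z$, $E(\kb)$, $E(\kc)$, $(B)$ or $(F)$ --- (iv) the quadratic character of $u_0(M')$ in $u(\F_p)$.

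Invariants (i)--(iii), after passage to $G(p^2)$-orbits and using the surjection $G(p^2)\twoheadrightarrow G(p)$, contribute the factor $|G(p)\backslash H(\Zzeta)|\cdot|G(p^2)\backslash H(\Zzetab)|\cdot|G(p^2)\backslash \widetilde U_t|$ by the same argument as in Proposition \ref{sem absorcao e sem D}. The extra factor $\Sigma_M$ comes from invariant (iv), and I would verify its value case by case following Definition \ref{sigma}. If $M$ has a summand of type $\Z$, $E(\kb)$, $E(\kc)$, $(B)$ or $(F)$, then item (iv) does not apply, no extra invariant arises, and $\Sigma_M=1$. If $a=0$, $M_B=M_F=0$ and $M_{C-D}=\{0\}$, so that $M_Q=M_E$, then item (iv) is formally available but every factor of $u_0(M')$ comes from some $u\in\widetilde U_m$ and is congruent to $1$ modulo $\lambda$; the quadratic character is therefore constantly $+1$ on the genus and again $\Sigma_M=1$. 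In the remaining case $a=0$, $M_B=M_F=0$, $M_{C-D}\neq\{0\}$, where $\Sigma_M=2$, it must be shown that both quadratic characters $\pm 1$ are actually realised. Since $G(p^2)$ acts trivially on $\F_p$, the quadratic character is automatically Galois-invariant, so any such doubling survives passage to orbits.

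The key realisation step in the $\Sigma_M=2$ case, which I expect to be the most delicate point, is handled as follows. By Lemma \ref{posto e completamentos} one may replace a summand of type $(C)$ by the corresponding summand of type $(D)$ without leaving the genus; this multiplies $u_0$ by the non-residue $n_0$, simultaneously flipping the quadratic character and shifting the image of $u_0$ in $U_t$ by $n_0$. To deduce that invariants (iii) and (iv) can be prescribed independently one must compensate the $U_t$-shift produced by the $(C)\leftrightarrow(D)$ swap using the units $u_i\in\widetilde U_{m_i}$ of the remaining indecomposable summands; these adjustments never alter the quadratic character, since each such $u_i$ is congruent to $1$ modulo $\lambda$. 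Because the definition of $t$ forces some summand of $M$ to carry units in $\widetilde U_{p-1-r_2(M)}=\widetilde U_t$, this compensation reduces to a routine surjectivity assertion, and the remainder of the argument is Galois-orbit bookkeeping identical to that of Proposition \ref{sem absorcao e sem D}.
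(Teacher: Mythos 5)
Your proposal is correct and follows essentially the same route as the paper: reduce to counting $G(p^2)$-orbits on the genus of $M$ via Propositions \ref{grupos no genero}, \ref{i ii} and \ref{i ii profinite}, obtain the triple product from invariants (i)--(iii) of Theorem \ref{invariant sums} as in Proposition \ref{sem absorcao e sem D}, and extract the factor $\Sigma_M$ from invariant (iv) by trading a type $(C)$ summand for the corresponding type $(D)$ summand within the genus. The paper phrases this last step as a splitting of the genus into two $G(p^2)$-invariant parts according to the parity of the exponent of $n_0$, which is exactly the doubling you get from the Galois-invariance and realizability of both quadratic characters; your explicit verification that invariants (iii) and (iv) can be prescribed independently is a point the paper leaves implicit.
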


\begin{proof} 
In most cases the proof is as in the previous propositions, because invariant (iv) from Theorem \ref{invariant sums} need not be considered.  A new case occurs when $M_{}=M_{C,D} \oplus M_E$ with $M_{C,D} \neq \{0\}$.
Let $\mathcal{E} '$ be a finitely generated residually finite group in the same genus as $\mathcal{E}$. 
By Proposition \ref{grupos no genero}, $ \mathcal{E} ' = M \rtimes_{\psi_{} '} G.$ Denote by $M_{}'$ the $\Z G$-lattice induced by $\psi_{}'.$   Let $(0,0,0,0,0; 0, \gamma, \delta, \epsilon, 0)$ and  $(0,0,0,0,0; 0, \gamma', \delta', \epsilon', 0)$  be the parameters defining the genus of $M,M'$ respectively (cf.\ (\ref{forma M}) - (\ref{forma MF})).
By Lemma \ref{beta fixa invariantes} and Proposition \ref{i ii profinite} we have that 
$$\epsilon = \epsilon' \mbox{ and } \gamma_r+\delta_r=\gamma'_r+\delta'_r \mbox{ for each } r \in \{1, \ldots, p-2\}.$$ 
We can write $$M=M_{C,D}\oplus M_E=(\Z\oplus E(\mathfrak{b}), \mathfrak{c}; 1\oplus \lambda^run_0^k)\oplus X_M,$$ where $\mathfrak{b}$ and $\mathfrak{c}$ are the $R$ and $S$-ideal classes associated to $M$ with $u=u_0(M), k\in \{0,1\}$ and $X_M$ is a direct sum of modules of type $(\Z\oplus E, S;  1\oplus\lambda^{r'})$ or $(R, S; \lambda^{r''})$.  As the parameters defining the genus of $M$ and $M'$ are equal except possibly for $\gamma$ and $\delta$, we can also write $$M' = M'_{C,D} \oplus M'_E=(\Z\oplus E(\mathfrak{b}'), \mathfrak{c}'; 1\oplus\lambda^{r}u'n_0^{k'})\oplus X_{M'}$$ where $\mathfrak{b}'$ and $\mathfrak{c}'$ are the $R$ and $S$-ideal classes associated to $M'$ and $u'=u_0(M'), k'\in \{0,1\}$ and $X_{M'}$ is a direct sum of modules of type $(\Z\oplus E, S; 1\oplus\lambda^{r'})$ or $(R, S;  \lambda^{r''})$. Of course, we have $X_M=X_{M'}$. Note that $(\Z\oplus E(\mathfrak{b}),\mathfrak{c}; 1\oplus\lambda^run_0^k)$ and $(\Z\oplus E(\mathfrak{b}'), \mathfrak{c}'; 1\oplus\lambda^{r}u'n_0^{k'})$ are in the same genus, but $k$ and $k'$ might not have the same parity. If the parities are different, then $\mathcal{E}\not\cong \mathcal{E'}$ by Proposition \ref{i ii}. Without loss of generality we can suppose that $k=0$.  Given another module $M''=(\Z\oplus E(\mathfrak{b}''), \mathfrak{c}''; 1\oplus\lambda^{r}u'' ) \oplus X_{M}$ in the genus of $M$, it follows that
$\mathcal{E}$ is isomorphic to $\mathcal{E}''=M'' \rtimes_{\psi''} G$ if only if there exists $\beta \in G(p^2)$ such that $M_{} \cong (M_{}'')^{\beta}.$ So the genus of $M$ splits into two parts, one in which the modules have the form $M'=(\Z\oplus E(\mathfrak{b}'),\mathfrak{c}'; 1\oplus\lambda^{r}u'n_0) \oplus X_{M}$ and another in which the modules have the form $M''=(\Z\oplus E(\mathfrak{b}''),\mathfrak{c}''; 1\oplus\lambda^{r}u'' ) \oplus X_{M}.$ 
Since the two parts are invariant under the action of $G(p^2)$, the size of the genus of $M$ is the sum of the size of the genuses of the two parts, so by Theorem \ref{theorem sd products in same genus iff modules in same orbit}, the size of the genus of $\mathcal{E}$ is equal to the sum of the number of distinct $G(p^2)$-orbits of each part. The decomposition of $M$ gives that it is sufficient to analyze the number of orbits of the action of $G(p^2)$ on the genus of modules  of type $(\Z\oplus E ,S; 1\oplus \lambda^{r})$. But this number is exactly $$\left| G(p) \backslash H(\mathbb{Z}[\zeta_{p}]) \right|\left| G(p^2) \backslash H(\mathbb{Z}[\zeta_{p^2}) \right|\left| G(p^2) \backslash {U}_t \right|,$$ 
so $$|\mathfrak{g}\left(\mathcal{E}\right)|= 2 \cdot \left| G(p) \backslash \, H(\mathbb{Z}[\zeta_{p}])  \right| \cdot \left| G(p^2) \backslash \, H(\mathbb{Z}[\zeta_{p^2}])  \right| \cdot \left| G(p^2) \backslash \, {U}_t  \right|,$$
where the factor $2$ appears because of the two parts of the genus of $M$.
\end{proof}


\begin{example}
We can use Proposition \ref{mais simples congruo a 1} to construct non-trivial examples of groups with small genus.  For instance, if $p=5$ and $M = \bigoplus_{i=1}^n(\Z\oplus E(\mathfrak{b}_i), \mathfrak{c}_i; 1\oplus \lambda^3)$ then the profinite genus of  $\mathcal{E} = M\rtimes G$ is 2, while if $ M = \bigoplus_{i=1}^n\left( E(\mathfrak{b}_{i}), \mathfrak{c}_{i}; \lambda^4\right)$, then the profinite genus of  $\mathcal{E} = M\rtimes G$ is $1$. This occurs because in the first case we have  $\Sigma_M =
2$ and in the second case $\Sigma_M = 1$. Also, in both cases  $U_t = U_1 = 1$ (see \cite[p.\ 740] {curtis}), and \cite[Theorem 11.1]{wash} tells us that $h(\mathbb{Z}[\zeta_{5}])=h(\mathbb{Z}[\zeta_{25}]) = 1$.
\end{example}


\begin{prop} \label{com b e c mas sem valer iv}
  Let $\mathcal{E}=M \rtimes_{\psi_{}} G$ be a faithful semidirect product and $p \equiv \, 1 (\mbox{mod } \, 4 ).$
  Suppose \begin{equation} M_{}= \displaystyle \bigoplus_{i=1}^{b} \, \mathfrak{b}_i \oplus  \bigoplus_{j=1}^{c} \, \mathfrak{c}_j \oplus M_{C,D} \oplus M_E\end{equation} is such that (at least) one of the following conditions is satisfied: 
  \begin{enumerate}
      \item $b \geqslant 1$ and $c \geqslant 1$,
      \item $b+c \geqslant 1$ and $M_{C,D} \oplus M_E \neq \{0\}.$
  \end{enumerate}
  Then $$|\mathfrak{g}\left(\mathcal{E}\right)|= \left| G(p) \backslash \, H(\mathbb{Z}[\zeta_{p}])  \right| \cdot \left| G(p^2) \backslash \, H(\mathbb{Z}[\zeta_{p^2}])\right| \cdot \Sigma_M.$$ 
\end{prop}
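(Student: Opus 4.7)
The plan is to follow the pattern established by Propositions \ref{so Cs}--\ref{mais simples congruo a 1}. Given $\mathcal{E}' \in \mathfrak{g}(\mathcal{E}, \mathfrak{RF})$, Proposition \ref{grupos no genero} writes $\mathcal{E}' = M \rtimes_{\psi'} G$; let $M'$ denote the induced $\Z G$-lattice. Proposition \ref{i ii profinite} forces $M'$ to have the same decomposition data as $M$ (with the components of $\gamma$ and $\delta$ free to change as long as their coordinate-wise sums are preserved), while Proposition \ref{i ii} says $\mathcal{E} \cong \mathcal{E}'$ if and only if $M \cong (M')^\beta$ for some $\beta \in G(p^2)$. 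Hence $|\mathfrak{g}(\mathcal{E}, \mathfrak{RF})|$ equals the number of $G(p^2)$-orbits on the set of isomorphism classes of $\Z G$-lattices in the genus of $M$.

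Next I would apply Theorem \ref{invariant sums} to identify the relevant isomorphism invariants within the genus. By hypothesis either $b \geq 1$ or $c \geq 1$, so $M$ has a summand of type $\mathfrak{b}$ or $\mathfrak{c}$ and invariant (iii) does not apply. On the other hand, $M$ has no summand of the forbidden types $\Z, E(\mathfrak{b}), E(\mathfrak{c})$, (B) or (F), and $p \equiv 1 \pmod 4$, so invariant (iv) \emph{does} apply. Thus the isomorphism class of a lattice in the genus of $M$ is determined by its $R$-ideal class, its $S$-ideal class, and the quadratic character of $u_0(M)$ in $u(\F_p)$. As in the previous propositions, $G(p^2)$ acts on the $R$-ideal class through its quotient $G(p)$, contributing a factor $|G(p) \backslash H(\Z[\zeta_p])|$, and on the $S$-ideal class via its natural action, contributing $|G(p^2) \backslash H(\Z[\zeta_{p^2}])|$. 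Since $\overline{\beta}$ fixes the constants $\F_p$, $G(p^2)$ preserves the quadratic character, so the remaining factor is the number of distinct quadratic characters realized within the genus of $M$.

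The main obstacle, and the content of the $\Sigma_M$ factor, lies in this last count. The key observation is that the representatives $u \in \widetilde{U}_{p-1-r}$ are chosen with $u \equiv 1 \pmod \lambda$, so each such $u$ projects to $1 \in u(\F_p)$. If $M_{C-D} = 0$, then every factor of $u_0(M)$ comes from an $M_E$ summand of the form $(\mathfrak{b}, \mathfrak{c}; \lambda^r u)$; hence $u_0(M)$ has trivial image in $u(\F_p)$, only the character $+1$ is realized, and $\Sigma_M = 1$. If $M_{C-D} \neq 0$, then by Proposition \ref{i ii profinite} the parity of the number of type (D) summands is unconstrained within the genus (since $\delta_r$ ranges between $0$ and $\gamma_r + \delta_r$); since each type (D) summand multiplies $u_0(M)$ by the non-residue $n_0$, both quadratic characters $\pm 1$ occur, so $\Sigma_M = 2$. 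Multiplying the three factors yields the claimed formula.
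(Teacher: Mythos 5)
Your proof is correct and takes essentially the same approach as the paper, whose own proof of this proposition is a one-line deferral to the preceding propositions plus the observation that invariant (iii) of Theorem \ref{invariant sums} is not needed because $b+c\geq 1$ forces a summand of type $\mathfrak{b}$ or $\mathfrak{c}$. Your expanded justification of the $\Sigma_M$ factor --- the representatives $u$ reduce to $1$ in $u(\F_p)$, so the quadratic character is the parity of the number of type-(D) summands, which is unconstrained within the genus when $M_{C-D}\neq\{0\}$ --- is exactly the argument the paper carries out in Proposition \ref{mais simples congruo a 1} and implicitly reuses here.
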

\begin{proof}

The proof is similar to the previous propositions.  It is worth noting that in this case there is always a direct summand of $M$ of type $\mathfrak{b}$ or $\mathfrak{c}$, and thus we will not use invariant (iii) of Theorem \ref{invariant sums} when counting non-isomorphic lattices. 
\end{proof}

\begin{example}
If $M$ and $\mathcal{E}$ are as in Proposition \ref{com b e c mas sem valer iv} and $p=5,$  then $|\mathfrak{g}(\mathcal{E})|=1$ if $M_{C,D}=\{0\},$ and $|\mathfrak{g}(\mathcal{E})|=2$ if $M_{C,D} \neq \{0\}.$  
\end{example}

There is one final case.

\begin{prop} \label{ultimao}
Let $\mathcal{E}=M \rtimes_{\psi_{}} G$ be a faithful semidirect product and $p \equiv \, 1 (\mbox{mod } \, 4 )$. Suppose \begin{equation} M_{}= \Z^{a} \oplus \displaystyle \bigoplus_{i=1}^{b} \, \mathfrak{b}_i \oplus  \bigoplus_{j=1}^{c} \, \mathfrak{c}_j  \oplus \bigoplus_{k=1}^{d} E(\mathfrak{b}'_k) \oplus \bigoplus_{l=1}^{e} \, E(\mathfrak{c}'_l)  \oplus \underbrace{M_B \oplus M_{C,D} \oplus M_E \oplus M_F}_{M_Q} \end{equation} 
is such that (at least) one of the following conditions is satisfied: 
\begin{enumerate}
    \item $d\geqslant 1 $ and $e \geqslant 1,$
    \item $b+c \geqslant 1$  and $(a+d+e \geqslant 1$ or $M_B \oplus M_F \neq \{0\}).$
\end{enumerate}
Then $$|\mathfrak{g}\left(\mathcal{E}\right)|= \left| G(p) \backslash \, H(\mathbb{Z}[\zeta_{p}])  \right| \cdot \left| G(p^2) \backslash \, H(\mathbb{Z}[\zeta_{p^2}])\right|.$$ 
\end{prop}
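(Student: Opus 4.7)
The plan is to follow the template of Propositions \ref{so Cs}--\ref{com b e c mas sem valer iv}: reduce the genus count to an orbit count for the $G(p^2)$-action on pairs of ideal classes $(\mathfrak{b},\mathfrak{c}) \in H(\mathbb{Z}[\zeta_p]) \times H(\mathbb{Z}[\zeta_{p^2}])$, after verifying that under hypothesis (i) or (ii), invariants (iii) and (iv) of Theorem \ref{invariant sums} are superfluous.

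First I would fix any $\mathcal{E}'$ with $\widehat{\mathcal{E}'} \cong \widehat{\mathcal{E}}$. By Proposition \ref{grupos no genero}, $\mathcal{E}' \cong M \rtimes_{\psi'} G$ for some faithful action, giving a $\Z G$-lattice $M'$. Proposition \ref{i ii profinite} then yields that $M'$ shares with $M$ the parameters $(a,b,c,d,e;\beta,\epsilon,\eta)$ and $\gamma_r + \delta_r = \gamma'_r + \delta'_r$ for each $r \in \{1,\ldots,p-2\}$; thus $M'$ has the form (\ref{forma M}) with the same data as $M$ except for a possible redistribution between type-$(C)$ and type-$(D)$ summands, subject to fixed totals.

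Second I would carry out the case analysis showing that invariants (iii) and (iv) of Theorem \ref{invariant sums} play no role.  Invariant (iii) applies only when $M$ has no summand among $\mathfrak{b}, E(\mathfrak{b}), \mathfrak{c}, E(\mathfrak{c})$; hypothesis (i) gives $d \geq 1$ and $e \geq 1$, and hypothesis (ii) gives $b+c \geq 1$, so in both situations one such summand is present.  Invariant (iv) applies only when $M$ has no summand of type $\Z$, $E(\mathfrak{b})$, $E(\mathfrak{c})$, $(B)$, or $(F)$; hypothesis (i) again directly supplies $E(\mathfrak{b})$ and $E(\mathfrak{c})$, while each of the four alternatives in hypothesis (ii), namely $a \geq 1$, $d \geq 1$, $e \geq 1$, or $M_B \oplus M_F \neq \{0\}$, contributes a summand of one of the types forbidden by (iv).  Consequently only invariants (i) and (ii) are needed to pin down the isomorphism class of $M$.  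In particular, type-$(C)$ and type-$(D)$ summands become interchangeable (the factor $n_0$ being detected only by the absent invariant (iv)), so the relation $\gamma + \delta = \gamma' + \delta'$ suffices to make $M$ and $M'$ agree as $\Z G$-lattices once their $R$- and $S$-ideal classes match.

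Third, by Proposition \ref{i ii}, $\mathcal{E} \cong \mathcal{E}'$ iff $M \cong (M')^\beta$ for some $\beta \in G(p^2)$, which by the construction of $M^\beta$ becomes $\beta(\mathfrak{b}_{M'}) = \mathfrak{b}_M$ and $\beta(\mathfrak{c}_{M'}) = \mathfrak{c}_M$.  Exactly as in the proof of Proposition \ref{Cs e Bs com absortion}, the $G(p^2)$-action on $H(\mathbb{Z}[\zeta_p])$ factors through the surjection $G(p^2) \twoheadrightarrow G(p)$, and the genus size becomes $|G(p) \backslash H(\mathbb{Z}[\zeta_p])| \cdot |G(p^2) \backslash H(\mathbb{Z}[\zeta_{p^2}])|$.

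The most delicate step is the case analysis showing that invariant (iv) is absorbed in subcase (ii), since four alternatives must each be individually checked to guarantee the presence of a summand forbidden by (iv); and the freedom conveyed by $\gamma + \delta = \gamma' + \delta'$ to exchange type-$(C)$ and type-$(D)$ summands must be justified as harmless precisely because the quadratic character of $u_0(M)$ is no longer an isomorphism invariant in this setting.
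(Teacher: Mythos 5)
Your overall strategy coincides with the paper's (whose proof of this proposition is only the remark that it is ``similar to the previous proposition'' and that the hypotheses force invariants (iii) and (iv) of Theorem \ref{invariant sums} out of play); your treatment of invariant (iii), of the interchangeability of type-$(C)$ and type-$(D)$ summands, and of the final orbit count on $H(\Z[\zeta_{p}])\times H(\Z[\zeta_{p^2}])$ is in order. The gap is in the step where you dispose of invariant (iv). You assert that hypothesis (i) ``directly supplies $E(\mathfrak{b})$ and $E(\mathfrak{c})$'' and that the alternatives $d\geq 1$ and $e\geq 1$ in hypothesis (ii) each contribute a summand of a type forbidden by (iv). With the indexing of (\ref{forma M}) --- reproduced verbatim in the statement --- the number of $E(\mathfrak{b})$-summands is $d-b$ and the number of $E(\mathfrak{c})$-summands is $e-c$, so $d\geq 1$ and $e\geq 1$ are perfectly compatible with there being no $E$-summand at all (take $b=d$ and $c=e$). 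Concretely, $M=\mathfrak{b}\oplus\mathfrak{c}\oplus M_{C-D}$ with $M_{C-D}\neq\{0\}$ satisfies both (i) and (ii) as literally written, yet has no summand of type $\Z$, $E(\mathfrak{b})$, $E(\mathfrak{c})$, $(B)$ or $(F)$; hence invariant (iv) is in force, $\Sigma_M=2$, and by Proposition \ref{com b e c mas sem valer iv} the genus is twice the value you (and the proposition) claim.

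In fairness, the defect originates in the statement itself: condition (ii) only has content if $d$ and $e$ there denote the numbers of $E(\mathfrak{b})$- and $E(\mathfrak{c})$-summands, since under the literal reading $b+c\geq 1$ already forces $a+d+e\geq 1$ and the parenthetical clause is vacuous. The intended hypotheses are therefore $d-b\geq 1$ and $e-c\geq 1$ in (i), and $a+(d-b)+(e-c)\geq 1$ or $M_B\oplus M_F\neq\{0\}$ in (ii); under that reading your case analysis for invariant (iv) goes through and the rest of your argument is sound. But a complete proof must either adopt that reading explicitly or show that the literal hypotheses suffice --- and they do not. As written, your claim that invariant (iv) plays no role is unjustified, and in the boundary cases above it is false.
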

\begin{proof}
  The proof is similar to the previous propositions. The only difference is that  the conditions ensure that invariants (iii) and (iv) of Theorem \ref{invariant sums} need not be considered to distinguish isomorphism classes of $M$.
\end{proof}

\begin{example}
If $M$ and $\mathcal{E}$ are as in Proposition \ref{ultimao} and $p=5,$  then $\mathcal{E}$ is profinitely rigid. 
\end{example}

\begin{proof}[Proof of Theorem 1] 

    The result follows from Propositions   \ref{so Cs}, \ref{Cs e Bs com absortion},  \ref{sem absorcao e sem D}, \ref{mais simples congruo a 1}, \ref{com b e c mas sem valer iv} and \ref{ultimao}.
\end{proof}


\bibliographystyle{plain}
\bibliography{ref}
  
\end{document}